\newtheorem{Thm}{Theorem}[section]
\newtheorem{Conj}[Thm]{Conjecture}
\newtheorem{Prop}[Thm]{Proposition}
\newtheorem{Def}[Thm]{Definition}
\newtheorem{Def/Thm}[Thm]{Definition/Theorem}
\newtheorem{Cor}[Thm]{Corollary}
\newtheorem{Lemma}[Thm]{Lemma}
\theoremstyle{remark}
\newtheorem{Rmk}[Thm]{Remark}
\numberwithin{equation}{subsection}
\newcommand{\ti }{\times}
\newcommand{\ot }{\otimes}
\newcommand{\ra }{\rightarrow}
\newcommand{\lra }{\longrightarrow}
\newcommand{\Hom }{{\mathrm{Hom}}}
\newcommand{\Spec}{{\mathrm{Spec}}}
\newcommand{\Pic}{{\mathrm{Pic}}}
\newcommand{\Sym}{{\mathrm{Sym}}}
\newcommand{\rank }{{\mathrm{rank}}}
\newcommand{\cO}{{\mathcal{O}}}
\newcommand{\cF}{{\mathcal{F}}}
\newcommand{\cY}{{\mathcal{Y}}}
\newcommand{\cX}{{\mathcal{X}}}
\newcommand{\fC}{{\mathfrak{C}}}
\newcommand{\G}{{\bf G}}
\newcommand{\NN}{{\mathbb N}}
\newcommand{\PP }{{\mathbb P}}
\newcommand{\QQ }{{\mathbb Q}}
\newcommand{\CC }{{\mathbb C}}
\newcommand{\ZZ }{{\mathbb Z}}
\newcommand{\one }{{\mathbbm 1}}
\newcommand{\ke }{{\varepsilon }}
\newcommand{\kG }{{\Gamma}}
\newcommand{\kl }{{\lambda}}
\newcommand{\Up}{\Upsilon}
\newcommand{\fBun}{\mathfrak{B}un_{\bf {G}}}
\newcommand{\vir}{\mathrm{vir}}
\newcommand{\X}{\mathfrak{X}}
\newcommand{\WmodtG}{W/\!\!/_{\!\theta}\G}
\newcommand{\BunG}{\mathfrak{B}un_\G}
\newcommand{\Bund}{\mathfrak{B}un_{\G, d}^{g,k}}
\newcommand{\fM}{\mathfrak{M}}
\newcommand{\lan}{\langle}
\newcommand{\ran}{\rangle}
\newcommand{\id}{\mathrm{id}}
\newcommand{\pr}{\mathrm{pr}}
\newcommand{\rk}{\mathrm{rk}\,}
\newcommand{\tP}{{\tt P}}
\newcommand{\tR}{{\tt R}}
\newcommand{\tF}{{\tt F}}
\begin{document}

\title[Quasimap Wall-crossings and Mirror Symmetry]{Quasimap Wall-crossings and Mirror Symmetry}
\begin{abstract} 
We state a wall-crossing formula for the virtual classes of $\ke$-stable quasimaps to GIT quotients and prove it for complete intersections in projective
space, with no positivity restrictions on their first Chern class. As a consequence, the wall-crossing formula relating the genus $g$ descendant Gromov-Witten potential and the genus $g$ $\ke$-quasimap
descendant potential is established. For the quintic threefold, our results may be interpreted as giving a
rigorous and geometric interpretation of the holomorphic limit of the BCOV $B$-model partition function
of the mirror family.
\end{abstract}

\author{Ionu\c t Ciocan-Fontanine}
\noindent\address{School of Mathematics, University of Minnesota, 206 Church St. SE,
Minneapolis MN, 55455, and\hfill
\newline \indent School of Mathematics, Korea Institute for Advanced Study,
85 Hoegiro, Dongdaemun-gu, Seoul, 02455, Korea}
\email{ciocan@math.umn.edu}

\author{Bumsig Kim}
\address{School of Mathematics, Korea Institute for Advanced Study,
85 Hoegiro, Dongdaemun-gu, Seoul, 02455, Korea}
\email{bumsig@kias.re.kr}

\date{\today}

\maketitle



\newcommand{\WtG}{W/\!\!/_{\!\theta}\G}
\newcommand{\uX}{\underline{X}}
\newcommand{\uC}{\underline{C}}
\newcommand{\ux}{\underline{x}}
\newcommand{\uL}{\underline{L}}
\newcommand{\tw}{\mathrm{tw}}
\newcommand{\fB}{\mathfrak{B}}
\newcommand{\fQmap}{\mathfrak{Qmap}}
\newcommand{\un}{\mathrm{un}}
\newcommand{\fpQmap}{\mathfrak{pQmap}}
\newcommand{\reg}{\mathrm{reg}}
\newcommand{\Qke}{Q_{g,k}^{\ke}(X, \beta )}
\newcommand{\Qked}{Q_{g,k}^{\ke}(X, d )}
\newcommand{\Eff}{\mathrm{Eff}}
\newcommand{\Kke}{\mathcal{K}_{g, k}^{\ke}((\X, X), \beta )}
\newcommand{\AAA}{\mathbb{A}}

\newcommand{\bx}{\mathbf{x}}
\newcommand{\Kked}{\mathcal{K}_{g, k}(X, d)}
\newcommand{\QGke}{QG^{\ke}_{0, k, \beta}(X)}
\newcommand{\bIX}{\bar{I}_{\mu}X}

\newcommand{\fU}{\mathfrak{U}}

\newcommand{\QV}{\fM_{g, k}([V/G], d)}
\newcommand{\QkeV}{Q^{\ke}_{g, k}(([V/G], [V^{ss}/G]), d)}
\newcommand{\bL}{\mathbb{L}}

\newcommand{\QMe}{Q^\ke_{g, k | A}(X, \beta _0 )}
\newcommand{\QM}{Q_{g, k | A}(X, \beta _0 )}
\newcommand{\MM}{\overline{M}_{g, k}(X, \beta )}

\newcommand{\Bun}{\fB un_{\G}^{g, k}}
\newcommand{\fV}{\mathfrak{V}}

\newcommand{\Qtw}{Q^{\ke, \tw}_{g, k}(\PP (V), \beta)}

\newcommand{\MMm}{\overline{M}_{g, k}^{\tw}(\PP(V), \beta)}
\newcommand{\QQm}{Q^{\ke, \tw}_{g, k}(\PP(V), \beta)}

\newcommand{\MMA}{\overline{M}_{g, k+A}^{\tw}(\PP(V), \beta _0)}
\newcommand{\QQA}{Q^{\ke, \tw}_{g, k+A}(\PP(V), \beta_0)}
\newcommand{\MMa}{\overline{M}_{0, a}(\mathds{V}, \beta _a)}

\newcommand{\MMaa}{\overline{M}_{0, 1+a}(\mathds{V}, \beta _a)}

\newcommand{\QQqm}{Q_{g, k}^{\tw}(\PP(V), \beta)}
\newcommand{\QQqA}{Q_{g, k|A}^{\tw} (\PP(V), \beta_0)}

\newcommand{\oG}{\overline{\Gamma}}
\newcommand{\cW}{\mathcal{W}}

\newcommand{\sA}{\mathscr{A}}
\newcommand{\sR}{\mathscr{R}}
\newcommand{\sQ}{\mathscr{Q}}
\newcommand{\sP}{\mathscr{P}}
 \newcommand{\sL}{\mathscr{L}}
 \newcommand{\sD}{\mathscr{D}}
 \newcommand{\sW}{\mathscr{W}}
 \newcommand{\sI}{\mathscr{I}}
 \newcommand{\sV}{\mathscr{V}}
 \newcommand{\ovG}{\overline{\Gamma}}

\newcommand{\Gr}{\mathbf{Gr}}
\newcommand{\bC}{\mathbf{C}}
\newcommand{\sM}{\mathscr{M}}
\newcommand{\mpr}{\mathrm{Pr}}

\newcommand{\PsA}{\PP ^{sep}_{j_A}}
\newcommand{\hatU}{\hat{\Upsilon}}

\newcommand{\hG}{\hat{\Gamma}}

\tableofcontents

\section{Introduction}

\subsection{Overview}
Let $W$ be a complex affine variety acted upon by a reductive algebraic group $\G$.
Fix a character $\theta$ of $\G$ for which the induced $\G$-action on
the $\theta$-semistable locus $W^{ss}$ is free. For the quasiprojective target $\WmodtG$ and a rational number $\ke >0$, or for $\ke =0+$,
the notion of $\ke$-stable quasimaps to $\WmodtG$ was introduced in \cite{CKM}, inspired by \cite{MOP, MM, CK}. 
They are in fact suitable maps from curves to the stack quotient 
$[W/\G]$. 
The Deligne-Mumford moduli stack $Q^{\ke}_{g, k}(\WmodtG, \beta )$ of $\ke$-stable quasimaps of type $(g, k, \beta)$
is proper over $\CC$ if $\WmodtG$ is projective. Here
 $g$, $k$, and $\beta$ are respectively the genus of the domain curve, the number of markings, 
and the numerical class  $\beta \in \Hom_\ZZ (\Pic ([W/\G], \ZZ )$) of the quasimaps.
If $W$ has at worst lci singularities and $W^{ss}$ is smooth (as always assumed in this paper), the moduli stacks carry canonical virtual fundamental classes. 
There are evaluation maps $ev_j$ to $\WmodtG$, as well as cotangent psi-classes $\psi _j$ at 
the $j$-th marking.
Hence, we may define
descendant $\ke$-quasimap invariants
\begin{equation}\label{invariants} 
\lan \gamma _1\psi ^{a_1}_1,\dots, \gamma _k \psi _k^{a_k} \ran _{g, k, \beta }^{\ke} =
\int_{[Q^{\ke}_{g, k}(\WmodtG, \beta ) ]^{\mathrm{vir}}} \prod_{j=1}^k\psi^{a_j}_j ev_j^*\gamma_j\end{equation}
for $\gamma _i \in A^*(\WmodtG )_{\QQ}$ and $a_i \in \ZZ _{\ge 0}$. Here and for the rest of the paper, the Chow cohomology $A^*(Y)_\QQ$ of a Deligne-Mumford stack $Y$ is the algebra $A^*(Y\stackrel{\id}{\ra} Y)_\QQ $ of bivariant classes, see \cite[\S17.3]{Fu} and \cite[\S5]{Vistoli}.

There is a wall-and-chamber structure on the space $\QQ_{>0}$ of stability parameters. Assuming for 
simplicity $(g,k)\neq(0,0)$, the walls are at $\ke=1/n$ with $n\in\NN$ and the moduli spaces stay constant in each chamber $(\frac{1}{n+1}, \frac{1}{n}]$. For $ \ke\in (1,\infty)$, they
parametrize exactly stable maps to $\WmodtG$. 
A conjectural wall-crossing formula for the invariants of {\it semi-positive} targets was stated in the paper \cite{CKg}, and was proved for semi-positive (quasiprojective) toric
quotients by localization techniques. 
In this paper we propose a geometric wall-crossing formula {\it at the level of virtual classes} and {\it without} any positivity restrictions (which, as we show,
implies the above mentioned semi-positive numerical wall-crossing, see Corollary \ref{numerical2}). The main result of the paper is a proof of the virtual class wall-crossing formula for complete intersections in
projective spaces.

The wall-crossing formula has important applications to Mirror Symmetry for Calabi-Yau threefolds at higher genus. This is explained in \S\ref{mirror symm}, the main point being that, assuming the Mirror Conjecture, the genus $g$ partition function of quasimap theory for the $\ke=0+$ stability of a Calabi-Yau threefold is precisely {\it equal} to (the holomorphic limit of) the $B$-model partition function of the mirror Calabi-Yau family, introduced in string theory by Bershadsky, Cecotti, Ooguri, and Vafa.

\subsection{Geometric wall-crossing} To state the wall-crossing formula, we recall some facts from quasimap theory and fix some notation.

The  monoid ${\mathrm {Eff}}(W,\G,\theta)$ of $\theta$-effective numerical classes is the submonoid of the additive group
$\Hom_\ZZ (\Pic ([W/\G], \ZZ )$ consisting of classes represented by $\theta$ quasimaps (possibly with disconnected domain curves). The Novikov ring of the theory is
$$\QQ[[q]]:=\left\{ \sum_{{\mathrm {Eff}}(W,\G,\theta)}a_\beta q^\beta \; \mid \; a_\beta\in \QQ\right\},$$
the $q$-adic completion of the semigroup ring $\QQ[{\mathrm {Eff}}(W,\G,\theta)]$.

The GIT set-up gives (see \cite[\S3.1]{CKg0} for details) a natural morphism $i:[W/\G] \ra [\CC ^{m+1}/\CC ^*]$ 
for some $m\in\ZZ_+$,
inducing a closed immersion $i:\WmodtG \hookrightarrow \PP ^m$ and also a morphism (denoted by the same letter)
\[ i : Q^{\ke}_{g, k } (\WmodtG , \beta  ) \ra Q^{\ke}_{g, k } (\PP ^m , d(\beta ) ), \]
where $d(\beta) := i_*(\beta ) \in \Hom (\Pic ([\CC ^{m+1}/\CC ^*]), \ZZ)\cong\ZZ$.

Fix a positive rational number $\ke_0$ such that $1/\ke_0 $ is an integer
and let $\ke _+>\ke_0\geq \ke _-$ be rational numbers in the two adjacent stability chambers separated by the wall $\ke_0$.
There is a morphism
\[ c : Q^{\ke_+}_{g, k} (\PP ^m, d(\beta ) ) \ra Q^{\ke _{-}}_{g, k } (\PP ^m , d(\beta) ) \]
which contracts rational tails of degree $1/\ke_0$, see \cite{Toda}.

Let $A$ denote a finite index set of cardinality $1,2,3,\dots $ Consider splittings 
$\beta=\beta_0+\sum_{a\in A}\beta_{a}$ into $\theta$-effective numerical classes such that 
$d(\beta _a) = 1/ \ke _0$ for all $a\in A$.
There is a natural morphism 
\[ b_{A} : Q^{\ke_-}_{g, k + A} (\PP ^m , d(\beta _0) ) \ra Q^{\ke_-}_{g, k } (\PP ^m, d(\beta ) ) \]
which trades the markings in $A$ for base points of length $1/\ke _0$ (\cite[\S3.2]{CKg0}).

Finally, recall from \cite[\S7]{CKM} and \cite[\S5]{CKg0} that for every triple $(W,G,\theta)$, with associated quotient $X=\WmodtG$, there is a corresponding {\it small $I$-function}, denoted
$I_{sm}(q,z)$. The precise definition we will use in this paper is Definition 5.1.1 in \cite{CKg0}, specialized at $\ke=0+$ and ${\bf t}=0$.

The small $I$-function lies in a certain completion
$A^*(X)_\QQ[[q]]\{\!\{1/z, z\}\!\}$ of Laurent series in $1/z$. (Here $z$ may be viewed as a formal variable of degree one, though it is more natural to interpret $z$ as the generator of the $\CC^*$-equivariant cohomology 
$A^*_{\CC^*}(\Spec(\CC))$.) It can be explicitly calculated for many targets.
For abelian quotients, that is, for toric varieties and for complete intersections in them, the small $I$-function is precisely the cohomology-valued hypergeometric 
series introduced by
Givental \cite{Giv} (up to exponential factors). Closed formulas for $I_{sm}$ in many examples with nonabelian 
$\G$ (e.g., complete intersections in flag varieties, but many others as well) can also be written down using the so-called {\it abelian/nonabelian correspondence}, see \cite{BCK1, BCK2, CKS, CKP}.

Consider the expansion 
\begin{equation*}
I_{sm}(q,z)= O(1/z^2)+ \frac{I_1(q)}{z}+I_0(q)+I_{-1}(q)z+I_{-2}(q)z^2+\dots
\end{equation*}
and set
\begin{equation*}
[zI_{sm}(q,z)-z]_+:=I_1(q)+(I_0(q)-1)z+I_{-1}(q)z^2+\dots 
\end{equation*}
In general $[zI_{sm}(q,z)-z]_+$ is a power series in 
$(q,z)$, but 
each $q$-coefficient is a polynomial in $z$. For each 
$0\neq\beta\in {\mathrm {Eff}}(W,\G,\theta)$, let
\[ \mu _{\beta}(z) \in A^* (X )_{\QQ}[z] \]
denote the coefficient of $q^\beta$ in $[zI_{sm}(q,z)-z]_+$. By easy dimension counting, 
$\mu_\beta(z)$ is homogeneous of degree
$1+ \beta (K_{[W/\G]})$.  Here $z$ has degree one, the Chow cohomology classes are given their usual degrees, and $K_{[W/\G]}=-\det(T_W)\in\Pic^{\G}(W) =\Pic([W/\G])$ is the canonical line bundle of the quotient stack.

We are now ready to state the wall-crossing for virtual classes.

\begin{Conj}\label{MainConj}
There is an equality
\begin{equation}\label{geometric}
\begin{split}& i_*
[Q^{\ke_{-}} _{g, k} (X, \beta ) ] ^{\vir}-c_*i_*[Q^{\ke_{+}}_{g, k}(X, \beta ) ]^{\vir}  =\\ 
&\sum _{|A|}  \sum _{\beta =\beta _0 + \sum _{a\in A}\beta _a} \frac{1}{|A|!} b_{A *} (c_A)_* i_*
 \left(\prod _{a\in A} ev_a^*\mu _{ \beta_a}(z)|_{z=-\psi_a}  \cap [Q^{\ke_{+}} _{g, k+A} (X, \beta _0) ] ^{\vir}\right)    \end{split}
 \end{equation}
in the Chow group $A_*(Q^{\ke _{-}}_{g, k} ( \PP ^m, d(\beta) ))_{\QQ}$.

More generally, let $\delta_1,\dots , \delta_k\in A^*(X)_\QQ$ be arbitrary homogeneous cohomology classes. Then 
\begin{equation}\label{geometric2}
\begin{split}& i_*\left(
\prod_{j=1}^k ev_j^*\delta_j\cap [Q^{\ke_{-}} _{g, k} (X, \beta ) ] ^{\vir}\right)-c_*i_*\left(\prod_{j=1}^k ev_j^*\delta_j\cap[Q^{\ke_{+}}_{g, k}(X, \beta ) ]^{\vir} \right) =\\ 
&\sum _{|A|}  \sum _{\beta =\beta _0 + \sum _{a\in A}\beta _a} \frac{1}{|A|!} b_{A *} (c_A)_* i_*
 \left(\prod_{j=1}^k ev_j^*\delta_j\prod _{a\in A} ev_a^*\mu _{ \beta_a}(z)|_{z=-\psi_a}  \cap [Q^{\ke_{+}} _{g, k+A} (X, \beta _0) ] ^{\vir}\right)    \end{split}
 \end{equation}
in $A_*(Q^{\ke _{-}}_{g, k} ( \PP ^m, d(\beta) ))_{\QQ}$.

\end{Conj}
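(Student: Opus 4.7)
The plan is to establish the identity at the level of virtual cycles by working throughout on the ambient quasimap moduli space $Q^{\ke_-}_{g,k}(\PP^m, d(\beta))$, so that both sides of \eqref{geometric2} live in a single Chow group and the complete intersection structure of $X \subset \PP^m$ enters only through a cosection on the relative obstruction theory. The first step is a reduction: for $X$ cut out by sections $s_1,\dots,s_r$ of $\cO(l_1),\dots,\cO(l_r)$, rewrite $i_*[Q^{\ke}_{g,k}(X,\beta)]^{\vir}$ as a Kiem--Li cosection-localized class on the Chang--Li type moduli of $\ke$-stable quasimaps to $\PP^m$ endowed with a $p$-field in $\bigoplus_j \cO(-l_j)$. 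After this reduction each term of \eqref{geometric2} becomes a class on $Q^{\ke_-}_{g,k}(\PP^m, d(\beta))$ assembled from the ambient quasimap virtual class, the $p$-field cosection, and the evaluation/psi insertions.

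The main geometric input is then a $\CC^*$-equivariant master space interpolating $\ke_+$- and $\ke_-$-stability across the wall $\ke_0$, in the spirit of Toda's wall-crossing for stable pairs and of the semi-positive quasimap construction in [CKg]. Its $\CC^*$-fixed locus decomposes into: a distinguished copy of $Q^{\ke_+}_{g,k}(\PP^m,d(\beta))$ projecting to the $\ke_-$-side through the contraction morphism $c$; a distinguished copy of $Q^{\ke_-}_{g,k}(\PP^m,d(\beta))$; and, for each finite set $A$ with $|A|\geq 1$ and each admissible splitting $\beta = \beta_0 + \sum_{a\in A}\beta_a$ with $d(\beta_a) = 1/\ke_0$, a locus where $|A|$ rational tails of degree $1/\ke_0$ are glued at the extra marked points of a base quasimap in $Q^{\ke_+}_{g,k+A}(\PP^m, d(\beta_0))$. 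Virtual localization combined with Kiem--Li cosection localization, applied to the master space, then produces a single identity whose first two fixed-locus contributions yield the LHS of \eqref{geometric2}, while the gluing-type loci produce the summands on the RHS.

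The heart of the argument is the identification of each gluing contribution with $\mu_{\beta_a}$. For a single tail of class $\beta_a$, the contribution is a residue integral over an auxiliary one-pointed moduli space weighted by the smoothing-node factor $1/(z+\psi_a)$ and by the inverse equivariant Euler class of the normal bundle in the master space. By definition, $\mu_{\beta_a}$ is the $q^{\beta_a}$-coefficient of $[zI_{sm}(q,z)-z]_+$, and the small $I$-function $I_{sm}$ is itself the generating series of precisely such localization integrals over moduli of $(0+)$-stable one-pointed quasimaps of class $\beta_a$, as made explicit in [CKg0]. Combining these, the residue evaluates to $ev_a^*\mu_{\beta_a}(z)|_{z=-\psi_a}$ capped with the base virtual class; the factor $1/|A|!$ arises from the $S_A$-symmetrization of the labelled tails; and the $p$-field cosection distributes multiplicatively between the base quasimap and the tails because the defining sections vanish compatibly across each gluing node.

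The main expected obstacle is precisely this last identification in the non--semi-positive regime. Unlike in [CKg], the higher direct images of $\cO(l_j)$ on the quasimap universal curves need not vanish, so one cannot simply replace the $p$-field cosection by an honest Euler class; one must verify that the Kiem--Li cosection on the master space restricts compatibly to each $\CC^*$-fixed locus and splits correctly across the node-smoothing direction, so that the residue calculation producing $\mu_{\beta_a}$ remains valid. Once this bookkeeping is in place, \eqref{geometric2} follows, and \eqref{geometric} is recovered by specializing to $\delta_j = 1$.
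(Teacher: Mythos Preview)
First, note that the statement you are addressing is Conjecture~\ref{MainConj}, which the paper does \emph{not} prove in full generality; it is stated as a conjecture. What the paper actually proves is the special case of complete intersections in projective space (Theorem~\ref{Main}), and your proposal is in fact aimed at this case as well, since you assume from the outset that $X$ is cut out by sections $s_1,\dots,s_r$ of line bundles on $\PP^m$.

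For that special case, your approach and the paper's are genuinely different. You propose a $\CC^*$-equivariant master space interpolating between $\ke_+$- and $\ke_-$-stability, followed by virtual localization combined with Kiem--Li cosection localization via Chang--Li $p$-fields. The paper instead embeds each $Q^\ke_{g,k}(X,d)$ concretely as the zero locus of an explicit section $\sigma^\ke$ of a vector bundle $\hat E^\ke_{d'}$ on a smooth Deligne--Mumford stack $U^\ke_{d,d'}$ (built by twisting with a line bundle $\sM$ pulled back from $\overline{M}_{g,k}$ so as to kill $H^1$), and compares virtual classes across the wall via a refinement of MacPherson's graph construction applied to a degeneracy homomorphism $\Phi:\hat E^+_{d'}\to \hat c^*\hat E^-_{d'}$. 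The components of the limit fiber over $\infty\in\PP^1$ are analyzed through explicit filtrations of the obstruction bundles, and a further one-parameter ``ungluing'' deformation separates the tail contributions from the base virtual class; the correcting class $\mu_{d_a}$ is then identified by rerunning the whole argument on graph spaces and extracting the distinguished $\CC^*$-fixed residue there (\S\ref{identification}). No cosection localization and no master space appear.

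Your route, if it can be completed, is conceptually cleaner and closer to the now-standard wall-crossing paradigm; master-space arguments of precisely this type were developed subsequently. The paper's route avoids having to construct the master space and to check the compatibility of cosection localization with virtual localization, at the price of heavier bundle-theoretic bookkeeping and the somewhat ad hoc twisting by $\sM$. Your own acknowledged obstacle---verifying that the $p$-field cosection on the master space restricts compatibly to each fixed component and splits correctly across the node-smoothing direction in the non-semi-positive regime---is genuine and is exactly what separates your sketch from a proof; the paper sidesteps this entirely by never invoking cosection localization.
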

\noindent In the above statement, 
$c_A: Q^{\ke_+}_{g, k+A} (\PP ^m, d(\beta_0 ) ) \ra Q^{\ke _{-}}_{g, k+A } (\PP ^m , d(\beta_0) )$
is the contraction of rational tails of degree $d(\beta_a)=1/\ke_0$.

\begin{Rmk} For $X$ a {\it semi-positive} quasi-projective toric manifold, Conjecture \ref{MainConj} coincides with Theorem 4.2.1 in \cite{CKg}, and the result is valid for any GIT presentation of $X$, see \cite[\S 5.9.2]{CKg}. In fact, the localization argument of \cite{CKg} extends with little effort to prove \eqref{geometric2} for {\it all} toric manifolds (i.e., no positivity restriction), offering the first evidence for
the validity of Conjecture \ref{MainConj}.
We will treat this extension elsewhere.
\end{Rmk}

\subsection{Numerical consequences} In this subsection we assume that $(W,\G,\theta)$ is a triple for which Conjecture \ref{MainConj} holds. We
work with arbitrary stability parameters
$\ke\in\QQ_{>0}\cup\{0+\}$ and will write $\ke=\infty$ for all parameters in the Gromov-Witten chamber
$(1,\infty)$.

Consider a formal power series in one variable $\psi$,
$${\bf t}(\psi):=t_0+t_1\psi+t_2\psi^2+t_3\psi^3+\dots,$$ 
with coefficients $t_j\in A^*(X)_\QQ$ general Chow cohomology classes.

The {\it genus $g$, $\ke$-descendent potential} of $X$ is 
\begin{equation*} F^\ke_g({q,\bf t}(\psi)):=\sum_{(\beta ,k)} \frac{q^\beta}{k!}\lan {\bf t}(\psi_1),{\bf t}(\psi_2),\dots {\bf t}(\psi_k) \ran^\ke_{g,k,\beta},\end{equation*}
the sum over all pairs $(\beta, k)$ for which the corresponding moduli spaces exist. 
If we choose a basis $\{\gamma_j\}$ in $A^*(X)_\QQ$ and write $t_i=\sum_j t_{ij}\gamma_j$, $i=0,1,2,
\dots $, then $F^\ke_g({q,\bf t}(\psi))$ is a formal power series in the infinitely many variables $t_{ij}$, whose Taylor coefficients are the
$\ke$-quasimap invariants \eqref{invariants}.
In particular, $F_g^\infty$ is the generating series for all descendent genus $g$ Gromov-Witten invariants of $X$.

\subsubsection{Wall-crossing from Gromov-Witten invariants to $\ke$-quasimap invariants}
Let $J^\ke_{sm}(q,z)$ be the small $J$-function of $X$ (\cite[Definition 5.1.1]{CKg0}, specialized at 
${\bf t}=0$). With this notation, $I_{sm}=J^{0+}_{sm}$. Let 
$$[zJ^\ke_{sm}-z]_+:=J_1^\ke(q)+(J^\ke_0(q)-1)z+J^\ke_{-1}(q)z^2+...$$
This is explicit for all $\ke$, since it is a $q$-truncation of the corresponding expression for the small $I$-function: 
\begin{equation*}
[zJ^\ke_{sm}(q,z)-z]_+=[zI_{sm}(q,z)-z]_+\; (\mathrm{mod}\;\mathfrak{a}_\ke),
\end{equation*}
with $\mathfrak{a}_\ke$ the ideal in the Novikov ring generated by $\{q^\beta\; |\: \beta(L_\theta) >\frac{1}{\ke}\}$.

\begin{Cor}\label{numerical1} For any $\ke\geq 0+$, and any $g\geq 1$,
$$F_g^\ke(q,{\bf t}(\psi))=F^\infty_g\left(q,{\bf t}(\psi)+[zJ^\ke_{sm}(q)-z]_+\big|_{z=-\psi}\right).$$
Further, in genus $g=0$ the same relation holds after discarding from $F^\infty_0(q,{\bf t}(\psi))$ the terms corresponding to pairs 
$(\beta,k)$ for
which $Q^\ke_{0,k}(X,\beta)$ is not defined.
\end{Cor}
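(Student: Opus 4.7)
The plan is to derive Corollary \ref{numerical1} from Conjecture \ref{MainConj} by integrating \eqref{geometric2} against descendants at every wall $\ke_0=1/n$, $n\in\NN$, between the Gromov--Witten chamber $\ke=\infty$ and the chamber of the prescribed $\ke$, then telescoping across all walls.

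At a single wall $\ke_0$ with $\ke_+>\ke_0\geq\ke_-$, cap both sides of \eqref{geometric2} with $\prod_{j=1}^k\psi_j^{a_j}$ and push forward to a point via $i_*$ and the structure map of $Q^{\ke_-}_{g,k}(\PP^m,d(\beta))$. Because $c$, $c_A$, and $b_A$ all fix the $k$ original markings (the contracted rational tails and the markings in $A$ are disjoint from the original sections), one has $c^*\psi_j=\psi_j$, $c_A^*\psi_j=\psi_j$, $b_A^*\psi_j=\psi_j$ and similarly for $ev_j^*\delta_j$, so the projection formula collapses \eqref{geometric2} into the numerical identity
\begin{equation*}
\lan\delta_1\psi_1^{a_1},\dots,\delta_k\psi_k^{a_k}\ran^{\ke_-}_{g,k,\beta}-\lan\delta_1\psi_1^{a_1},\dots,\delta_k\psi_k^{a_k}\ran^{\ke_+}_{g,k,\beta}=\sum_{|A|\geq1}\frac{1}{|A|!}\sum_{\beta=\beta_0+\sum_a\beta_a}\lan\delta_1\psi_1^{a_1},\dots,\delta_k\psi_k^{a_k},\{\mu_{\beta_a}(-\psi_a)\}_{a\in A}\ran^{\ke_+}_{g,k+A,\beta_0},
\end{equation*}
in which every $\beta_a$ satisfies $d(\beta_a)=n$.

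Multiplying by $q^\beta/k!$, summing over $(k,\beta)$, and specializing the $(k+|A|)$ insertions to the single descendant input ${\bf t}(\psi)$, the $1/|A|!$ symmetrization over the extra markings is the standard combinatorial mechanism that converts the added $\prod_a\mu_{\beta_a}(-\psi_a)$ into a substitution of the argument of $F^{\ke_+}_g$. Setting $\Delta^{\ke_0}(q,\psi):=\sum_{\beta:d(\beta)=n}\mu_\beta(-\psi)q^\beta$, the single-wall identity reads
\begin{equation*}
F^{\ke_-}_g(q,{\bf t}(\psi))=F^{\ke_+}_g\bigl(q,{\bf t}(\psi)+\Delta^{\ke_0}(q,\psi)\bigr).
\end{equation*}
By the truncation identity $[zJ^\ke_{sm}-z]_+\equiv[zI_{sm}-z]_+\pmod{\mathfrak{a}_\ke}$ recalled above, $\Delta^{\ke_0}(q,\psi)$ is exactly $[zJ^{\ke_-}_{sm}-z]_+|_{z=-\psi}-[zJ^{\ke_+}_{sm}-z]_+|_{z=-\psi}$. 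Since $J^\infty_{sm}=1$ (because $\mathfrak{a}_\infty$ is generated by all nonzero $q^\beta$), iterating the single-wall substitution from $\ke=\infty$ down through every wall above $\ke$ yields the cumulative shift ${\bf t}(\psi)\mapsto{\bf t}(\psi)+[zJ^\ke_{sm}(q,z)-z]_+|_{z=-\psi}$. Only finitely many walls contribute to each $q^\beta$-coefficient, so the iteration converges $q$-adically.

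For genus zero the same derivation applies, except that certain moduli spaces $Q^{\ke'}_{0,k+A}(X,\beta_0)$ along the chain of walls fail to exist for unstable $(k+|A|,\beta_0)$; deleting the corresponding terms from $F^\infty_0$ recovers the modified identity in the statement. The main obstacle is the combinatorial step of the third paragraph: verifying that summing the symmetrized $A$-insertions over all splittings $\beta=\beta_0+\sum_a\beta_a$ and all $|A|\geq 1$ reorganizes precisely into the Taylor expansion of $F^{\ke_+}_g$ along $\Delta^{\ke_0}(q,\psi)$, and that composing such substitutions across successive walls gives the single substitution by the difference of the endpoint $J$-functions. This reduces to an exponential-generating-function identity that is formally standard but requires care in the Novikov-completed setting.
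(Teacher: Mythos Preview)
Your proposal is correct and follows essentially the same approach as the paper's proof: note that the $\psi$-classes and evaluation pull-backs at the first $k$ markings are preserved by $i$, $c$, $c_A$, $b_A$; integrate \eqref{geometric2} against descendant insertions using the projection formula; then iterate across the walls between $\infty$ and $\ke$. The paper's proof is three sentences and leaves the combinatorial repackaging implicit, whereas you have spelled out the single-wall numerical identity, the identification of the shift $\Delta^{\ke_0}$ with the jump in $[zJ^\ke_{sm}-z]_+$, and the telescoping --- but the underlying argument is the same.

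One small imprecision: it is not that $J^\infty_{sm}=1$ (the full small $J$-function in the Gromov--Witten chamber has the usual $O(1/z)$ tail), but rather that $[zJ^\infty_{sm}-z]_+=0$, since the truncation kills every $q^\beta$ with $\beta\neq 0$. This is what you actually use, so the argument is unaffected. The ``main obstacle'' you flag --- that the $1/|A|!$-symmetrized extra insertions reorganize into a Taylor shift of the argument of $F_g^{\ke_+}$ --- is indeed the formal content, but it is the standard exponential-generating-function identity and the paper treats it as routine as well.
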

\begin{proof} The $\psi$-classes at the markings $1,\dots ,k$ pull-back under the maps $b_A$, $c$, $c_A$, and $i$. Applying the virtual class wall-crossing \eqref{geometric2} in Conjecture \ref{MainConj} successively for the walls
from $1$ to $\ke$ (and using the projection formula) gives the equality of the Taylor coefficients 
of the two sides in the claimed equality.
\end{proof}
\begin{Rmk}
$(i)$ 
The formula in Corollary \ref{numerical1} is equivalent to
$$F_g^\ke\left(q,{\bf t}(\psi)-[zJ^\ke_{sm}(q)-z]_+\big|_{z=-\psi}\right)=F^\infty_g(q,{\bf t}(\psi)).
$$
$(ii)$ Assuming only the formula \eqref{geometric} from Conjecture \ref{MainConj} gives the weaker equality
$$F_g^\ke(q,\bar{\bf t}(\psi))=F^\infty_g\left(q,\bar{\bf t}(\psi)+[zJ^\ke_{sm}(q)-z]_+\big|_{z=-\psi}\right),$$
with $\bar{\bf t}(\psi)$ the restriction of ${\bf t}(\psi)$ to the subring $i^*A^*(\PP^m)_\QQ\subset A^*(X)_\QQ$.
\end{Rmk}
\subsubsection{Semi-positive targets}
Recall that a triple $(W,\G,\theta)$ is called semi-positive if 
$$\beta(\det T_W)=\beta(-K_{[W/\G]})\geq 0$$ 
for every $\beta\in{\mathrm {Eff}}(W,\G,\theta)$. For such targets we have 
$$[zJ^\ke_{sm}(q)-z]_+= J_1^\ke(q)+(J_0^\ke(q)-1)z,$$
since $\deg(\mu_\beta(z))\leq 1$ for all $\beta$.
The wall-crossing formula of Corollary \ref{numerical1} becomes
\begin{equation}\label{semipositive numerical}
F_g^\ke(q,{\bf t}(\psi))=F^\infty_g\left(q,{\bf t}(\psi)+J_1^\ke(q)-(J_0^\ke(q)-1)\psi\right).
\end{equation}
In fact, equation \eqref{semipositive numerical} is equivalent to the wall-crossing formula conjectured in  \cite[Conjecture 1.2.1]{CKg}:

\begin{Cor}\label{numerical2}
 For a semi-positive triple $(W,\G,\theta)$ we have
\begin{equation}\label{GW to epsilon}
 (J_0^\ke)^{2g-2}\left(\delta_g^1\left(\frac{\chi_{\mathrm{top}}(X)}{24}\log J_0^\ke(q)\right)
+ F_g^\ke(q,{\bf t}(\psi))\right)= F^\infty_g\left(q,\frac{{\bf t}(\psi)+J_1^\ke(q)}{J^\ke_0(q)}\right),
\end{equation}
where $\chi_{\mathrm{top}}(X)$ denotes the topological Euler characteristic and $\delta^1_g$ is the Kronecker delta.
(In genus $g=0$ we use the same convention as in Corollary \ref{numerical1}.)
\end{Cor}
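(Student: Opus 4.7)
The plan is to show that \eqref{GW to epsilon} is equivalent to \eqref{semipositive numerical} via the integrated dilaton equation for the descendent Gromov-Witten potential. Introducing the shorthand $c := J_0^\ke(q)$ and $\mathbf{s}(\psi) := \mathbf{t}(\psi) + J_1^\ke(q)$ --- both well-defined substitutions since $c = 1 + O(q)$ and $J_1^\ke = O(q)$ in the Novikov ring --- the right-hand side of \eqref{semipositive numerical} becomes $F_g^\infty(q, \mathbf{s}(\psi) - (c-1)\psi)$, and a direct rearrangement shows that \eqref{GW to epsilon} is equivalent to the universal identity
\begin{equation*}
F_g^\infty\bigl(q, \mathbf{s}(\psi) - (c-1)\psi\bigr) \;=\; c^{2-2g}\, F_g^\infty\bigl(q, \mathbf{s}(\psi)/c\bigr) \;-\; \delta_{g,1}\,\frac{\chi_{\mathrm{top}}(X)}{24}\log c.
\end{equation*}
This identity no longer involves any quasimap data, so the problem reduces to an assertion about the pure descendent Gromov-Witten potential.

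To prove it, I would expand the left-hand side as a formal Taylor series in $u := c-1$. By symmetry of the correlators,
\begin{equation*}
F_g^\infty(q, \mathbf{s}(\psi) - u\psi) \;=\; \sum_{\beta,k,m} \frac{q^\beta(-u)^m}{k!\,m!}\bigl\langle \mathbf{s}(\psi_1),\dots,\mathbf{s}(\psi_k),\psi_{k+1},\dots,\psi_{k+m}\bigr\rangle^\infty_{g,k+m,\beta}.
\end{equation*}
For each $(g,k,\beta)$ in the stable range, $m$-fold iteration of the standard dilaton equation produces the factor $\prod_{i=0}^{m-1}(2g-2+k+i)$, and summing $(-u)^m/m!$ against this via the generalized binomial theorem yields $(1+u)^{-(2g-2+k)} = c^{2-2g-k}$. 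The factor $c^{-k}$ is absorbed, by multilinearity of the correlator in each entry, into the rescaling $\mathbf{s}\mapsto \mathbf{s}/c$, and the stable contributions collect to $c^{2-2g}F_g^\infty(q,\mathbf{s}(\psi)/c)$. For $g\geq 2$ the potential is entirely stable, and for $g=0$ the convention of Corollary \ref{numerical1} discards the unstable pairs consistently on both sides, so the identity holds without correction.

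The nontrivial correction arises only in genus $1$ from the unique unstable class $(g,k,\beta)=(1,0,0)$, which forces us to evaluate $\sum_{m\geq 1}\tfrac{(-u)^m}{m!}\langle \psi,\dots,\psi\rangle^\infty_{1,m,0}$ separately. Using $[\overline{M}_{1,m}(X,0)]^{\vir}=c_n(\mathbb{E}^\vee\boxtimes T_X)\cap[\overline{M}_{1,m}\times X]$, where $\mathbb{E}$ is the rank-one Hodge bundle, dimensional matching picks out the single summand $\lambda_1^0 \cdot c_n(T_X)$ in the expansion of the top Chern class; combined with the iterated dilaton evaluation $\int_{\overline{M}_{1,m}}\psi_1\cdots\psi_m=(m-1)!/24$ (bootstrapped from $\int_{\overline{M}_{1,1}}\psi_1=1/24$), this gives $\langle\psi,\dots,\psi\rangle^\infty_{1,m,0}=\tfrac{(m-1)!}{24}\chi_{\mathrm{top}}(X)$, so the remaining series $\tfrac{\chi_{\mathrm{top}}(X)}{24}\sum_{m\geq 1}(-u)^m/m=-\tfrac{\chi_{\mathrm{top}}(X)}{24}\log c$ accounts precisely for the correction term. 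The only real obstacle is this careful bookkeeping of the $(1,0,0)$ contribution; all else is a routine binomial computation.
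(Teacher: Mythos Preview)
Your proposal is correct and follows essentially the same approach as the paper: both derive \eqref{GW to epsilon} from \eqref{semipositive numerical} by iterating the dilaton equation to eliminate the $(J_0^\ke-1)\psi$ insertions, with the extra $\log J_0^\ke$ term arising from the failure of the dilaton equation at the unstable point $(g,k,\beta)=(1,0,0)$ via $\langle\psi\rangle_{1,1,0}^\infty=\chi_{\mathrm{top}}(X)/24$. Your version simply makes the binomial summation and the $(m-1)!$ bookkeeping explicit where the paper leaves them implicit.
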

\begin{proof} Using the dilaton equation for Gromov-Witten invariants in the right-hand side of 
\eqref{semipositive numerical} to remove the insertions $(J_0^\ke(q)-1)\psi$ produces exactly \eqref{GW to epsilon}. The additional term
$\delta_g^1\left(\frac{\chi_{\mathrm{top}}(X)}{24}\log J_0^\ke(q)\right)$ appears due to the failure
of the dilaton equation for $\overline{M}_{1,1}(X,0)=\overline{M}_{1,1}\times X$. Namely,  since the virtual class is
 $$[\overline{M}_{1,1}(X,0)]^{\mathrm{vir}}=\left(1\otimes c_{\dim X}(T_X)-\psi \otimes c_{\dim X-1}(T_X)\right)\cap [\overline{M}_{1,1}\times X],$$
 we have 
 \begin{equation*}
 \lan \psi\ran_{1,1,0}^\infty=\int_{\overline{M}_{1,1}\times X}\psi \otimes c_{\dim X}(T_X)=\frac{1}{24}\chi_{\mathrm{top}}(X),
 \end{equation*}
while the dilaton equation would formally predict $\lan \psi\ran_{1,1,0}^\infty=0$.
\end{proof}

\subsection{Complete intersections in projective space}
The main result of the paper is a proof of Conjecture \ref{MainConj} for projective complete intersections. In fact, we will prove the following slightly strengthened version.

Let $V$ be the affine space of dimension $n+1$ with the standard diagonal $\G:=\CC ^*$-action
and linearization $\theta={\mathrm{id}}_{\CC^*}$.
Let $W$ be a complete intersection of $r\leq n$ homogeneous hypersurfaces in $V$. Then 
$X:=\WmodtG$. is the corresponding projective complete intersection in $\PP(V)$ (and $W$ is the affine cone over $X$). Assume that the hypersurfaces are general, so that $X$ is smooth. We take
$X\hookrightarrow \PP(V)$ as our embedding $i$. In this case, the induced
$$i: Q^\ke_{g,k}(X,d)\lra  Q^\ke_{g,k}(\PP(V),d)$$
are also embeddings. The maps that replace markings by base-points, as well as the contraction maps,
respect these embeddings,  i.e., given a wall $\ke=1/{d_a}$ and $\ke_+>\ke\geq\ke_-$ nearby, we have restrictions
$$b_A: Q^{\ke_+}_{g,k+A}(X,d_0^A)\lra Q^{\ke_+}_{g,k}(X,d),$$
where $d_0^A=d-|A|d_a$, and
$$c: Q^{\ke_+}_{g, k}(X,d)\lra Q^{\ke_-}_{g, k}(X, d).$$

\begin{Thm}\label{Main}
There is an equality
\begin{equation*} 
\begin{split}& 
[Q^{\ke_{-}} _{g, k} (X, d ) ] ^{\vir}-c_*[Q^{\ke_{+}}_{g, k}(X, d ) ]^{\vir}  =\\ 
&\sum _{|A|}  \frac{1}{|A|!} (b_{A})_ * (c_A)_* 
 \left(\prod _{a\in A} ev_a^*\mu _{ d_a}(z)|_{z=-\psi_a}  \cap [Q^{\ke_{+}} _{g, k+A} (X, d _0^A) ] ^{\vir}\right)    \end{split}
 \end{equation*}
in the Chow group $A_*(Q^{\ke _{-}}_{g, k} ( X, d ))_{\QQ}$.
 \end{Thm}

Since Theorem \ref{Main} implies the formula \eqref{geometric2}, the relations between $\ke$-quasimap invariants and Gromov-Witten invariants in Corollaries \ref{numerical1} and \ref{numerical2} hold  
for nonsingular 
complete intersections $X\subset\PP^n$ of codimension $r\leq n$.

Let $l_1,l_2,\dots ,l_r$ be the degrees of the hypersurfaces whose intersection is $X$.
The small $I$-function of $X$ is given by the well-known formula (see \cite{Givental-equiv})
\begin{equation*}
I(q,z)=\one+\sum_{d\geq 1}q^d\frac{\prod_{i=1}^r\prod_{j=1}^{l_id}(l_iH+jz)}{\prod_{j=1}^d (H+jz)^{n+1}},
\end{equation*}
where $H$ denotes the restriction to $X$ of the hyperplane class on $\PP^n$. 

If $\sum_{i=1}^r l_i\geq n+2$, so that $X$ is a variety of general type, we do not know of any simplification of the wall-crossing formula in Corollary \ref{numerical1}. Note that even in genus $g=0$ our result is new.  

If $X$ is Fano or Calabi-Yau, more precise statements can be made.

The case $\sum_{i=1}^r l_i\leq n-1$ of complete intersections which are Fano of index at least two is the simplest, since $J_0^\ke(q)=1$ and
$J_1^\ke(q)=0$ for all $\ke\geq 0+$. We conclude the following $\ke$-independence result.
\begin{Cor} The quasimap invariants of a projective complete intersection with $\sum_il_i\leq n-1$ are independent of $\ke$.
\end{Cor}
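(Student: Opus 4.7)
The plan is to deduce $\ke$-independence directly from Corollary \ref{numerical1} by showing that the ``correction series'' $[zJ^\ke_{sm}(q,z)-z]_+$ vanishes identically in the index-$\geq 2$ Fano range. Under the hypothesis $\sum_{i=1}^r l_i\leq n-1$, Corollary \ref{numerical1} then gives $F^\ke_g(q,{\bf t}(\psi))=F^\infty_g(q,{\bf t}(\psi))$ for every $\ke\geq 0+$, so the Taylor coefficients—which are the $\ke$-quasimap invariants—coincide with the Gromov-Witten invariants and in particular are independent of $\ke$.

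First I would carry out a degree count in $z$ on the closed formula for $I(q,z)$. The coefficient of $q^d$ with $d\geq 1$ has numerator a product of $(\sum_i l_i)d$ factors linear in $z$ and denominator a product of $(n+1)d$ such factors, so when written as a Laurent series in $z$ with coefficients in $A^*(X)_\QQ$, it begins with $z^{(\sum_i l_i-n-1)d}$. The hypothesis $\sum_i l_i\leq n-1$ forces the exponent to be at most $-2d\leq -2$, hence the coefficient of every positive power of $q$ in $I(q,z)$ is $O(1/z^2)$. Multiplying by $z$ and subtracting $z\cdot \one$ from the $q^0$ term, every $q^\beta$-coefficient ($\beta\neq 0$) of $zI(q,z)-z$ is $O(1/z)$, so $[zI(q,z)-z]_+=0$. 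Equivalently $\mu_d(z)=0$ for all $d\geq 1$.

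Next I would invoke the truncation identity
\begin{equation*}
[zJ^\ke_{sm}(q,z)-z]_+=[zI(q,z)-z]_+\;(\mathrm{mod}\;\mathfrak{a}_\ke)
\end{equation*}
recalled in the excerpt, which immediately yields $J^\ke_0(q)=1$, $J^\ke_1(q)=0$, and $[zJ^\ke_{sm}(q,z)-z]_+|_{z=-\psi}=0$ for every $\ke\geq 0+$. Plugging this into Corollary \ref{numerical1} (which is applicable because Theorem \ref{Main} establishes Conjecture \ref{MainConj} for projective complete intersections) gives the desired equality of generating series in every genus $g\geq 1$, and the analogous statement in genus $0$ after discarding the unstable $(\beta,k)$ pairs.

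There is no substantive obstacle here: the argument is an application of the main theorem together with a one-line degree count. The only point requiring care is bookkeeping the genus-$0$ convention for pairs $(\beta,k)$ where $Q^\ke_{0,k}(X,\beta)$ is undefined, but the equality of Taylor coefficients in the range where both sides are defined is exactly what is meant by $\ke$-independence of the invariants.
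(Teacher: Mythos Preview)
Your proposal is correct and follows essentially the same approach as the paper: the paper observes (from the general degree formula $\deg\mu_\beta(z)=1+\beta(K_{[W/\G]})$, equivalently your explicit degree count on the hypergeometric series) that $J_0^\ke(q)=1$ and $J_1^\ke(q)=0$ for all $\ke\geq 0+$, and then invokes Corollary~\ref{numerical1}. Your write-up simply makes the degree count on $I(q,z)$ explicit rather than citing the homogeneity of $\mu_\beta$, but the content is identical.
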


In the Fano of index one case, $\sum_{i=1}^r l_i= n$, we have $J_0^\ke(q)=1$ and
$J_1^\ke(q)=q (\prod_{i=1}^rl_i!)\one$ for all $0+\leq \ke\leq 1$.
\begin{Cor}\label{index one}
For a projective complete intersection with $\sum_il_i=n$ and for $0+\leq\ke\leq 1$ we have
$$F_g^\ke({\bf t}(\psi))=F_g^\infty({\bf t}(\psi)+ q( \prod_{i=1}^rl_i!)\one).$$

In particular, if
$(g,n)\neq(0,1),(0,2)$, then the {\em primary} invariants are again $\ke$-independent:
\begin{equation*}
\lan\gamma_1,\dots\gamma_n\ran_{g,n,\beta}^\ke=\lan\gamma_1,\dots\gamma_n\ran_{g,n,\beta}^\infty.
\end{equation*}
\end{Cor}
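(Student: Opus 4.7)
The plan is to specialize Corollary \ref{numerical1} (which holds here by Theorem \ref{Main}) once $[zJ^\ke_{sm}(q)-z]_+$ has been computed explicitly, and then extract the primary invariants via the Gromov-Witten string equation.

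For a Fano index one projective complete intersection we have $\beta(K_{[W/\G]})=d(\sum_il_i-(n+1))=-d$ when $\beta$ corresponds to degree $d$, so $\mu_d(z)$ must be homogeneous of total degree $1-d$. Since $\mu_d(z)\in A^*(X)_\QQ[z]$ with both $z$ and Chow classes non-negatively graded, this forces $\mu_d\equiv 0$ for every $d\geq 2$ and $\mu_1\in A^0(X)_\QQ=\QQ\cdot\one$. To pin down the constant $\mu_1$, I extract the $z$-nonnegative part of the $q^1$-coefficient of $zI(q,z)$, namely $z\prod_i\prod_{j=1}^{l_i}(l_iH+jz)/(H+z)^{n+1}$; a direct expansion in $1/z$ shows its top-degree-in-$z$ term is precisely $\prod_il_i!$, so $\mu_1=(\prod_il_i!)\one$. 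The truncation $[zJ^\ke_{sm}-z]_+=[zI_{sm}-z]_+\pmod{\mathfrak{a}_\ke}$ only discards Novikov classes $q^d$ with $d>1/\ke\geq 1$, which is a no-op since $\mu_d=0$ for $d\geq 2$. Therefore $[zJ^\ke_{sm}(q)-z]_+=q(\prod_il_i!)\one$ for every $0+\leq\ke\leq 1$, and substituting into Corollary \ref{numerical1} yields the first formula of the corollary.

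For the primary invariants statement, I would specialize to ${\bf t}(\psi)=t_0$, expand $F_g^\infty(t_0+q(\prod_il_i!)\one)$ multilinearly, and read off the coefficient of $\gamma_1\cdots\gamma_k\, q^\beta$ to obtain
\begin{equation*}
\lan\gamma_1,\dots,\gamma_k\ran^\ke_{g,k,\beta}=\sum_{n\geq 0}\frac{(\prod_il_i!)^n}{n!}\,\lan\underbrace{\one,\dots,\one}_{n},\gamma_1,\dots,\gamma_k\ran^\infty_{g,k+n,\beta-n}.
\end{equation*}
The $n=0$ summand matches the Gromov-Witten primary invariant, so it remains to kill every $n\geq 1$ term. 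The idea is to iterate the Gromov-Witten string equation, forgetting one $\one$-insertion at a time: since all remaining insertions are purely primary, each application produces only $\tau_{-1}$-terms and hence zero. The main obstacle, and the reason for the excluded cases, is justifying this reduction by verifying that the chain of moduli spaces $\oM_{g,k+n-i}(X,\beta-n)$ (for $0\leq i\leq n$) is stable throughout; excluding $(g,k)=(0,1),(0,2)$ rules out precisely the sporadic non-vanishings $\lan\one,\gamma_1,\gamma_2\ran^\infty_{0,3,0}=\int_X\gamma_1\gamma_2$ and $\lan\one,\one,\gamma\ran^\infty_{0,3,0}=\int_X\gamma$ (both computed on $\oM_{0,3}(X,0)\cong X$), which would otherwise contribute via the $n=1,\beta=1$ and $n=2,\beta=2$ summands respectively. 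In all remaining cases the iterated string equation (or a trivial dimension count) kills the $n\geq 1$ terms, giving the claimed $\ke$-independence.
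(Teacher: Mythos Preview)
Your proposal is correct and follows the same approach as the paper. The paper itself gives almost no detail: it records (just before the corollary) that $J_0^\ke=1$ and $J_1^\ke=q(\prod_i l_i!)\one$, and then remarks (just after) that the primary-invariant statement ``is a consequence of the string equation in Gromov-Witten theory''. You have filled in both of these steps accurately --- the degree count forcing $\mu_d=0$ for $d\geq 2$, the explicit extraction of $\mu_1$ from the $I$-function, and the identification of the two degree-zero three-point invariants that obstruct the string-equation reduction precisely when $(g,k)=(0,1),(0,2)$. One small simplification: there is no need to iterate the string equation --- a single application already kills $\lan\one^n,\gamma_1,\dots,\gamma_k\ran^\infty_{g,k+n,\beta-n}$ whenever $\oM_{g,k+n-1}(X,\beta-n)$ exists, since all remaining insertions are primary.
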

The second equality in Corollary \ref{index one} is a consequence of the string equation in Gromov-Witten theory.

The most interesting is the Calabi-Yau case $\sum_{i=1}^r l_i= n+1$, for which
\begin{align*} 
& J_0^\ke(q)=\sum_{0\leq d\leq \frac{1}{\ke}}q^d\frac{\prod_{i=1}^r(l_id)!}{d!^{n+1}},\\ 
  &J_1^\ke(q)= H\sum _{1\leq d\leq \frac{1}{\ke}}
 q^d \frac{\prod _{i=1}^r (l_id) !}{d!^{n+1}} \left(\sum _{i=1}^r \sum_{k=1}^{l_id} \frac{l_i}{k}  - (n+1)\sum_{k=1}^{d} \frac{1}{k}\right).
\end{align*}

For every $\ke$ and every $d$, the virtual dimension of the moduli space $Q^{\ke}_{g, k } (X , d  )$ is equal to $(\dim X-3)(1-g)+k$. We split the discussion according to the genus.

\subsubsection{Genus zero}
The wall-crossing formula \eqref{GW to epsilon} at $g=0$ for a Calabi-Yau complete intersection is proved in \cite[\S3]{CKg} using Dubrovin-type reconstruction arguments and results from \cite{CKg0}. Here we just note that the new proof in this paper {\it does not} use the torus action on $\PP^n$.

\subsubsection{Genus one} 
When $g=1$, the virtual dimension is independent of the dimension of $X$. Consider the unpointed case $k=0$, i.e. the specialization of \eqref{GW to epsilon} at $g=1$, and ${\bf t}(\psi)=0$. Separating the $d=0$ contributions and applying the divisor equation in the Gromov-Witten side gives
\begin{Cor}\label{genus 1 BCOV} For a Calabi-Yau complete intersection $X\subset\PP^n$
\begin{align}\label{genus 1 wall-crossing}
&\frac{1}{24}\chi_{\mathrm{top}}(X) \log J_0^\ke+\sum_{d\geq 1} q^d\lan\;\ran_{1,0,d}^{\ke}=\\
\nonumber &-\frac{1}{24}\int_X \frac{J_1^\ke}{J_0^\ke}c_{\dim X-1}(T_X)+\sum_{d\geq 1} q^d\exp\left(\int_{d[line]}\frac{J_1^\ke}{J_0^\ke}\right) \lan\;\ran_{1,0,d}^\infty.
\end{align}
\end{Cor}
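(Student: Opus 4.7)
The plan is to specialize the semi-positive numerical wall-crossing \eqref{GW to epsilon} at $g=1$, $k=0$, ${\bf t}(\psi)=0$, and then evaluate the resulting Gromov--Witten side via the divisor equation, treating the $d=0$ contribution by hand.

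Since $(J_0^\ke)^{2g-2}=1$ for $g=1$, and since the moduli space $Q^\ke_{1,0}(X,0)$ is undefined for every $\ke$ (a smooth genus-one unpointed curve is not $\ke$-stable), the left-hand side of \eqref{GW to epsilon} becomes
\[\tfrac{1}{24}\chi_{\mathrm{top}}(X)\log J_0^\ke(q)+\sum_{d\geq 1} q^d\lan\;\ran^\ke_{1,0,d},\]
which matches the left-hand side of the Corollary. Set $T(q):=J_1^\ke(q)/J_0^\ke(q)$; by the explicit formula \eqref{CY} for $J_1^\ke$, $T(q)$ is a power series in $q$ with vanishing constant term and with coefficients in $\QQ H\subset A^1(X)_\QQ$.

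I would then expand the right-hand side
\[F_1^\infty(q,T(q))=\sum_{d\geq 0,\,k\geq 0}\frac{q^d}{k!}\lan T(q),\dots,T(q)\ran^\infty_{1,k,d}\]
and separate the $d\geq 1$ and $d=0$ contributions. For $d\geq 1$ the moduli space $\overline{M}_{1,k}(X,d)$ is stable for every $k\geq 0$, and since all insertions are primary divisor classes, iterated application of the divisor equation gives
\[\lan T(q),\dots,T(q)\ran^\infty_{1,k,d}=\Bigl(\int_{d[\mathrm{line}]}T(q)\Bigr)^{k}\lan\;\ran^\infty_{1,0,d};\]
summing on $k$ with the factor $1/k!$ produces the exponential weights $\exp\bigl(\int_{d[\mathrm{line}]}J_1^\ke/J_0^\ke\bigr)$ attached to $\lan\;\ran^\infty_{1,0,d}$, which is precisely the second term on the right-hand side of the Corollary. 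For $d=0$ and $k\geq 2$ the divisor equation still applies (since $(1,k-1,0)$ is stable), and produces the factor $\int_{0}T(q)=0$, so all such terms vanish.

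The only delicate case, and the main obstacle, is $(g,k,d)=(1,1,0)$: here the divisor equation fails, because $\overline{M}_{1,0}(X,0)$ is undefined. I would evaluate $\lan T(q)\ran^\infty_{1,1,0}$ directly using the virtual class
\[[\overline{M}_{1,1}(X,0)]^{\vir}=\bigl(c_{\dim X}(T_X)-\psi\, c_{\dim X-1}(T_X)\bigr)\cap[\overline{M}_{1,1}\times X]\]
recalled in the proof of Corollary \ref{numerical2}, combined with $\int_{\overline{M}_{1,1}}1=0$ and $\int_{\overline{M}_{1,1}}\psi=1/24$. This yields
\[\lan T(q)\ran^\infty_{1,1,0}=-\tfrac{1}{24}\int_X \tfrac{J_1^\ke}{J_0^\ke}\,c_{\dim X-1}(T_X),\]
which is exactly the extra boundary term on the right-hand side of the Corollary. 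Adding it to the $d\geq 1$ contribution yields the desired identity.
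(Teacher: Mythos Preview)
Your proof is correct and follows exactly the approach sketched in the paper: specialize \eqref{GW to epsilon} at $g=1$, $k=0$, ${\bf t}=0$, separate the $d=0$ terms, and apply the divisor equation on the Gromov--Witten side. The paper's own argument is a one-line sketch; you have simply (and accurately) filled in the details, including the explicit handling of the $(1,1,0)$ contribution via the virtual class formula recalled in the proof of Corollary~\ref{numerical2}.
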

When $\ke=0+$, the
formula \eqref{genus 1 wall-crossing} answers a question raised first in \cite[\S10.2]{MOP}. 
Note that the unpointed genus one $(0+)$-invariants $\lan\ \ran_{1,0,d}^{0+}$ have been recently calculated by Kim and Lho  (\cite{KL}) in terms of the small $I$-function. Combining \cite[Theorem 1.1]{KL} with
Corollary \ref{genus 1 BCOV} gives new proofs for the main results on genus one Gromov-Witten invariants of $X$ from 
\cite{Zinger} and \cite{Popa}. 

\subsubsection{Higher genus} 
If $g\geq 2$ and $\dim X\geq 4$, the virtual classes (hence the invariants) vanish by dimension considerations. We restrict to the case of unpointed invariants of Calabi-Yau threefolds. The invariants for $d=0$ are the same for all stability conditions and are given by the formula
$$\lan\;\ran_{g,0,0}^{\ke}=
\frac{(-1)^g}{2}\chi_{\mathrm{top}}(X)\frac{|B_{2g}|}{2g}\frac{|B_{2g-2}|}{2g-2}\frac{1}{(2g-2)!},
$$
with $B_{2g}, B_{2g-2}$ the Bernoulli numbers, see \cite{getzler-p}, \cite{faber-p}.
\begin{Cor}\label{qmap BCOV} 
For a Calabi-Yau threefold complete intersection in $\PP^n$, $g\geq 2$ and $\ke\geq 0+$,
\begin{align*} 
&J_0^\ke(q)^{2g-2}\left(\frac{(-1)^g}{2}\chi_{\mathrm{top}}(X)\frac{|B_{2g}|}{2g}\frac{|B_{2g-2}|}{2g-2}\frac{1}{(2g-2)!}+ 
\sum_{d\geq 1}q^d\lan\;\ran_{g,0,d}^{\ke}\right)= \\
\nonumber & \frac{(-1)^g}{2}\chi_{\mathrm{top}}(X)\frac{|B_{2g}|}{2g}\frac{|B_{2g-2}|}{2g-2}\frac{1}{(2g-2)!}
+\sum_{d\geq 1} q^d\exp\left(\int_{d[line]}\frac{J_1^\ke}{J_0^\ke}\right) \lan\;\ran_{g,0,d}^\infty.\end{align*}
\end{Cor}

\subsection{Relation with Mirror Symmetry} \label{mirror symm}
In this subsection we let $X$ be the quintic hypersurface in $\PP^4$ and consider the asymptotic stability condition $\ke=0+$. (The same analysis will apply to the $(0+)$-theory of any Calabi-Yau threefold for which Conjecture \ref{MainConj} holds.) 

Fix a genus $g\geq 1$. In their landmark paper \cite{BCOV}, Bershadsky, Cecotti, Ooguri, and Vafa studied the string theory $B$-model of a Calabi-Yau threefold and in particular they proposed a method to calculate the
genus $g$ Gromov-Witten potential of the quintic (with no insertions)
via Mirror Symmetry. Namely, let $\cF^B_g(q)$ be the holomorphic limit of the genus $g$ partition function of the $B$-model associated to the {\it mirror family} of the quintic,
where $q$ is the coordinate around the large complex structure point. Let the mirror map be $Q=q\exp(\frac{1}{H}\frac{I_1(q)}{I_0(q)})$, where

\begin{equation*}
I_0(q)=1+\sum_{d\geq 1}q^d\frac{(5d)!}{d!^5},\;\;\;\; I_1(q)=H \sum_{d\geq 1}q^d \frac{(5d)!}{(d!)^5}\left(\sum_{j=d+1}^{5d}\frac{1}{j}\right).
\end{equation*}
Then the genus $g\geq 2$ Mirror Conjecture of \cite{BCOV} for the quintic threefold is the equality
\begin{equation}\label{BCOV}
I_0(q)^{2g-2}\cF^B_g(q)=\sum_{d\geq 0}Q^d\lan\;\ran_{g,0,d}^\infty.
\end{equation}
Hence Corollary \ref{qmap BCOV} says precisely that the quasimap partition function $F_g^{0+}|_{{\bf t}=0}(q)$ is {\it equal} to $\cF^B_g(q)$,
with no mirror map involved. Similarly, Corollary \ref{genus 1 BCOV} gives the same equality in genus $g=1$. In other words, our results in this paper can be viewed as giving a mathematically rigorous and geometrically meaningful definition of the holomorphic limit of the $B$-model partition function.

The $B$-model partition function of the mirror quintic has been studied extensively in the Physics literature. It is expected to have 
modular properties and to satisfy a recursion in $g$, determined up to a holomorphic function $f_g(q)$, the so-called ``holomorphic ambiguity".
The ambiguity has been fixed up to genus $g=51$ in \cite{Klemm et al} and this is by far the most efficient computational method 
for predicting (via the conjectural mirror formula \eqref{BCOV}) the higher genus Gromov-Witten invariants of the quintic.
We speculate that the holomorphic ambiguity $f_g(q)$ has an intrinsic meaning in quasimap theory. It would be very interesting to determine if this is indeed the case.

\subsection{Final remarks} While the proof of Theorem \ref{Main} we give here is quite involved, it turns out to be also robust. For example, it extends easily to the case of complete intersections in products of projective spaces. It also applies to proving a wall-crossing formula for the virtual classes of quasimap moduli spaces (with same stability parameter $\ke=0+$ and target a complete intersection 
$X\subset \prod\PP^{n_i}$) when one usual marking is changed to an infinitesimally weighted marking. To keep this paper from becoming excessively long, we defer the details of these developments to future writings.

\subsection{Acknowledgments}

I.C.-F. was partially supported by the NSF grants DMS-1305004 and DMS-1601771. B.K. is supported by 
the KIAS individual grant MG016403.
In addition, I.C-F. thanks KIAS for financial support, excellent
working conditions, and an inspiring research environment during visits when a large part of this project was completed.
We deeply thanks the anonymous referee for valuable suggestions to improve the readability of the paper.

\section{Virtual classes for moduli of quasimaps}\label{virtual classes}

\subsection{Overview}  In this section we give a concrete description of the virtual class of a moduli space of quasimaps to a complete intersection in projective space. This is accomplished by embedding the moduli space into a smooth stack and intersecting the normal cone for this embedding with the zero section of an appropriate vector bundle. This description will be crucially used in the proof of Theorem \ref{Main} given in section
 \ref{proof of Main Thm}.
The construction is uniform for all discrete parameters $g,k,d$ and $\ke$, but requires the existence of the moduli space of stable curves, so it doesn't apply directly to the unpointed elliptic case $(g,k)=(1,0)$. An appropriate modification, sufficient for completing the proof of Theorem \ref{Main} in this case as well, will be discussed in \S\ref{elliptic}.

\subsection{Set-up and conventions}
From now on we let $\G = \CC^*$. 
Let $V$ be an $n+1$-dimensional $\G$-representation ($n\geq 1$), with weight vector $(1, ..., 1)$. Let
$\CC^r_{\vec{l}}$ be an $r$-dimensional $\G$-representation with positive weight vector $\vec{l}:=(l_1, ..., l_r)$ ($l_j>0, \forall j$). Assume we are given 
a $\G$-equivariant map 
\[ \varphi = \oplus_{i=1}^r \varphi _i : V \ra \CC ^r_{\vec{l}} \]
such that  the  closed subscheme $W:=\varphi ^{-1}(0)$ is smooth away from $0\in V$
and of dimension
$\dim W = n+1-r>0$. We linearize the $\G$ action on $V$ by the character $\theta$ of weight $1$.
The GIT quotient $X:=\WmodtG$ is a nonsingular complete intersection of type $(l_1, ..., l_r)$ in
$\PP^n=V/\!\!/_{\theta}\G$, with $\varphi_i$ its homogeneous equations.

Recall that the inclusion $i:X\subset\PP(V)$ induces an embedding
$$i: Q^\ke_{g, k}(X, d)\hookrightarrow Q^{\ke} _{g, k}(\PP (V),d)$$
for all $\ke\geq 0+$.

We also make the following conventions:

\begin{itemize}

\item $\overline{M}_{g, k}$ denotes the Deligne-Mumford stack of $k$-pointed stable curves of genus $g$, while
$\fM _{g, k}$ denotes the Artin stack of prestable $k$-pointed curves of genus $g$.

\item $\Bun$  denotes the moduli stack of principal $\G$-bundles on $k$-pointed
prestable curves of genus $g$. It is a smooth Artin stack of pure dimension and decomposes as $\coprod_{d\in \ZZ}\Bund$, according to the degrees of the principal bundles.
There are natural forgetful morphisms $$Q^{\ke} _{g, k}(\PP (V),d)\lra \Bund\lra\fM _{g, k}.$$

\item The universal families of curves on various moduli stacks are denoted by $\fC$, usually with decorations recording the discrete data. For example,
\[ \xymatrix{ \fC ^{\ke}_{g, k, d} \ar[r]\ar[d]  & \fC _{g, k}  \ar[d] & \ar[l] \fC ^{\ke'}_{g, k, d'}\ar[d] \\
                     Q^\ke_{g, k}(X, d) \ar[r] & \overline{M}_{g, k}  & \ar[l] Q^{\ke'} _{g, k}(\PP (V\ot \CC ^N), d'). } \]
                    We will abuse notation and denote always by $\pi$ the projection from the universal curve to the base.

\end{itemize}

We will represent quasimaps to a projective space $\PP(V)$ as tuples
$$((C,p_1,\dots, p_k),L,u)$$
with $L$ a line bundle on $C$ and $u$ a section of $L\ot V$ (as in \cite{CK}). 
Quasimaps to $X\subset\PP(V)$ will then be such tuples for which the components 
$u_1,\dots, u_{\dim V}$ of $u$ (once a basis of $V$ is chosen) satisfy the homogeneous equations of $X$. The base-points of the quasimap are the points of $C$ where all the $u_i$'s vanish and the length $\ell(x)$  at a point $x\in C$ is the common order of vanishing. Given $ \ke\in \QQ_{>0}$, recall that the definition of $\ke$-stability requires the following conditions be satisfied:
\begin{enumerate}
\item[(1)] the base-points are away from nodes and markings;
\item[(2)]  $\ke\ell(x)\leq 1$ for all $x\in C$;
\item[(3)] the line bundle $\omega_C(p_1+\dots +p_k)\ot L^\ke$ is ample.
\end{enumerate}
For $\ke=0+$ condition $(2)$ is empty and is discarded, while condition $(3)$ translates into the absence of rational tails in $C$ and the strict positivity of $\deg L$ on rational bridges (rational components of $C$ containing exactly two special points).

Finally, recall that the theory of virtual classes was first developed by Li and Tian in \cite{LT}, and by Behrend and Fantechi in \cite{BF}. In this paper we use the formalism of \cite{BF}.

\subsection{Twisting line bundles}\label{twisting lb}

Fix $(g,k)\neq(1,0)$. 

For each $\ke\geq 0+$ we construct a line bundle $\sM_\ke$ on the universal curve
$$\fC ^\ke_{g, k, d}\lra Q^{\ke} _{g, k}(\PP (V),d)$$ as follows.

When $g=0$, we take the trivial line
bundle $\sM_{\ke}=\cO$.

When $g\geq 1$ and $g+k\geq 2$, the moduli stack $\overline{M}_{g, k}$ exists and we have the
diagram 
\[ \xymatrix{ \fC ^{\ke}_{g, k, d} \ar[r]^{\widetilde{ft_\ke}}\ar[d]_{\pi}  & \fC _{g, k}  \ar[d]^{\pi}\\
                    Q^\ke_{g, k}(X, d) \ar[r]^{ft_\ke} & \overline{M}_{g, k} \ar@/^1pc/[u]^{\Sigma_i}    } \]
with $ft_{\ke}, \widetilde{ft_\ke}$ the stabilisation morphisms and
$\Sigma_i$ the sections of $\pi$ corresponding to the $k$ markings. The logarithmic relative dualising sheaf 
$\omega_{log}:=\omega_{\pi}(\Sigma_1+\dots\Sigma_k)$ on $\fC _{g, k}$
is $\pi$-ample and we choose a positive integer $p$ such that $\omega_{log}^{ \ot p}$ is $\pi$-relatively very ample. We also choose a very ample line bundle on the (projective!) coarse moduli of 
$\overline{M}_{g, k}$ and denote by $\mathscr{H}$ its pull-back to the stack $\overline{M}_{g, k}$. Now set
$$\sM_\ke:=  \widetilde{ft_\ke}^*(\pi^*\mathscr{H}\ot \omega_{log}^{ \ot p}).$$

\begin{Lemma}\label{easy lemma} The line bundles $\sM_\ke$ satisfy the following properties:
\begin{enumerate}
\item[(i)] If $\ke>\ke'$, then $\sM_{\ke}=\tilde{c}^*\sM_{\ke'}$, where $\tilde{c}$ is the induced contraction morphism on universal curves in the diagram
\[ \xymatrix{ \fC ^{\ke}_{g, k, d} \ar[d]    \ar[r]^{\tilde{c}} &\fC ^{\ke '}_{g, k, d}\ar[d] \\
                     Q^\ke_{g, k}(\PP(V), d)  \ar[r]^c  & Q^{\ke'} _{g, k}(\PP (V), d) } \]

\item[(ii)] For every geometric fiber $C$ of $\fC ^\ke_{g, k, d}\ra Q^{\ke} _{g, k}(\PP (V),d)$ we have
$$H^1(C,\sL\ot\sM_\ke|_C)=0,$$
where $\sL$ denotes the universal line bundle associated to the universal principal $\G$-bundle
on the universal curve.

\end{enumerate}
\end{Lemma}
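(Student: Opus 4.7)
For part (i), I would show that the stabilization morphism factorizes through the contraction: $\widetilde{ft_\ke}=\widetilde{ft_{\ke'}}\circ\tilde c$. The point is that the rational tails of degree $1/\ke_0$ collapsed by $\tilde c$ are precisely $\PP^1$-components carrying a base point and a single node, with no markings (markings are forbidden near base points by $\ke$-stability). Viewed as pointed prestable curves, such components have only one special point and are therefore already contracted by the stabilization map $\widetilde{ft_\ke}$; the two sides agree because $\widetilde{ft_{\ke'}}$ contracts exactly the remaining unstable components of $\fC^{\ke'}_{g,k,d}$ that were not touched by $\tilde c$. Pulling back $\pi^*\mathscr{A}\ot\omega_{log}^{p}$ via this factorization then yields the compatibility $\sM_\ke\cong\tilde c^*\sM_{\ke'}$ asserted in (i).

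For part (ii), I would reduce to a degree computation on each irreducible component of a geometric fiber $C$ and then invoke a standard vanishing criterion for line bundles on nodal curves. Decompose $C=C^{st}\cup\bigcup_i T_i$, where $C^{st}$ consists of the components not contracted by $\widetilde{ft_\ke}$ and the $T_i$ are the contracted rational tails. On each $T_i\cong\PP^1$, the factor $\sM_\ke|_{T_i}$ is trivial (it is pulled back from a point) and $\ke$-stability forces $\deg\sL|_{T_i}>1/\ke$, so $\sL\ot\sM_\ke|_{T_i}$ is positive on $\PP^1$ with vanishing $H^1$. On each component $T'$ of $C^{st}$ we have $\deg\sM_\ke|_{T'}=p\cdot\deg(\omega_{log}|_T)\geq p$, where $T$ is the image of $T'$ in the stable model, while $\deg\sL|_{T'}\geq 0$; hence choosing $p$ sufficiently large yields $\deg(\sL\ot\sM_\ke|_{T'})\geq 2g_{T'}$. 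The vanishing of $H^1(C,\sL\ot\sM_\ke|_C)$ now follows from the normalization short exact sequence $0\to\sL\ot\sM_\ke|_C\to\nu_*\nu^*(\sL\ot\sM_\ke|_C)\to\bigoplus_{\text{nodes}}(\sL\ot\sM_\ke|_C)_{\text{node}}\to 0$: the long exact sequence in cohomology gives $H^1=0$ once the pullback to the normalization has no $H^1$ (ensured by $\deg\geq 2g_T-1$ on each component) and the evaluation of global sections at nodes is surjective (ensured by $\deg\geq 2g_T$ on each component).

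The main obstacle is that $p$ must be chosen \emph{uniformly} so that the degree inequalities above hold on every geometric fiber simultaneously. This is where I would use that $Q^\ke_{g,k}(\PP(V),d)$ is a proper Deligne-Mumford stack of finite type: the combinatorial types of its fibers and the geometric genera of their components are bounded, so a single integer $p$, depending only on the discrete data $(g,k,d,\ke)$, suffices for all fibers at once, and such a $p$ can be built into the definition of $\sM_\ke$.
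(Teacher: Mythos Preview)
Your argument for part (i) is correct and is exactly the paper's argument, just spelled out in more detail: the stabilization to $\overline{M}_{g,k}$ factors through the contraction $\tilde c$, so the pullbacks of $\pi^*\mathscr{A}\otimes\omega_{log}^p$ agree.

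For part (ii), your approach via the normalization sequence is different from the paper's (the paper argues more globally, using that $\sM_\ke$ already has vanishing $H^1$ on the stabilization $C^{st}$ and is trivial on the contracted components), and it has a genuine gap. The claim ``evaluation of global sections at nodes is surjective (ensured by $\deg\geq 2g_T$ on each component)'' is false. A counterexample: take $C$ a cycle of two smooth genus~$1$ curves meeting at two nodes, with $L$ of degree $2$ on each component. Then $\deg L|_T = 2 = 2g_T$ on each component, yet $\omega_C\otimes L^{-1}$ has degree $0$ on each component and can have a nonzero global section, so $H^1(C,L)\neq 0$ by Serre duality. The correct sufficient condition is $\deg L|_T > \deg\omega_C|_T = 2g_T-2+n_T$ on every component $T$, where $n_T$ is the number of nodes of $C$ on $T$; this makes $\omega_C\otimes L^{-1}$ negative on every component and forces $H^1(C,L)=0$ directly. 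Two further points: your decomposition should include rational \emph{bridges} as well as rational tails among the contracted $T_i$ (both are collapsed by $\widetilde{ft_\ke}$, and on both $\sM_\ke$ is trivial while $\deg\sL\geq 1$); and with the corrected criterion, on a non-contracted component $T'$ carrying many attached tails you need $\deg(\sL\otimes\sM_\ke)|_{T'}\geq 2g_{T'}-1+n_{T'}$ with $n_{T'}$ potentially as large as $d$, so your $p$ genuinely must grow with $d$. This is consistent with what you say at the end, but note that the paper's construction fixes $p$ depending only on $(g,k)$; its argument for (ii) avoids the component-by-component bound by using the global $H^1$-vanishing on $C^{st}$ instead.
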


\begin{proof} Part $(i)$ is obvious from the definition, since $C$ and $\tilde{c}$ are compatible with the forgetful stabilisation maps. 
For part $(ii)$, notice that $\deg \sL$ is nonnegative on every component of every geometric fiber $C$ and by stability it is strictly positive on every rational component with at most two special points. On the other hand, by construction $\sM_\ke$ has vanishing $H^1$ on the stabilization of  $C$ and is trivial on rational tails and rational bridges.
The required vanishing follows.
\end{proof}

Choose once and for all global sections $\{\tau_1,\dots ,\tau_N\}$  giving a basis of $\Gamma(\fC _{g, k},\pi^*\mathscr{H}\ot \omega_{log}^{ \ot p})$, and hence an embedding
$$h:\fC _{g, k}\lra\PP(\CC^N).$$
Let $s_j^\ke:=\widetilde{ft_\ke}^*\tau_j$ of $\sM_\ke$ be the induced sections of $\sM_\ke$, determining the map $h_\ke :=h\circ\widetilde{ft_\ke}$, with $\sM_\ke=h_\ke^*\cO_{\PP(\CC^N)}(1)$.
When the parameter $\ke$ is understood we will drop it from the notation and write simply $\sM$ and $s_j$ for the twisting line bundle and its sections. Furthermore,
we will use the same notations when considering the restriction of the set-up in this subsection to the moduli spaces 
$Q^\ke_{g,k}(X,d)$ via the embedding $i$.

Note that the degree of $\sM$ on the fibers of the universal curve is a constant positive integer $d_{\sM}$ depending only on $(g,k)$, but not on
$d$, or on the dimension of $\PP(V)$.

\subsection{Perfect obstruction theory of \texorpdfstring{$Q^{\varepsilon}_{g,k}(X,d)$}{Lg}}\label{POT}

Fix $(g,k)\neq(1,0)$ and $\varepsilon \geq 0+$. 
Consider the line bundle $\sL ' := \sL \ot \sM$ on the universal curve $\fC ^{\varepsilon}_{g, k, d}$ over 
$Q^{\varepsilon}_{g,k}(X,d)$.
There is a commuting diagram with exact rows

\begin{equation}\label{comp} \xymatrix{ 0 \ar[r]   & \mathscr{L}\ot V \ar[r]^-{\oplus _j s_j}  \ar[d]_{\oplus_id\varphi_i} & 
 \oplus _{j=1}^N \mathscr{L}' \ot V  \ar[d]^{\oplus _{i, j} s_j ^{l_i-1} d\varphi _i} 
 \ar[r]^-{\alpha_0}     & \mathscr{P}  \ar[r]          \ar[d]^{f}           & 0 \\
                      0 \ar[r]  & \oplus _{i=1}^r \sL  ^{l_i}  \ar[r]^-{\oplus _{i, j} s_j^{l_i} }     
&  \oplus _{i, j}  ( \sL')  ^{l_i}  \ar[r]_-{\alpha_1}                              & \mathscr{Q}  \ar[r]              & 0 .} \end{equation}
The top row is obtained by puling-back the tautological sequence 
 \begin{equation}\label{tautological}
 0\lra \cO_{\PP(\CC^N)}(-1)\lra \cO_{\PP(\CC^N)}\ot \CC^N\lra Q\lra 0
 \end{equation} 
 via $h_\ke:\fC ^{\ke}_{g, k, d}\lra \PP(\CC^N)$ 
and tensoring with $\mathscr{L}'\ot V$. 
The bottom row comes from \eqref{tautological} similarly,
 by taking the direct sum of its pull-backs via $g_{l_i}\circ h_\ke$,  tensored with 
 $(\sL')^{l_i}$, where $g_{l_i}:\PP(\CC^N)\lra\PP(\CC^N)$ is the degree $l_i$ map
 $[t_1:\dots : t_N]\mapsto [t_1^{l_i}:\dots : t_N^{l_i}]$.
In particular, $\mathscr{P}$ and $\mathscr{Q}$ are vector bundles. 

The components $d\varphi_i$ of the vertical homomorphism on the left are given as follows. Let $\Delta\subset \fC ^{\ke}_{g, k, d}$ be an open substack. After choosing coordinates $(x_0,\dots x_n)$ on $V$, we may write $\varphi_i$ as a homogeneous polynomial of degree $l_i$  and a local section $v$ of $\mathscr{L}\ot V$ on $\Delta$ as $v=(v_0,\dots v_n)$. Then we put 
 $$d\varphi_i(v)=\nabla\varphi_i(u|_\Delta)\cdot v=\sum_{m=0}^n\frac{\partial \varphi_i}{\partial x_m}(u|_\Delta)v_m,$$
 where $u=(u_0,\dots ,u_n)$ is the universal section of $\sL\ot V$ on $\fC ^{\ke}_{g, k}$.
 Similarly, for fixed $i$ and $j$ and a local section $v'=(v_0',\dots v_n')$ of $\mathscr{L}'\ot V$,
 $$s_j ^{l_i-1} d\varphi _i(v')=\sum_{m=0}^n \frac{\partial \varphi_i}{\partial x_m}(u\ot s_j|_\Delta)v_m'
 =\sum_{m=0}^n s_j ^{l_i-1}|_\Delta\frac{\partial \varphi_i}{\partial x_m}(u|_\Delta)v_m'.$$

 Viewing \eqref{comp} as an exact sequence of two-term complexes, it follows that
 the two-term vertical complex on the left in \eqref{comp} is quasi-isomorphic 
to the shifted mapping cone $A^\bullet:=\mathrm{Cone}(\alpha)[-1]$ of the homomorphism $\alpha=(\alpha_0,\alpha_1)$.
Denote \[  \sR := \oplus _{i, j} (\sL ')  ^{l_i} .\]
 Define a coherent sheaf $\mathscr{E}$ (in fact, a vector bundle) by the exact sequence
\begin{align} \label{sW} 0 \ra\mathscr{E}   \ra \mathscr{P} \oplus \sR \ra\mathscr{Q}  \ra 0, \end{align}
where $\mathscr{P} \oplus \sR \ra\mathscr{Q}$ is given
by $(x,y)\mapsto f(x)-\alpha_1(y)$.
Then $A^\bullet$ is quasi-isomorphic to 
\begin{align}\label{preperfect}  \oplus _{j=1}^N \mathscr{L}'\ot V  \ra\mathscr{E}. \end{align}

On the other hand, if $$\rho : Prin(\sL) \times _\G W \ra  \fC^\ke_{g,k,d} $$ denotes the universal $W$-fiber bundle with $Prin (\sL)$ the principal $\G$-bundle associated to $\sL$ 
and we view $u$ as the universal section of $\rho$, 
then the pull-back $u^*\mathbb{T}_\rho$ of the relative tangent complex of $\rho$ coincides with the two-term complex $\sL\ot V\ra \oplus _{i=1}^r \sL  ^{l_i}$ on the left of \eqref{comp}. We conclude that $u^*\mathbb{T}_\rho$ is quasi-isomorphic to \eqref{preperfect}
at amplitude $[0,1]$.

Part $(ii)$ of Lemma \ref{easy lemma} gives the vanishing $R^1\pi _* \sL ' =0$. 
This in turn implies that $R^1\pi _* \mathscr{P}  = R^1 \pi _* \mathscr{Q}  =0$. 
 Since the derived push-forward of $u^*\mathbb{T}_\rho$ has   amplitude in $[0,1]$ by \cite[Theorem 4.5.2]{CKM}, the same is true for the derived push-forward of the shifted mapping cone $A^\bullet$. Hence the
map $\pi_* (\mathscr{P} \oplus \sR ) \to \pi_*\mathscr{Q} $ is surjective 
and then
$R^1\pi _*\mathscr{E} =0 $ from \eqref{sW}. It follows that 
\begin{equation}\label{E1 term}
{E}_d^\ke:= \pi _* \mathscr{E}
\end{equation} is a locally free sheaf on $\Qked$ and
we obtain
a perfect complex 
\begin{align}\label{perfect} \oplus _{j=1}^N  \pi _* \mathscr{L}' \ot V   \ra E_d^\ke, \end{align}
whose dual represents the canonical perfect obstruction theory 
$$(R^\bullet \pi _* u^* \mathbb{T}_\rho)^\vee$$ for $\Qked$
relative to $\fB un _{\G}^{g, k}$. We have proved the following result.

\begin{Prop}\label{virtual class} The virtual fundamental class of $\Qke$ is
\[       [\Qked ]^{\vir} = 0_{E_d^\ke}^! ([\mathbf{C}_\ke]) \] 
where $\mathbf{C}_\ke\subset E_d^\ke$ denotes the Behrend-Fantechi obstruction cone,  see \cite{BF}, associated to the relative perfect obstruction theory given by \eqref{perfect}.
\end{Prop}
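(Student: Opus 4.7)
The plan is to package the explicit construction in \eqref{comp}--\eqref{perfect} into the Behrend--Fantechi cone-intersection formula. Most of the work is already carried out above; what remains is to name the pieces and invoke the standard formalism.

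First, I appeal to \cite[Theorem 4.5.2]{CKM}, which provides a canonical perfect obstruction theory of $\Qked$ relative to $\BunG$ whose dual is $R\pi_* u^*\mathbb{T}_\rho$. Because $W=\varphi^{-1}(0)\subset V$ is a complete intersection, the pullback $u^*\mathbb{T}_\rho$ of the relative tangent complex is quasi-isomorphic to the two-term complex $\bigl[\sL\otimes V\to\oplus_i\sL^{l_i}\bigr]$ sitting in amplitude $[0,1]$, i.e.\ the leftmost column of \eqref{comp}.

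Next, I exploit the diagram \eqref{comp}. Its two rows are pullbacks of the tautological exact sequence on $\PP(\CC^N)$ (along $h_\ke$ and $g_{l_i}\circ h_\ke$, twisted appropriately), hence short exact; the diagram commutes by linearity of the maps $v\mapsto d\varphi_i(v)$ in the local section $v$. Viewing \eqref{comp} as a short exact sequence of vertical two-term complexes, the leftmost column is quasi-isomorphic to the shifted mapping cone $A^\bullet = \mathrm{Cone}(\alpha)[-1]$, and in turn to $[\oplus_j\sL'\otimes V\to\sW_d]$ via \eqref{sW}. Applying $\pi_*$ and using Lemma \ref{easy lemma}(ii) to get $R^1\pi_*\sL'=0$ (hence $R^1\pi_*\sV_d=R^1\pi_*\sK_d=0$), together with the amplitude bound on $R\pi_* u^*\mathbb{T}_\rho$ from \cite[Theorem 4.5.2]{CKM}, I conclude $R^1\pi_*\sW_d=0$ via \eqref{sW}. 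Thus $E_d^\ke=\pi_*\sW_d$ is locally free and \eqref{perfect} is a genuine two-term complex of vector bundles representing the relative perfect obstruction theory.

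Finally, by the Behrend--Fantechi formalism \cite{BF}, the intrinsic normal cone of $\Qked\to\BunG$ embeds as a closed subcone $\mathbf{C}_\ke\subset E_d^\ke$, and the relative virtual class is defined to be $0^!_{E_d^\ke}[\mathbf{C}_\ke]$. Since $\BunG$ is a smooth equidimensional Artin stack, this relative virtual class agrees with the absolute $[\Qked]^{\vir}$, yielding the formula. The main technical point is the chain of $R^1\pi_*$-vanishings that secures local freeness of $E_d^\ke$; the diagram \eqref{comp} is precisely engineered so that these vanishings reduce to the single input $R^1\pi_*\sL'=0$ furnished by Lemma \ref{easy lemma}(ii).
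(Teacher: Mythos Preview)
Your proposal is correct and follows essentially the same approach as the paper: the discussion preceding the Proposition already establishes that \eqref{perfect} is a two-term complex of vector bundles representing the relative perfect obstruction theory (via the chain of $R^1\pi_*$-vanishings you outline), and the Proposition is then just the statement of the Behrend--Fantechi cone-intersection formula applied to this presentation. Your write-up makes the final invocation of \cite{BF} and the role of smoothness of $\BunG$ slightly more explicit than the paper does, but the argument is the same.
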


\subsection{An embedding of \texorpdfstring{$Q^\ke_{g,k}(X,d)$}{Lg} into a smooth stack}\label{U-embedding}

Set $$d' :=d+d_{\sM}= d + \deg ( \sM|_C).$$ 
Consider the moduli stack $Q^{\ke}_{g, k} (\PP (V\ot \CC ^N), d')$, with universal curve
$\fC ^{\ke}_{g, k, d'} $.
By a slight abuse, denote also by $\sM $ the twisting line bundle on 
$\fC ^{\ke}_{g, k, d'}$ (defined by the construction in \S\ref{twisting lb}, as the pull-back of $\pi^*\mathscr{H} \ot\omega_{log}^{\ot p}$ on $\fC_{g,k}$ by the stabilization morphism).

\begin{Def} 
Define 
$ U_{d'}^\ke  \subset Q^\ke _{g, k}(\PP (V\ot \CC ^N), d') $
as the open substack consisting of the $\ke$-stable quasimaps 
$$((C, p_1, ..., p_k), L', u')$$ to $\PP (V\ot \CC ^N)$
such that  
$H^1(C, L')=0$.
\end{Def}

Note that $U_{d'}^{\ke}$ is the complement of the support 
of the coherent sheaf $R^1\pi _* \sL '$, so it is indeed an open substack.

\begin{Lemma}\label{Con}
The stack $U_{d'}^\ke$ is a separated DM-stack of finite type, smooth and of pure dimension over
 $\Bun$, and hence over $\fM _{g, k}$.
In particular, fixing a locally-closed substack of $\Bun$ parametrizing prestable curves with fixed topological type, together with line bundles of given degrees on the components, produces a corresponding locally-closed substack of $U_{d'}^\ke$ with the same codimension.
\end{Lemma}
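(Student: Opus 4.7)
The plan is to present $U_{d,d'}^\ke$ as an open substack of (the total space of) a vector bundle over an open substack of $\Bun$. Separatedness, finite-type, and the DM property are essentially free: $U_{d,d'}^\ke$ is by construction an open substack of $Q^\ke_{g,k}(\PP(V\otimes\CC^N), d')$, which is a proper DM stack of finite type over $\CC$ since $\PP(V\otimes\CC^N)$ is projective (as recalled in the introduction; see \cite{CKM}).

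For smoothness and pure relative dimension over $\Bun$, I would first observe that the forgetful morphism $U_{d,d'}^\ke \to \Bun$ factors through the degree $d'$ component of $\Bun$, by sending $((C,p_\bullet), L', u')$ to $((C,p_\bullet), L')$. The vanishing $H^1(C, L')=0$ is an open condition (the complement of the support of $R^1\pi_*\sL'$), so it cuts out an open substack of $\Bun$, through which the forgetful morphism factors by Definition \ref{Tw_Def}. Cohomology and base change then gives that $\pi_*(\sL'\otimes(V\otimes\CC^N))$ is a vector bundle $\mathcal{E}$ on this open substack, of rank $(n+1)N(d'+1-g)$ by Riemann-Roch, and the total space $\mathrm{Tot}(\mathcal{E})$ parametrizes triples $((C,p_\bullet), L', u')$ with $u'$ an arbitrary section of $L'\otimes V\otimes\CC^N$, no stability imposed.

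The main step, and the place where I expect the only technical content, is to identify $U_{d,d'}^\ke$ with an open substack of $\mathrm{Tot}(\mathcal{E})$. Each of the three clauses of $\ke$-stability --- base-points disjoint from nodes and markings, the length bound $\ke\ell(x)\leq 1$, and ampleness of $\omega_{log}\otimes(L')^\ke$ --- cuts out an open subset of $\mathrm{Tot}(\mathcal{E})$; ampleness in particular is a numerical condition constant on the topological strata of $\Bun$ and so automatically open along fibers. Once this is verified, the morphism $U_{d,d'}^\ke \to \Bun$ is the composition of an open immersion with a vector bundle projection over the smooth Artin stack $\Bun$ (which itself has pure relative dimension $g-1$ over $\fM_{g,k}$), hence smooth of pure relative dimension. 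The final codimension assertion is then a standard consequence of smoothness of the morphism to $\Bun$.
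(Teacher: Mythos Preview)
Your proposal is correct and is essentially the same argument as the paper's, made explicit. The paper compresses your vector-bundle construction into the single phrase ``the quasimaps in $U_{d,d'}^\ke$ are unobstructed'': the relative obstruction theory of $Q^\ke_{g,k}(\PP(V\otimes\CC^N),d')$ over $\Bun$ is $(R^\bullet\pi_*(\sL'\otimes V\otimes\CC^N))^\vee$, and on the open locus where $R^1\pi_*$ vanishes this collapses to a locally free sheaf in degree zero---precisely your $\mathcal{E}^\vee$---so the morphism to $\Bun$ is smooth of relative dimension $\rk\mathcal{E}$.
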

\begin{proof}  The separatedness and finite type properties follow from the corresponding ones for
$Q^{\ke}_{g, k} (\PP (V\ot \CC ^N), d')$. By definition, the quasimaps in $U_{d'}^\ke$ are unobstructed, which gives the smoothness and the pure dimensionality. (In fact, $U_{d'}^\ke$ is also irreducible, since it is
the smooth locus in the ``main component" of $Q^\ke _{g, k}(\PP (V\ot \CC ^N), d')$. Irreducibility of 
the ``main component" follows from the connectedness of $\overline{M}_{g, k}(\PP (V\ot \CC^N), d')$, proven in \cite{KP}.)
\end{proof}

Let $\pi :\fC ^{\ke}_{g, k, d'} \ra U_{d'}^\ke$ be the universal curve and let $\sL'$  be the universal line bundle of $\pi$-relative degree $d'$ 
on $\fC ^{\ke}_{g, k, d'}$. By the very definition of $U^\ke_{d'}$, the sheaf $\pi_*\sL'$ is locally free.
Put $$\sL := \sL ' \ot \sM ^{-1},$$  
and consider the diagram of vector bundles on $\fC ^{\ke}_{g, k, d'}$
\begin{equation}\label{twisting} \xymatrix{ 0 \ar[r]   &\mathscr{L} \ot V \ar[r]^{\oplus _j s_j\;\;\;\;}   &  \oplus _{j=1}^N  \mathscr{L}' \ot V \ar[r]\ar[d]^{\oplus_j(\oplus_id\varphi_i)}        & \mathscr{P}^{\ke}_{d'}  \ar[r]                 & 0 \\
                      0 \ar[r]  & \oplus _{i=1}^r  \sL  ^{l_i}  \ar[r]^{\oplus _{i, j} s_j^{l_i}}       
& \oplus _{i, j}   (\sL ') ^{l_i} \ar[r]                              & \mathscr{Q}^{\ke}_{d'}  \ar[r]              & 0 . } \end{equation}
As before, the exact
rows are obtained from the tautological exact sequence \eqref{tautological} on $\PP(\CC^N)$ 
via pull-backs, tensoring with appropriate line bundles, and taking direct sums.
The components of the map between the middle terms (for fixed $i$ and $j$) are given by
 $$d\varphi _i(v'_{j0},\dots v'_{jn})=\sum_{m=0}^n \frac{\partial \varphi_i}{\partial x_m}((u'_{j0},\dots u'_{jn})|_\Delta)v_{jm}',$$
 where 
  \begin{equation}\label{eqn:def u'}
  u'=(u'_{10},\dots , u'_{1n},u'_{20},\dots , u'_{2n},\dots, u'_{N0},\dots, u'_{Nn})
  \end{equation} is the universal global section of $\oplus _{j=1}^N  \mathscr{L}' \ot V$ on $\fC ^{\ke}_{g, k, d'}$ and
 $(v'_{10},\dots , v'_{1n},\dots, v'_{N0},\dots, v'_{Nn})$ is a local section
 of $\oplus _{j=1}^N  \mathscr{L}' \ot V$ over an open $\Delta\subset\fC ^{\ke}_{g, k, d'}$.

Let us denote \[ \sA^{\ke}_{d'} := \oplus _{j=1}^N \sL ' \ot V, \ \ \sR^{\ke}_{i, d'} :=  \oplus _{j=1}^N  (\sL ' )^{l_i},   \ \ \sR^{\ke}_{d'} :=  \oplus _{i=1}^r \sR^{\ke}_{i, d'}  .\]
The tautological section  $\tau^\ke$ of $\pi_*\sA^{\ke}_{d'}$ induces a natural section 
$\sigma^\ke_P$ of the vector bundle
$$P^\ke_{d'}:= \pi_*\sP ^{\ke}_{d'}$$ on $U_{d'}^{\ke}$.
On the other hand, we also have the section $\sigma^\ke_R$ of the vector bundle 
$$R^\ke_{d'}:= \pi _* \sR^{\ke}_{d'} $$ whose $(i,j)$-component is given by 
$\varphi_i(u'_{j0},\dots,u'_{jn})$. Set
\begin{equation}\label{sigma epsilon}
\sigma^\ke:=(\sigma^\ke_P,\sigma^\ke_R)\in H^0(U_{d'}^{\ke}, P^\ke_{d'}\oplus R^\ke_{d'}).
\end{equation}
Because the exactness of the rows of \eqref{twisting} is preserved for any base change, it follows immediately that the zero locus of 
the section $\sigma^{\ke}$ is identified with the stack  $Q^\ke_{g, k}(X, d)$. 
Thus, we have an explicit embedding of $Q^\ke_{g, k}(X, d)$ in the smooth stack $U_{d'}^{\ke}$, summarized in the diagram

\begin{equation}\label{embedding diagram}
 \xymatrix{                                                          &  P^\ke_{d'}\oplus R^\ke_{d'} \ar[d]                \\
Q^\ke_{g, k}(X, d) \cong (\sigma^{\ke})^{-1}(0)  \ar@{^{(}->}[r]^-{\text{closed} } \ar[rd]  &  U_{d'}^{\ke} \ar[d]^{\text{smooth}} \ar@/_1pc/[u]_{\sigma^\ke}  \\
                                                                                  &           \Bun . } \end{equation}

Over $Q^\ke_{g, k}(X, d) $, the diagram \eqref{twisting} restricts to the diagram \eqref{comp}. 
Denoting by $\mathscr{I}$ the ideal sheaf of the closed substack $Q^\ke_{g, k}(X, d)$ in 
$U^{\ke}_{d'}$ and setting 
\begin{equation}\label{E hat}
F_{d'}^\ke:=  P^\ke_{d'}\oplus R^\ke_{d'} =\pi _*\sP ^{\ke}_{d'} \oplus  \pi _* \sR^{\ke}_{d'},
\end{equation}
we obtain the commuting diagram of coherent sheaves

\begin{equation}\label{embedded obstruction}
\xymatrix { \left( { F}_{d'}^\ke  |_{Q^\ke_{g, k}(X, d) }\right)^\vee \ar@{->>}[rd]_-{(\sigma^\ke)^\vee} \ar[r] & (E _d^\ke )^\vee \ar[d]  \ar[r] 
& \left(    \pi_* \sA^{\ke}_{d'} |_{Q^\ke_{g, k}(X, d) }\right)^\vee   \ar[d]_{=} \\
 &  \sI /\sI ^2\ar[r] &  \Omega _{U_{d'}^{\ke}/\Bun } |_{Q^\ke_{g, k}(X, d) }\;\;\;\;\; ,}
\end{equation}
where the existence of the surjection $(E _d^\ke )^\vee \twoheadrightarrow  \sI /\sI ^2$ follows from a standard deformation theory calculation.

The square in the diagram \eqref{embedded obstruction} is precisely the map of complexes 
from the obstruction theory
\eqref{perfect} to the two-term truncation of the relative cotangent complex 
$\mathbb{L}_{Q^\ke_{g, k}(X, d)/\Bun }$. 
The indicated equality $ (\pi_* \sA ^{\ke}_{d'} )^\vee=\Omega _{U_{d'}^{\ke}/\Bun }$
follows from the definition of $U_{d'}^{\ke}$ and the identification of 
$(R^\bullet \pi_*\sA ^{\ke}_{d'})^\vee$ with the relative obstruction theory
over $\Bun$ for $Q^\ke _{g, k}(\PP (V\ot \CC ^N), d')$, see \cite[\S5.3]{CK}.
Here $\sL '$ denotes, by abusing notation, also the universal line bundle on the universal curve on $Q^\ke _{g, k}(\PP (V\ot \CC ^N), d')$.

\begin{Lemma}\label{normal cone}
 The relative normal cone $\mathbf{C}_{Q^\ke_{g, k}(X, d) / U^{\ke}_{d'}}$ for the embedding in \eqref{embedding diagram} coincides with the obstruction cone $\mathbf{C}_\ke\subset E^\ke _d$.
\end{Lemma}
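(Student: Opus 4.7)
The plan is to identify both cones as pullbacks of the intrinsic normal cone $\mathfrak{C}_{Q/\Bun}$ (setting $Q := Q^\ke_{g,k}(X,d)$ and $U := U^\ke_{d,d'}$) via appropriate smooth quotient maps, and then compare them using the subbundle inclusion $E^\ke_d \hookrightarrow \hat E^\ke_{d'}|_Q$ coming from $\pi_*$ of the defining sequence \eqref{sW} for $\mathscr{W}_d$. Since $U$ is smooth over $\Bun$ by Lemma \ref{Con}, the Behrend--Fantechi identification gives
\[
\mathfrak{C}_{Q/\Bun} = [\mathbf{C}_{Q/U}/T_{U/\Bun}|_Q],
\]
with $T_{U/\Bun}|_Q = \oplus_j \pi_*\mathscr{L}'\otimes V|_Q$ acting on $\hat E^\ke_{d'}|_Q$ by translations along the derivative $d\sigma^\ke$. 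On the other side, by the very definition of the POT-obstruction cone, $\mathbf{C}_\ke$ is the preimage of $\mathfrak{C}_{Q/\Bun}$ in $E^\ke_d$ under the smooth quotient $E^\ke_d \to [E^\ke_d/T_{U/\Bun}|_Q]$, where the $T$-action is the one induced by the POT morphism of diagram \eqref{embedded obstruction}.

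The crucial technical input is that $d\sigma^\ke\colon T_{U/\Bun}|_Q \to \hat E^\ke_{d'}|_Q$ factors through the subbundle $E^\ke_d$, and that this factorization coincides with the POT-induced map $T_{U/\Bun}|_Q \to E^\ke_d$. A direct calculation at a point $((C,p_\bullet), L', u' = u\otimes s)$ of $Q$ gives
\[
d\sigma^\ke(v) = \bigl(\bar v,\; (s_j^{l_i-1} d\varphi_i(u)(v_j))_{i,j}\bigr) \in \pi_*\mathscr{V}_d \oplus \bigoplus_{i,j} \pi_*(\mathscr{L}')^{l_i}
\]
for $v = (v_j)_j$, with $\bar v$ the class of $v$ in $\mathscr{V}_d$. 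The commutativity of the right-hand square of diagram \eqref{comp} yields $f(\bar v) = [s_j^{l_i-1} d\varphi_i(u)(v_j)] = \alpha_1\bigl((s_j^{l_i-1} d\varphi_i(u)(v_j))_{i,j}\bigr)$ in $\mathscr{K}_d$, so $d\sigma^\ke(v)$ lies in the kernel $\mathscr{W}_d$ and hence in $E^\ke_d = \pi_*\mathscr{W}_d$ after $\pi_*$. The agreement with the POT-induced map is immediate: the POT complex \eqref{perfect} is obtained from \eqref{comp} via the mapping-cone construction, and unwinding shows that the arrow in \eqref{preperfect} is precisely $v \mapsto (\bar v,\, (s_j^{l_i-1} d\varphi_i(u)(v_j)))$.

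With this compatibility in hand, we have a commutative square in which the horizontal arrows are the closed embeddings $E^\ke_d \hookrightarrow \hat E^\ke_{d'}|_Q$ and $[E^\ke_d/T_{U/\Bun}|_Q] \hookrightarrow [\hat E^\ke_{d'}|_Q/T_{U/\Bun}|_Q]$, and the vertical arrows are the smooth quotient maps. The POT morphism factors the embedding of $\mathfrak{C}_{Q/\Bun}$ through $[E^\ke_d/T_{U/\Bun}|_Q]$, so the preimage of $\mathfrak{C}_{Q/\Bun}$ in $\hat E^\ke_{d'}|_Q$ automatically lies inside $E^\ke_d$ and agrees with its preimage in $E^\ke_d$. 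The former preimage equals $\mathbf{C}_{Q/U}$ by the Behrend--Fantechi identification above (the quotient map on $\mathbf{C}_{Q/U}$ is precisely $q^{-1}(\mathfrak{C}_{Q/\Bun})$), while the latter equals $\mathbf{C}_\ke$ by definition, yielding $\mathbf{C}_{Q/U} = \mathbf{C}_\ke$. The main obstacle is the middle paragraph: one must unwind enough of the construction of \S\ref{POT} to verify explicitly that $d\sigma^\ke$ lands in the subbundle $E^\ke_d$ and that the induced $T$-action matches the POT-induced one, so that the otherwise purely formal Behrend--Fantechi comparison applies cleanly.
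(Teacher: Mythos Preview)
Your argument is correct and follows the same strategy as the paper: identify the intrinsic normal cone $\mathfrak{C}_{Q/\Bun}$ with $[\mathbf{C}_{Q/U}/T_{U/\Bun}]$ via Behrend--Fantechi, recall that $\mathbf{C}_\ke$ is by definition the pullback of $\mathfrak{C}_{Q/\Bun}$ along $E^\ke_d\to[E^\ke_d/T_{U/\Bun}]$, and conclude. The paper's proof is two lines because the compatibility you verify explicitly in your middle paragraph (that $d\sigma^\ke$ factors through $E^\ke_d$ and agrees with the POT map) is exactly the content of the commuting diagram \eqref{embedded obstruction} established just before the Lemma; your unpacking of that diagram is accurate but not logically new.
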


\begin{proof} First, we have by definition 
$$\mathbf{C}_\ke=\bC_{in}\times_{[E_d^\ke/T_{U_{d'}^{\ke}/\Bun }]}E_d^\ke,$$
where $\bC_{in}$ is the relative intrinsic normal cone 
of $Q^\ke_{g, k}(X, d)$ over $\Bun$ (see \cite{BF}) and $[E_d^\ke/T_{U_{d'}^{\ke}/\Bun }]$ denotes the stack quotient.
Since
$\bC_{in}=[\mathbf{C}_{Q^\ke_{g, k}(X, d) / U^{\ke}_{d'}}/T_{U_{d'}^{\ke}/\Bun }]$, the Lemma follows.
\end{proof}

Proposition \ref{virtual class} and Lemma \ref{normal cone} imply the following concrete
description of the virtual classes of moduli spaces of $\ke$-stable quasimaps to $X$.
\begin{Cor}\label{concrete virtual class}
$$ [\Qked ]^{\vir} = 0_{E_d^\ke}^! ([\mathbf{C}_{Q^\ke_{g, k}(X, d) / U^{\ke}_{d'}}]).$$
\end{Cor}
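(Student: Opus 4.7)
The plan is essentially a one-line deduction from the two preceding results. The statement of Lemma \ref{normal cone} is precisely the identification, as closed substacks of $E_d^\ke$, between the relative normal cone $\mathbf{C}_{Q^\ke_{g, k}(X, d) / U^{\ke}_{d, d'}}$ and the Behrend--Fantechi obstruction cone $\mathbf{C}_\ke$. Substituting this identification into the formula
\[
[\Qked]^{\vir} = 0_{E_d^\ke}^!([\mathbf{C}_\ke])
\]
of Proposition \ref{virtual class} gives exactly the claim.

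The only thing worth spelling out is why the two cones sit inside $E_d^\ke$ in a compatible way, so that the Gysin map $0_{E_d^\ke}^!$ applied to the two cycle classes really does give the same element of $A_*(\Qked)_\QQ$. For this I would simply recall the proof of Lemma \ref{normal cone}: the obstruction cone is, by definition, the fiber product
\[
\mathbf{C}_\ke = \bC_{in} \times_{[E_d^\ke/T_{U^\ke_{d, d'}/\Bun}]} E_d^\ke,
\]
and the relative intrinsic normal cone satisfies $\bC_{in} = [\mathbf{C}_{Q^\ke_{g, k}(X, d) / U^{\ke}_{d, d'}}/T_{U^\ke_{d, d'}/\Bun}]$, so the two cones are literally the same closed substack of $E_d^\ke$. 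In particular no further compatibility check is needed beyond what is already in Lemma \ref{normal cone}.

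There is no genuine obstacle here: the work has all been done in constructing the embedding \eqref{embedding diagram}, verifying that the obstruction theory \eqref{perfect} is the one represented by the pair $(E_d^\ke,\hat E_{d'}^\ke)$ together with diagram \eqref{embedded obstruction}, and then matching intrinsic and embedded normal cones in Lemma \ref{normal cone}. The corollary is therefore just the clean restatement of $[\Qked]^{\vir}$ in the language in which it will be used in Section \ref{proof of Main Thm}, namely as a Gysin pullback of a concrete cycle on a smooth stack.
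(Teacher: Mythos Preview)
Your proposal is correct and matches the paper's approach exactly: the paper simply states that Proposition \ref{virtual class} and Lemma \ref{normal cone} together imply the corollary, without writing out a separate proof. Your additional remark about why the two cones sit compatibly inside $E_d^\ke$ is a helpful elaboration, but it is already contained in the proof of Lemma \ref{normal cone} as you note.
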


\begin{Rmk} Recall that in genus zero we take a trivial twisting line bundle $\sM$, so in this
 case $U_{d'}^{\ke}=Q^\ke_{0,k}(\PP(V),d)$ and the construction reduces to the known realization of 
  $Q^\ke_{g, k}(X, d) $ as the zero locus of a section of the bundle $\oplus_i\pi_*(\sL)^{l_i}$ on 
  $Q^\ke_{0,k}(\PP(V),d)$. This bundle has ``correct" rank $d\sum_il_i +r$, hence its refined top Chern class gives  $[Q^\ke_{g, k}(X, d) ]^{\mathrm{vir}}$.                                                                              
However, for $g\geq 1$ the rank of the bundle 
${F}_{d'}^\ke=\pi _*\sP ^{\ke}_{d'} \oplus  \pi _* \sR^{\ke}_{d'} $ is larger than the virtual codimension 
of $Q^\ke_{g, k}(X, d) $ in $U_{d'}^{\ke} $, so the virtual class is {\it not} the refined top Chern class.
\end{Rmk}

\section{Proof of Theorem \ref{Main}}\label{proof of Main Thm}

\subsection{Overview} Adapting an idea of Bertram from \cite{Be}, we consider a one-parameter degeneration of the diagram
\eqref{embedding diagram} which is obtained
 via a refinement of MacPherson's Graph Construction. The proof of Theorem \ref{Main} will then follow by
 analyzing the central fiber limit of the virtual cycle 
 $[Q^+_{g,k}(X,d)]^{\mathrm{vir}}$ in this degeneration.

\subsection{Boundary strata}\label{sec:boundary}

Let $\ke_0$ be a wall, so that  $m:=1/\ke_0$ is a positive integer. 
Let $\ke_+ >\ke_0\geq \ke_{-}$ be stability parameters separated only by the single wall $\ke_0$.
Fix the numerical data $(g,k,d)$.
We will denote by $Q^{\pm}_{g,k}(X,d)$, $U^{\pm}_{d'}$ etc. the moduli spaces corresponding to the stability parameters $\ke_{\pm}$. The contraction morphisms
with the abused notation
$$c:Q^{+}_{g,k}(X,d)\lra Q^{-}_{g,k}(X,d),\;\;\; c:U^{+}_{d'}\lra U^{-}_{d'}$$
contract precisely the rational tails of degree $m$.

The evaluation maps at the markings will be denoted by $\hat{ev}_j$ for $Q^\ke_{g,k}(\PP(V\ot \CC^N),d')$ and for its open substack $U^{\ke}_{d'}$, while we reserve the notation $ev_j$ for the evaluation maps on $Q^{\ke}_{g,k}(\PP(V),d)$ and on $Q^{\ke}_{g,k}(X,d)$.

For a finite index set $A$, with $|A|= 1, 2, ..., [\frac{d}{m}]$ we associate to each $a\in A$ the integer
$d_a=m$ 
and set \begin{equation}\label{eqn:d_0} d_0=d_0^A := d - \sum _{a\in A} d_a =d-|A|m\ge 0. \end{equation}

Denote
\begin{align*}
        D_A & :=  U^{+}_{k+A,  d_0'}  \times _{\PP(V\ot \CC ^N)^A} 
\prod_{a\in A}  Q^+_{0,a}(\PP(V \ot \CC ^N),  d_a) ,  \\
      \tilde{D}_A & :=  U^{+}_{k+A, d_0'}  
\times _{\PP(V \ot \CC ^N)^A} \prod_{a\in A} \fC^+_{0,a,d_a}  ,
        \end{align*}
where $ \fC^+_{0,a,d_a} \ra Q^+_{0,a}(\PP(V \ot \CC ^N),  d_a)$ is the universal curve, the notations $U^{\pm}_{k+A,  d_0'} $ are the obvious ones, and the fiber products are made via $(\hat{ev}_a)_{a\in A}$ on the left and 
$\prod_{a\in A}\hat{ev}_a$ on the right. The easiest way to describe the evaluation map 
$\hat{ev}_a: \fC^+_{0,a,d_a} \ra \PP(V \ot \CC ^N)$ is by identifying $ \fC^+_{0,a,d_a}$ with the moduli stack $Q^+_{0,a|1}(\PP(V \ot \CC ^N),  d_a)$ which parametrizes $\ke_+$-stable quasimaps of degree $d_a$ from rational curves with one marking $a$ of weight $1$ and one additional marking of weight $0+$, see \cite{bigI} for more on these moduli stacks.

We will need an alternative description of these boundary strata which takes into account the twisting line bundles $\sM$.

Consider the diagram of universal curves
\begin{equation}\label{univ curves}
\xymatrix{\fC ^{+}_{g, k, d'} \ar[rd]_{\pi} \ar[r]\ar@/^1pc/[rr]^{\tilde{c}}  \ar@/^3pc/[rrr]^{h_{+}} 
& c^*\fC ^{-}_{g, k, d'}\ar[d]\ar[r]
&\fC ^{-}_{g, k, d'}\ar[d]^{\pi} \ar[r]^{h_{-}} &\PP(\CC^N)\\
& U^{+}_{d'}  \ar[r]^{c}    & U^{- }_{d'}  &
}
\end{equation}
with cartesian square and the maps $h_{\pm}$ given by the sections $s_1,\dots s_N\in \Gamma(\fC ^{-}_{g, k, d'},\sM_-)$, so that $\sM_{\pm}=(h_{\pm})^*(\cO_{\PP(\CC^N)}(1))$. For each $a\in A$ we obtain maps
\begin{equation} \label{eqn:h_a} h_a^{\pm}: U^{\pm}_{k+A,d_0'}\lra\PP(\CC^N) \end{equation}
as the compositions
$$\xymatrix{ h_a^- : U^{-}_{k+A,d_0'}\ar^ -{\Sigma_a} [r] & \fC^-_{g,k+A, d_0'}\ar^-{\tilde{b}_A}[r] &\fC ^{-}_{g, k, d'}\ar^{h_-}[r] & \PP(\CC^N),
}
$$
$$
\xymatrix{ h_a^+ : U^{+}_{k+A,d_0'} \ar^ -{{c}_A } [r]& U^{-}_{k+A,d_0'} \ar^ -{h^-_a}[r] & \PP(\CC^N) .}
$$
Here $\Sigma_a$ is the section corresponding to the marking $a\in A$, ${\tilde{b}_A}$ is the map that trades each marking in $A$ for a base-point of length $d_a$, and ${c}_A$ is the contraction of rational tails of degree $d_a$.
There is a natural identification
\begin{equation}\label{new D_A}
 D_A \cong U^{+}_{k+A, d_0'}  \times _{(\PP(V\ot \CC ^N)\times\PP(\CC^N))^A} 
\prod_{a\in A}  (Q^+_{0,a}(\PP(V \ot \CC ^N),  d_a) \times\PP(\CC^N)),
\end{equation}
where the fiber product is now done using $( (\hat{ev}_a, h^+_a))_{a\in A}$ on the left and $\prod_{a\in A} (\hat{ev}_a\times {\mathrm {id}}_{\PP(\CC^N)}) $ on the right. Similarly,
\begin{equation*} 
 \tilde{D}_A \cong  U^{+}_{k+A, d_0'}  
\times _{(\PP(V\ot \CC ^N)\times\PP(\CC^N))^A} \prod_{a\in A} (\fC^+_{0, a,d_a}\times\PP(\CC^N)).
\end{equation*}

We have the following commuting diagram of canonical morphisms:
\begin{equation}\label{bdry starta} 
\xymatrix{   U^{+}_{d'}  \ar[rr]^{{c}}  &  & U^{- }_{d'}    \\
\ar[d]_{\pr _{A}} { D}_A  \ar[r]^{\mpr  _{A} \ \ \ \ \ \ \  } \ar[u]^{\nu_A}  &   U^{+}_{k+A, d_0'}  \ar[r]^{c_A}  & U^{- }_{k + A , d_0'}  \ar[u]_{b_A}   \\
 \prod_{a\in A} (Q^{+}_{0, a}(\PP(V \ot \CC ^N), d_a)\times\PP(\CC^N)) , &    &  }  
 \end{equation}
where ${b}_A$ denotes the morphism which trades the markings $A$ for base points of length  $d_a$. The two projections ${\pr}_A$ and ${\mpr}_A$ are those coming from the fiber product description \eqref{new D_A} of ${D}_A$. 
The map $\nu_A$ 
has degree $|A|!$ and 
sends  ${D}_A$ onto the boundary
stratum of $U^{+}_{d'}$ generically parametrizing (unobstructed) $\ke_+$-stable quasimaps to $\PP(V\ot \CC^N)$ whose domain curves have exactly $|A|$ unordered rational tails of degree $d_a$. In particular, for $A=\{a\}$ the map $\nu_{\{a\}}$ is an
embedding of ${D}_{\{a\}}$ as a boundary divisor.

  The contractions ${c}$, ${c}_A$ are isomorphisms over the (nonempty) loci of quasimaps with irreducible domain curves. By Lemma \ref{Con}, the complements of these loci have positive codimension and we conclude that ${c}$, ${c}_A$  
are birational morphisms and hence degree 1 maps. 

We finally introduce one more piece of notation.
Let $p_a$ denote the Cartier divisor on the universal curve $\fC _{g, k+\{a\}, d'_0}^{\pm}$ of the moduli spaces  $U^{\pm}_{k+\{a\},d'_0}$ which is the image of the section $\Sigma_a$ corresponding to the marking $a$. Similarly, we have the Cartier divisor $p_a^{tail}$ on the universal curve $\fC^+_{0,a, d_a} \ti \PP (\CC ^N)$
of $Q^+_{0,a}(\PP(V \ot \CC ^N),  d_a) \ti \PP (\CC ^N)$ defined by the image of the section $\Sigma _{tail, a}$  corresponding to the marking $a$.
As usual, $\cO(p_a)$, respectively $\cO(p_a^{tail})$, will stand for the associated line bundles; and $\cO _{p_a}$, respectively $\cO_{p_a^{tail}}$ will stand
for the coherent sheaves $\Sigma _{a *} \Sigma _a ^* \cO$, $\Sigma _{tail, a *}\Sigma _{tail, a}^*\cO$ on the universal curves.
Then $\Sigma _a^*\cO (-p_a)$, respectively $\Sigma _{tail, a}^*\cO (-p_a^{tail})$, is identified with the 
line bundle with first Chern class $\psi_a$ on $U^{\pm}_{k+A, d'_0}$, respectively $\psi_a^{tail}$ on 
$Q^+_{0,a}(\PP(V \ot \CC ^N),  d_a)\ti \PP(\CC^N)$. 
Abusing notation, we will write $\cO(\psi_a)$ and $\cO(\psi_a^{tail})$ for these line bundles, and $\cO(-\psi_a)$, $\cO(-\psi_a^{tail})$ for their duals.

\subsection{MacPherson's Graph Construction}

For easy notation, for $A = \{ a\}$ in \eqref{bdry starta} we write $D_a$, $\mathrm{Pr} _a$, $c_a$, $b_a$, etc instead 
of $D_{\{a\}} $, $\mathrm{Pr} _{\{a\}} $ $c_{\{a\}}$ $b_{\{a\}}$, etc.
Let $\pi: \fC _{g, k, d'}^{\pm} \ra U^{\pm}_{d'} $ be the universal curve and
 denote  by 
 $\tilde{c}$ the contraction morphism from $\fC _{g, k, d'}^+$ to $\fC_{g,k, d'}^{-}$,
                          which is an isomorphism outside the divisor  $\tilde{D}_{a}$.
 Hence $\sL'_{+} \cong \tilde{c}^*\sL '_{-} (-d_a \tilde{D}_{a})$.
 Here the coefficient $-d_a$ is obtained by the consideration of $\deg \sL '_{+} |_{C_a} = d_a$,
 $\deg \cO_{C_a} (C_a) = -1$ for the contracted rational tail $C_a$ on the fiber curve of $\pi$ over a general closed point 
 of $D_a$.
It follows that for every $l\geq 1$
there are homomorphisms
\[ (\sL'_{+})^l     \cong \tilde{ c}^* (\sL' _{- })^l (- l d_a\tilde{{D}}_{a})  \ra \tilde{c}^*(\sL' _{- })^l \]
of line bundles on $\fC _{g, k, d'}^+$.

In particular, taking $l=1$ and using the top line of the diagram \eqref{twisting} gives a map 
 $\sP^{+}_{d'}\lra\tilde{c}^* (\sP^{-}_{d'})$. Applying $\pi_*$  we obtain homomorphisms
 \begin{align*}
 \Phi_P: &  P^+_{d'}\lra{c}^*P^-_{d'}, \;\;\;\;  \Phi_R:  R^+_{d'}\lra{c}^*R^-_{d'},\\
& \Phi=(\Phi_P,\Phi_R): {F}^+_{d'}\lra  c^*{F}^-_{d'}
 \end{align*}
of vector bundles on $U^+_{d'}$, which are isomorphisms outside $ D_{a}$.
We have used here the canonical isomorphisms  $\pi_*\tilde{c}^* \sR^{-}_{d'} 
\cong  c^*\pi_*\sR^-_{d'}$ and
$ \pi_*\tilde{c}^*\sP^{-}_{d'}\cong   c^*\pi_*\sP^-_{d'}$ obtained by applying to \eqref{univ curves} 
the base-change followed by the projection formula.

Consider the Grassmann bundle over $ U^{+}_{d'}$
$$\Gr :=\Gr ({F}^+_{d'}\oplus  c^*{F}^-_{d'}):=
Grass(r_d, {F}^+_{d'}\oplus  c^*{F}^-_{d'}),$$
with $r_d=\rank({F}^+_{d'})$. 
Let $\eta: \Gr  \ra U^{+}_{d'}$ be the projection and denote by 
$\zeta$ the tautological subbundle of rank $r_d$ in $\eta^*({F}^+_{d'}\oplus  c^*{F}^-_{d'})$.

The map $\eta\times \id$ has a section 
\begin{equation*} 
v: U^{+}_{d'}\times \AAA ^1\lra \Gr \times \AAA ^1,\;\;
v(y,\lambda)=(y,\mathrm{graph}(\lambda (\Phi)_y), \lambda ).\end{equation*} 
Define the closed substack
\begin{equation*} 
\Gamma :=\overline{\mathrm{Im}(v)}\subset \Gr \times \PP^1
\end{equation*}
as the stack-theoretic closure of the image of $v$. As $ U^{+}_{d'}$ is nonsingular and irreducible, ${\Gamma}$ is also irreducible, of dimension equal to $1+\dim U^{+}_{d'}$.

In fact, if we consider the ``component" Grassmann bundles
$$\Gr _P:=\Gr (P^+_{d'}\oplus  c^*P^-_{d'}):=
Grass(r_P, P^+_{d'}\oplus  c^*P^-_{d'}),$$
$$\Gr _R:=\Gr (R^+_{d'}\oplus  c^*R^-_{d'}):=
Grass(r_R, R^+_{d'}\oplus  c^*R^-_{d'}),$$
with projections $\eta_P,\eta_R$ and tautological subbundles ${\zeta}_P,{\zeta}_R$,
then there is a natural inclusion
$$\Gr_P\times_{U^{+}_{d'}}\Gr_R\subset \Gr$$
such that $ \zeta$ restricts to $\zeta_P\boxplus\zeta_R$ and the inclusion of $\Gamma$ in $\Gr\times\PP^1$ factors through
$(\Gr_P\times_{U^{+}_{d'}}\Gr_R)\times\PP^1$.

For $\lambda\in \PP^1=\AAA^1\cup\{\lambda=\infty\}$ denote by $\Gamma_\lambda$ the fiber of the projection $\Gamma\ra\PP^1$. 
When $\lambda\in \AAA^1$, under the identifications 
$v_\lambda: U^+_{d'}\stackrel{\cong}{\lra}\Gamma_\lambda$, we have 
$$v_\lambda^* \zeta = \mathrm{Im}({F}^+_{d'}\stackrel{(\id,\lambda\Phi)}{\lra}{F}^+_{d'}\oplus  c^*{F}^-_{d'}).$$
In particular, at $\lambda=0$ we have
$v_0^* {\zeta}={F}^+_{d'}\oplus\{0\}$.

At $\lambda=\infty$ the fiber breaks into components encoding the degeneracy of the map $\Phi$, as in \cite[Example 18.1.6]{Fu}. First of all, there is a distinguished component 
$\Gamma_{\infty, dist}$ which has multiplicity one and projects 
birationally to $U^+_{d'}$, while $\zeta|_{\Gamma_{\infty,dist}}=\{0\}\oplus c^*{F}^-_{d'}$. All other components 
of $\Gamma_{\infty}$ come with some multiplicities and project into $ D_{a}$ under $\eta$. 
Their description is our next task. The analysis is similar to the one in the proof of \cite[Lemma 4.4]{Be}, where a related genus zero case is treated. In our situation there are complications due to the twisting by $\sM$, but also slight simplifications, due to the fact that 
 $ c$ only contracts rational tails of fixed degree $d_a$, which therefore do not interfere 
with each other.

\subsubsection{Description of $\Gamma _{\infty}$}
For each $j_a\geq 1$ consider the $\PP^1$-bundle over ${D}_{a}$
$$\PP_{j_a}:= \PP\left( {\pr}_{a}^*\cO(j_a\psi_a^{tail})\oplus {\mathrm{Pr}}_{a}^*\cO(-j_a\psi_a) \right)$$
and their fiber product
$$ \PP _{j_A} := \prod _{a \in A} \PP _{j_a}|_{D_A} $$
over $D_A$.

\begin{Thm}\label{thm:Gamma des} 
Let $j_A$ be the multi-index $(j_a)_{a\in A}$ with each $j_a$ in the range $1 \le  j_a \le \mathrm{max} \{d_a, d_a l _i \, | \, i=1, ..., r\}$ and 
let $m_{j_A}:=\prod_{a\in A}j_a$.
For each $j_A$, there exists a map $\alpha_{j_A}: \PP _{j_A}  \to {\bf Gr}$, described below,  satisfying that
\begin{equation}\label{special fiber 1}
[{\Gamma}_\infty]=[{\Gamma}_{\infty, dist}]
+\sum_{ (A,j_A)} m_{j_A} [\Gamma_{\infty,j_A}] =[{\Gamma}_{\infty, dist}]+\sum_{(A,j_A)} \frac{m_{j_A}}{|A|!}
(\alpha_{j_A})_*[\PP_{j_A}]
\end{equation} 
in the Chow group $A_*(\Gr)_\QQ$. Here 
$\Gamma_{\infty,j_A}$ is the image stack of $\alpha _{j_A}$. Furthermore $\Gamma _{\infty, j_A}$ projects to $D_A$ under the projection
map $\eta : \Gr \to U^+_{d'}$.
\end{Thm}

Defining $\alpha _{j_A}$ amounts to finding a subbundle $\xi^{j_A}$ 
of $\pi_{\PP}^*\nu_{A}^*({F}^+_{d'}\oplus c^*{F}^-_{d'})$ with its rank equal to the rank of $F^{+}_{d'}$.
Denote by $\pi_{\PP}: \PP _{j_A} \to D_A$ the projection map.
Then the vector bundle  $\xi^{j_A}$
will be constructed as an extension of 
$ \boxplus _{a \in A} \cO_{\PP_{j_a}} (-1) \ot \pi _{\PP}^* {\tt F}^{j_a} $
by 
$ \pi_{\PP}^*({\pr}_{A} ^*{F}^{+, j_A+1}_{tail, d_a} \oplus  {\mpr}_{A} ^*{c}_{A}^*{F}^{-, j_A-1}_{ d'_0} ) $
for some vector bundles 
$${\tt F}^{j_a}, F^{+, j_A +1}_{tail, d_a}, {F}^{-, j_A-1}_{d'_0} \text{ on } D_a, \prod _{a\in A} Q^{+}_{0, a}(\PP(V \ot \CC ^N), d_a)\times\PP(\CC^N),
U^{-}_{k+A, d_0'} \text{ respectively. } $$
The bundles $\pr _a^* F^{+, j_a}_{tail, d_a}$ (resp. $\mpr_a^*c_a^* {F}^{-, j_a}_{d'_0}$) for $j_a$ will
form a decreasing (resp. increasing) filtration of the kernel sheaf of $\nu _a^*\Phi$
(resp. of the sheaf $\nu_a^* c^*F^-_{d'}$).

\subsubsection{Description of the vector bundle $F^{+, j_a +1}_{tail, d_a}$ on $Q^{+}_{0, a}(\PP(V \ot \CC ^N), d_a)\times\PP(\CC^N)$}

Consider first the case $A=\{ a\}$ of the boundary divisor ${D}_{a }$. 
On the universal curve $$\pi:\fC_{0,a,d_a}^+\times\PP(\CC^N)\ra Q^{+}_{0, a}(\PP(V \ot \CC ^N), d_a)\times\PP(\CC^N),$$ put
$\sL_+ := \sL'_+\boxtimes\cO_{\PP(\CC^N)}(-1)$.
We have the diagram
\begin{equation*} 
\xymatrix{ 0 \ar[r]   &\mathscr{L}_+ \ot V \ar[r]^{\oplus _j s_j}   &  \oplus _{j=1}^N  \mathscr{L}' _+\ot V \ar[r]\ar[d]^{\oplus_j(\oplus_id\varphi_i)}        & \mathscr{P}^+_{tail,d_a}  \ar[r]                 & 0 \\
                      0 \ar[r]  & \oplus _{i=1}^r  \sL_+  ^{l_i}  \ar[r]^{\oplus _{i, j} s_j^{l_i}}       
& \oplus _{i, j}   (\sL '_+) ^{l_i} \ar[r]                              & \mathscr{Q}^+_{tail,d_a}  \ar[r]              & 0 ,} \end{equation*}
whose rows are obtained from the exact sequence
$$0 \ra \cO_{\PP(\CC^N)}(-1) \ra \oplus _{j=1}^N \cO_{\PP(\CC^N)}\ra Q\ra 0$$
via pull-backs, tensoring with appropriate line bundles, and taking direct sums, as explained in \S 3.
Now define the vector bundles
\begin{equation*} 
P^+_{tail, d_a}:= \pi_*\sP ^+_{tail, d_a},\; \sR^{+}_{tail, d_a} := \oplus _{i,j} (\sL ' _{+})^{l_i}, \; R^+_{tail, d_a}:=\pi_*\sR^{+}_{tail, d_a},\;
{F}^+_{tail,d_a}:=P^+_{tail, d_a} \oplus R^+_{tail, d_a}.
\end{equation*}
For integers $j_a=1,\dots, \mathrm{max} \{d_a, d_al_i \, | \, i=1, ..., r \}$, we have the subbundles
\begin{equation}\label{P filtration}
 P^{+, j_a}_{tail, d_a} : =  \pi_* ( \sP^+_{tail, d_a}(- j_ap_a^{tail}) ),
 \end{equation}
\begin{equation} \label{R filtration}
 R^{+, j_a}_{tail, d_a} : =  \pi_*    (\sR^+_{tail, d_a}  (- j_ap_a^{tail}))     
\end{equation} of vector bundles $P^+_{tail, d_a}$, $R^+_{tail, d_a}$ respectively.
They are vector bundles on $Q^+_{0,a}(\PP (V\ot\CC^N), d_a)\times\PP(\CC^N)$.
We also put $$ P^{+, 0}_{tail, d_a} : =P_{tail,d_a}^+,\;  R^{+, 0}_{tail, d_a} : =R_{tail,d_a}^+, \;
{F}^{+, 0}_{tail, d_a} : ={F}_{tail,d_a}^+.$$

 Note that
$ P^{+, j_a}_{tail, d_a}=0$ if $j_a >d_a$, and that $(\sL'_+)^{l_i}$ does not contribute to $ R^{+, j_a}_{tail, d_a}$ if 
$j_a >l_id_a$. Hence the quotients of the decreasing filtrations given by \eqref{P filtration} and \eqref{R filtration} are
$$0\ra P^{+, j_a+1}_{tail, d_a}\ra P^{+, j_a}_{tail, d_a}\ra 
 {\tt P} _{tail}^{j_a}\ot \cO(j_a\psi_a^{tail}) \ra 0,$$
$$0\ra R^{+, j_a+1}_{tail, d_a}\ra R^{+, j_a}_{tail, d_a}\ra 
 {\tt R}_{tail}^{ j_a}\ot \cO(j_a\psi_a^{tail}) \ra 0 ,$$
where we put for each $0\leq j_a \le \mathrm{max} \{d_a, l_i d_a \, | \, i=1, ..., r \}$
$$ {\tt P} _{tail}^{ j_a}  :=  \left\{\begin{array}{rr}  ({ev}_a\times \id_{\PP(\CC^N)})^* \left( (\cO_{\PP(V\ot\CC^N)}(1)\ot V)\boxtimes Q \right),
& \text{if } j_a \le d_a \\
0, & \text{if } j_a > d_a \end{array}\right. 
$$
and
$$ {\tt R}_{tail}^{ j_a}:=\oplus_{i=1}^r {\tt R}_{i, tail}^{j_a},$$
$$ {\tt R}_{i, tail}^{j_a}  :=  \left\{\begin{array}{rr}  ({ev}_a\times\id_{\PP(\CC^N)})^*
\left(  \cO_{\PP(V\ot\CC^N)}(l_i)\boxtimes \oplus_{j=1}^N\cO_{\PP(\CC ^N)}  \right),
& \text{if } j_a \le l_id_a \\
0, & \text{if } j_a > l_i d_a \end{array}\right. .$$ 
Alternatively, when they are not set to zero, 
\begin{align*}
{\tt P}_{tail}^{j_a}=  \pi_*(\mathscr{P}^{+}_{tail, d_a} \ot \cO_{p_a^{tail}}) ,\; \;  {\tt R}_{tail}^{j_a}=   \pi_*(  \sR^{+}_{tail, d_a} 
\ot  \cO_{p_a^{tail}}).
\end{align*}

Taking the direct sums 
$$ {F}^{+, j_a}_{tail, d_a} : =  P^{+, j_a}_{tail, d_a}\oplus R^{+, j_a}_{tail, d_a},\;\;\;  {\tt F}^{j_a}_{tail} :={\tt P}^{j_a}_{tail}\oplus  {\tt R}^{j_a}_{tail}$$
gives a filtration of the vector bundle ${F}_{tail,d_a}^+$ on $Q^{+}_{0, a}(\PP(V \ot \CC ^N), d_a)\times\PP(\CC^N)$, with quotients 
${\tt F}^{j_a}_{tail} \ot \cO(j_a\psi_a^{tail})$.
 The pull-back $\nu_{a}^*{F}^+_{d'}$ can be written 
as the extension   
\begin{align}\label{res}
\xymatrix{ 0\ar[r] & {\mathrm{pr}}_{a}^*{F}^{+, 1}_{tail, d_a}\ar[r] &\nu_{a}^*{F}^+_{d'}
\ar[r]^{res\;}   & {\mpr} _{a} ^*{F}^+_{d'_0} \ar[r] & 0} .
\end{align}

\subsubsection{Description of the vector bundle $F^{-, j_a-1}_{ d_0'}$ on $U^{-}_{k+\{a\}, d_0'}$} 
Let ${F}^\pm_{b_A, d'_0}$ denote the vector bundles on $U^{\pm}_{k+A,  d_0'}$ defined as in
\eqref{E hat}, but using the twisting line bundles $\sM^\pm$ {\it induced from} $\fC ^{-}_{g, k, d'}$ (and hence from $\overline{M}_{g,k}$) via
pull-back by
$${\tilde{b}_A}: \fC^-_{g,k+A, d_0'}\lra\fC ^{-}_{g, k, d'}.
$$

The homomorphism $\Phi$ factors when pulled-back to ${D}_A$ as 
\begin{equation*} 
\xymatrix{ 
\nu_A ^* {F}^+_{d'} \ar[r]^{res\ \ \;}   & {\mpr} _A ^*{F}^+_{b_A, d'_0} \ar[rr]^{\text{generic. isom}\ } &   
&  {\mpr} _A ^*{c}_A^* {F}^-_{b_A, d'_0} \ar@{^(->}[r] 
&  {\mpr}_A ^*{c}_A^*{b}_A ^* {F}^-_{d'} =  \nu_A ^* c^*{F}^-_{d'}  } .  \end{equation*}
Here the first map $res$ is given by the restriction of sections to the non-contracted parts of the universal 
curve.
The middle arrow is the pull-back by $ {\mpr} _A$ of the map $\Phi$ on $U^+_{k+A, d'_0}$ and is therefore an isomorphism generically on ${D}_A$. 
The third map is induced from the canonical injections on the universal curve
$\sL'_{-,d'_0}\ra\sL'_{-,d'_0}(\sum_a d_a p_a)=\tilde b_A^*\sL'_{-,d'}$ and
$(\sL'_{-,d'_0})^{l_i}\ra(\sL'_{-,d'_0})^{l_i}(\sum_a l_i d_a p_a)=\tilde b_A^*(\sL'_{-,d'})^{l_i}$.

Consider  the codomain $  {\mpr}_{a} ^*{c}_{a}^*{b}_{a} ^*  F^-_{d'} $ of
$\Phi|_{ D_{a}}$ and the square diagram of universal curves
\[ \xymatrix{ \fC^{-}_{g, k+\{a \}, d_0'} \ar[r] \ar[d]_{\pi} &  \fC^{-}_{g, k, d'} \ar[d] \\
 U^-_{k+\{a \}, d_0'} \ar[r]_{b_a} & U^-_{k, d'} . } \]
 In the bundle ${b}_{a} ^*  F^-_{d'} $ on $U^-_{k+\{a\}, d_0'}$
we have the increasing filtrations 
\begin{align*}
& P^{-, 0}_{d'_0}\subset 
 P^{-, 1}_{d'_0}\subset \dots
\subset  P^{-, d_a}_{d'_0}=
{b}_{a} ^* P^-_{d'},\\
&  R^{-, 0}_{d'_0}\subset 
 R^{-, 1}_{d'_0}\subset \dots
\subset  R^{-, \mathrm{max}_i \{d_al_i\}}_{d'_0}=
{b}_{a} ^* R^-_{d'}
\end{align*}
induced via the subbundles
$$
P^{-, j_a}_{d'_0} : =\pi_*\left(  \sP^{-}_{d'_0}  ( j_a p_a)
\right)  \cap   b_a^*P^-_{d'}, \; j_a=0,1,\dots, d_a,
$$
$$
R^{-, j_a}_{d'_0} : =\pi_*\left(        \sR^-_{d'_0}  ( j_a p_a) 
\right)  \cap   {b}_{a} ^* R^-_{d'}, \; j_a=0,1,\dots ,\mathrm{max}_i \{ l_id_a\}.
$$
Here we use the natural injections $ \sP^{-}_{d'_0}  ( j_a p_a) \to \sP^{-}_{d'_0}  ( d_a p_a) \cong \tilde{b}_a^* \sP^{-}_{d'}$ for $j_a \le d_a$
and   $\sR^-_{i, d'_0}  ( j_a p_a)  \to  \sR^-_{i, d'_0}  ( l_i d_a p_a) \cong \tilde{b}_a^*\sR^-_{i, d'}$ for $j_a \le l_id_a$.
The  quotients are
$$0\ra  P^{-, j_a-1}_{d'_0}\ra  P^{-, j_a}_{d'_0}\ra
{\tt P}^{-, j_a}\ot \cO(-j_a\psi_a) \ra 0,
$$
$$0\ra  R^{-, j_a-1}_{d'_0}\ra  R^{-, j_a}_{d'_0}\ra
 {\tt R}^{-, j_a}\ot \cO(-j_a\psi_a) \ra 0.
$$
where we put for each $0\leq j_a \le \mathrm{max} \{d_a, l_i d_a \, | \, i=1, ..., r \}$
\begin{eqnarray}\label{eqn:def tP-} {\tt P} ^{-, j_a}  :=  \left\{\begin{array}{rr}   \pi_*( \mathscr{P}^{-}_{d'_0} \ot \cO_{p_a}),
& \text{if } j_a \le d_a \\
0, & \text{if } j_a > d_a \end{array}\right. ,
\end{eqnarray}
and
$$ {\tt R}^{-, j_a}:=\oplus_{i=1}^r {\tt R}^{-,j_a}_{i} ,$$
\begin{eqnarray}\label{eqn:def tR-i} {\tt R}^{- , j_a}_i  :=  \left\{\begin{array}{rr} 
 \pi_*(  \oplus _{j=1}^N  (\sL '_{-})^{l_i} \ot  \cO_{p_a}),
& \text{if } j_a \le l_id_a \\
0, & \text{if } j_a > l_i d_a \end{array}\right. . \end{eqnarray}

Setting
$$ F_{d'_0}^{-,j_a}:=P^{-, j_a}_{d'_0}\oplus R^{-, j_a}_{d'_0}$$
gives an increasing filtration of the vector bundle $b_{a} ^*  F^-_{d'} $ on $U^-_{k+\{a\}, d_0'}$ with quotients 
${\tt F}^{-, j_a} \ot \cO(-j_a\psi_a)$ and ${\tt F}^{-, j_a}  := {\tt P} ^{-, j_a} \oplus {\tt R}^{-, j_a}$.

\subsubsection{Description of $\alpha _{j_a} : \PP_{j_a} \to \Gr$}
For each $j_a\geq 1$ recall the $\PP^1$-bundle over ${D}_{a}$
$$\PP_{j_a}:= \PP\left( {\pr}_{a}^*\cO(j_a\psi_a^{tail})\oplus {\mathrm{Pr}}_{a}^*\cO(-j_a\psi_a) \right),$$
with projection $\pi_{\PP}:\PP_{j_a}\lra {D}_{a}$. Consider the tautological sequence
$$0\lra \cO_{\PP_{j_a}}(-1)\lra \pi_{\PP}^*\left( {\pr}_{a}^*\cO(j_a\psi_a^{tail})\oplus {\mathrm{Pr}}_{a}^*\cO(-j_a\psi_a) \right)\lra 
\cO_{\PP_{j_a}}(1)\lra 0.$$

Now
define the extension ${ \xi}_P^{j_a}$ as the vector bundle uniquely fitting in the commuting diagram with exact columns

\[ \xymatrix{    0  &  0     \\
  \cO _{\PP_{j_a}}(-1) \ot \pi_{\PP}^*{\tt P} ^{j_a} \ar@{^{(}->}[r] \ar[u] 
& \pi_{\PP}^*\left(  ({\pr}_{a} ^*\cO (j_a\psi_a^{tail}) \oplus {\mpr}_{a}^*\cO (-j_a\psi_a) )\ot {\tt P}^{j_a} \right)   \ar[u] \\
     {\xi}_P^{j_a} \ar[u]\ar@{^{(}->}[r]   & \pi_{\PP}^*\left({\pr}_{a} ^*P^{+, j_a}_{tail, d_a} \oplus  {\mpr}_{a} ^*{c}_{a}^*P^{-, j_a}_{d'_0} \right) \ar[u]  \\
    \pi_{\PP}^*\left({\pr}_{a} ^*P^{+, j_a+1}_{tail, d_a} \oplus  {\mpr}_{a} ^*{c}_{a}^*P^{-, j_a-1}_{d'_0} \right)\ar[r] \ar[r]_{=} \ar[u] 
& \pi_{\PP}^*\left( {\pr}_{a} ^*P^{+, j_a+1}_{tail, d_a} \oplus  {\mpr}_{a} ^*{c}_{a}^*P^{-, j_a-1}_{d'_0} \right)\ar[u]  \\
  0 \ar[u] & 0 \ar[u]  } \]
  
  where the horizontal arrows are injective as {\it maps of vector bundles}
  and $${\tt P}^{j_a}  := \pr ^*_a {\tt P}_{tail}^{ j_a}  \cong \mpr ^*_a c_a^* {\tt P}^{-, j_a} . $$

Similarly, we define $\xi_R^{j_a}$ as an extension, via
\[ \xymatrix{    0  &  0     \\
  \cO _{\PP_{j_a}}(-1) \ot \pi_{\PP}^* {\tt R}^{j_a} \ar@{^{(}->}[r] \ar[u] 
& \pi_{\PP}^*\left(  ({\pr}_{a} ^*\cO (j_a\psi_a^{tail}) \oplus {\mpr}_{a}^*\cO (-j_a\psi_a) )\ot  {\tt R}^{j_a} \right)   \ar[u] \\
     {\xi}_R^{j_a} \ar[u]\ar@{^{(}->}[r]   & \pi_{\PP}^*\left({\pr}_{a} ^*R^{+, j_a}_{tail, d_a} \oplus  {\mpr}_{a} ^*{c}_{a}^*R^{-, j_a}_{d'_0} \right) \ar[u]  \\
    \pi_{\PP}^*\left({\pr}_{a} ^*R^{+, j_a+1}_{tail, d_a} \oplus  {\mpr}_{a} ^*{c}_{a}^*R^{-, j_a-1}_{d'_0} \right)\ar[r] \ar[r]_{=} \ar[u] 
& \pi_{\PP}^*\left( {\pr}_{a} ^*R^{+, j_a+1}_{tail, d_a} \oplus  {\mpr}_{a} ^*{c}_{a}^*R^{-, j_a-1}_{d'_0} \right)\ar[u]  \\
  0 \ar[u] & 0 \ar[u]  } \]
  where
   $${\tt R}^{j_a}  := \pr ^*_a {\tt R}_{tail}^{j_a}  \cong \mpr ^*_a c_a^*{\tt R}^{-, j_a} . $$

Since
\begin{equation*} 
\xi^{j_a}:= \xi_P^{j_a}\oplus \xi_R^{j_a}
\end{equation*}
is canonically a subbundle of
$\pi_{\PP}^*\nu_{a}^*({F}^+_{d'}\oplus c^*{F}^-_{d'})$ whose rank is equal to the rank of ${F}^+_{d'}$,
it 
gives rise to a morphism $$\alpha_{j_a}:\PP_{j_a}\lra\Gr ({F}^+_{d'}\oplus c^*{F}^-_{d'})$$
which is birational onto its image and such that ${\xi}^{j_a} =\alpha_{j_a}^* {\zeta}$ (respecting the decompositions into $P$ and $R$ components).
We will show in \S \ref{sec:Gamma des pf} that  the image is a component of the limit fiber ${\Gamma}_\infty$ which we
denote by ${\Gamma}_{\infty, j_a}$ and which has multiplicity $j_a$ in the fiber.

\subsubsection{Description of $\alpha _{j_A}: \PP_{j_A} \to \Gr$ and the vector bundle ${\tt F}^{j_a}$ on $D_a$}
For general $A$ the above analysis extends immediately, as the various rational tails may be treated independently. Specifically, this
means that we now consider a collection $j_A:=\{j_a| a\in A\}$ of positive integers and define

$$ P^{+, j_A+1}_{tail, d_a} : = \boxplus_{a\in A} \pi_*  \left( \sP^{+}_{tail, d_a} (-(j_a+1)p_a^{tail})
\right) , $$ 
$$ R^{+, j_A+1}_{tail, d_a} : = \boxplus_{a\in A} \pi_*  \left( \sR^{+}_{tail, d_a} (- (j_a+1)p_a^{tail})
\right) $$ on $\prod _{a \in A} (Q^+_{0, a} (\PP (V\to \CC ^N), d_a ) \ti \PP (\CC ^N))$ 
and 
$$  P^{-, j_A-1}_{d'_0} : = \pi_* (   \sP^{-}_{ d'_0}  (\sum_{a\in A} (j_a-1) p_a ))  \cap   {b}_A ^* {P}^{-}_d , $$
$$  R^{-, j_A-1}_{d'_0} : = \pi_* (   \sR ^{-}_{ d'_0} (\sum_{a\in A} (j_a-1) p_a ))  \cap   {b}_A ^* {R}^{-}_d $$
on $U_{k+A, d'_0}^{-}$.
Further, we put
\begin{equation*}        
{F}^{+, j_A+1}_{tail, d_a}:=P^{+, j_A+1}_{tail, d_a}\oplus R^{+, j_A+1}_{tail, d_a},\;
{F}^{-, j_A-1}_{d'_0}:= P^{-, j_A-1}_{d'_0}\oplus R^{-, j_A-1}_{d'_0} .
\end{equation*}
Setting 
\begin{equation*} 
\PP_{j_A}:=\prod_{a\in A} \PP_{j_a}|_{{D}_A},\end{equation*}
where the product is fiber product over ${D}_A$,
we have the projection $\pi_{\PP}: \PP_{j_A}\ra {D}_A$ and extensions
\begin{align}\label{hat extension P}
&0\ra \pi_{\PP}^*({\pr}_{A} ^*P^{+, j_A+1}_{tail, d_a}\oplus {\mpr}_{A} ^*{c}_{A}^*P^{-, j_A-1}_{d'_0})\ra
{\xi}_P^{j_A}\ra \boxplus_{a\in A}(\cO_{\PP_{j_a}}(-1)\ot \pi_{\PP}^*{\tt P}^{j_a} )  \ra 0,\\
\label{hat extension R}
&0\ra \pi_{\PP}^*({\pr}_{A} ^*R^{+, j_A+1}_{tail, d_a}\oplus {\mpr}_{A} ^*{c}_{A}^*R^{-, j_A-1}_{d'_0})\ra
{\xi}_R^{j_A}\ra  \boxplus_{a\in A}(\cO_{\PP_{j_a}}(-1)\ot \pi_{\PP}^* {\tt R}^{j_a}) \ra 0,\\
\label{hat extension E}
&0\ra  \pi_{\PP}^*({\pr}_{A} ^*{F}^{+, j_A+1}_{tail, d_a} \oplus  {\mpr}_{A} ^*{c}_{A}^*{F}^{-, j_A-1}_{d'_0} )\ra {\xi}^{j_A}\ra 
\boxplus_{a\in A}(\cO_{\PP_{j_a}}(-1)\ot \pi_{\PP}^* {\tt F}^{j_a} )  \ra 0,
\end{align}
with
\begin{align}\label{eqn:def tF}
 {\xi}^{j_A}:={\xi}_P^{j_A}\oplus {\xi}_R^{j_A},  \ \ {\tt F}^{j_a} := {\tt P}^{j_a} \oplus  {\tt R}^{j_a}   .
\end{align}
As before, this gives a morphism $\alpha_{j_A}:\PP_{j_A}\ra\Gr$
such that ${\xi}^{j_A} =\alpha_{j_A}^* {\zeta}$. 
We will show in \S \ref{sec:Gamma des pf} that the image of $\alpha_{j_A}$, denoted $\Gamma_{\infty,j_A}$, is a component of
the limit fiber, with multiplicity $m_{j_A}:=\prod_{a\in A}j_a$.

\subsubsection{Proof of Theorem \ref{thm:Gamma des}}\label{sec:Gamma des pf}
The description of the components $\Gamma_{\infty, j_A}$ of $\Gamma _{\infty}$ supported over $D_{A}$, 
with their multiplicities, as well as the fact that they exhaust the special fiber, all follow from writing explicitly the map $\Phi$ in local coordinates in an analytic (or \'etale)  neighborhood of a general point $p$ of the boundary stratum $ D_{A}$. An explicit proof is as follows.

Choose an \'etale open neighborhood $\tt U$ of $U^+_{d'}$ such that $p$ is a closed point in the scheme ${\tt U}$.
Let $\hat{\cO}_p$ be the completion of
$\cO_{{\tt U}, p}$ and let $C$ be the fiber curve of $\pi$ over $p$. The curve $C$ has exactly $|A|$-many nodal points $q$.
Let $C_{tail, q}$ be the rational tail component of $C$ which meets $q$
and let $C_{main}$ be the remained component of $C$ so that $C = \cup_{q} C_{tail, q} \cup C_{main}$. We may express the completion $\hat{\cO}_q$ at the node as
$$\hat{\cO}_{q} \cong \hat{\cO}_{p} [[ x_q, y_q ]] / (x_qy_q - t_q) $$ with local defining equations $x_q\in \hat{\cO}_q$, 
$t_q \in \hat{\cO_p}$ of the divisors $\tilde{D}_a$, $D_{a}$ 
respectively.

Consider a commuting diagram of natural $\hat{\cO}_p$-module homomorphisms
\[ \xymatrix{   (\pi _* \left(\sP^+_{d'} \oplus \bigoplus _{i} \sR ^+_{i, d'} \right) )_p \ot \hat{\cO}_{p} \ar@{^{(}->}[r]_{\phi_1} \ar[d]_{\Phi _p \ot \mathrm{id}}^{=: \Phi _{\hat{p}}}  
                                     &   \oplus _q \left(\sP^+_{d'} \oplus \bigoplus _{i} \sR ^+_{i, d'} \right)_{q}  \ot \hat{\cO}_{q}    \ar[d]^{\oplus _q \Psi_q}   \\
  (\pi _*  \left( \sP^+_{d'} (d_a\tilde{D}_a)  \oplus \bigoplus _{i}  \sR^+_{i, d'} (l_id_a \tilde{D}_a  ) \right) )_p \ot \hat{\cO}_{p} \ar@{^{(}->}[r]_{\phi_2} 
                    &  \oplus _q \left(\sP^+_{d'} (d_a\tilde{D}_a) \oplus \bigoplus _{i} \sR ^+_{i, d'} (l_id_a\tilde{D}_a) \right)_{q} \ot \hat{\cO}_{q}  
                    } \]
where $\sP^+_{d'}:= \sL '_+\ot V \ot Q$, $\sR^+_{i, d'} := \oplus _{j=1}^N (\sL '_+)^{l_i} $ as in \eqref{twisting},
the horizontal maps $\phi_i$ are the restriction maps, and $\Psi _q$ are the natural maps. 

Since the horizontal restriction maps $\phi_i$ are injections, we will use $\oplus _q \Psi_q$ to express $\Phi _{\hat{p}}$ explicitly. 
For this, let us choose
a $\hat{\cO}_q$-basis $\{ e ^q_{0,j} \}_{j=1}^{(N-1)\dim V}$ of $\sP^+_{d', q} \ot \hat{\cO}_q$ 
and a $\hat{\cO}_q$-basis $\{ e ^q_{i, j} \}_{i=1, j=1}^{r, N}$ of $\oplus _i \sR^+_{i, q} \ot \hat{\cO}_q$. With respect to this basis, we have also a basis 
$\{ e ^q_{0,j} \ot x_q^{-d_a} \}_{j=1}^{(N-1)\dim V}$ of $\sP^+_{d', q}  \ot \hat{\cO}_q (d_a\tilde{D}_a) \cong \sP^+_{d', q} (d_a\tilde{D}_a) \ot \hat{\cO}_q$ 
and a basis $\{ e ^q_{i, j} \ot x_q^{-l_id_a}  \}_{i=1, j=1}^{r, N}$ of 
$\oplus _i \sR^+_{i, q}   \ot \hat{\cO}_q (l_id_a\tilde{D}_a) \cong \oplus _i \sR^+_{i, q} (l_id_a\tilde{D}_a)  \ot \hat{\cO}_q$.
With respect to these bases, the right vertical map $\Psi _q$ is the component-wise multiplication by $x_q^{d_a}, x_q^{l_1d_1}, ..., x_q^{l_rd_r}$.

Let $\mathbf{k}(p)$ be the residue field of $\cO_p$ and let $\bar{e}^q_{0, j}$, $\bar{e}^q_{i, j}$ be the restrictions in 
$(\sP^+_{d'} \oplus \bigoplus _{i} \sR ^+_{i, d'})_q \ot \hat{\cO}_q \ot \mathbf{k}(p)$ of $e^q_{0,j}$, $e^q_{i, j}$ respectively.
Choose also a $\mathbf{k}(p)$-basis $\mathcal{B}_{main}$ of $H^0 (C_{main}, (\sP^+_{d'} \oplus \bigoplus _{i} \sR ^+_{i, d'} )   |_{C_{main}} (-\sum_q q ) )$
by taking the union of some bases of $H^0 (C_{main}, \sP^+ _{d'}   |_{C_{main}} (-\sum_q q ) )$, $H^0 (C_{main},  \sR ^+_{i, d'}    |_{C_{main}} (-\sum_q q ) )$, $\forall i$.
Consider the following subset 
\begin{equation}\label{eqn:basis}
\{ \oplus _q s_q \} _{s\in \mathcal{B}_{main}} \cup \bigcup _q \{ \bar{e}^q_{0, j}, y_q \bar{e}^q_{0, j}, ..., y_q^{d_a}\bar{e}^q_{0, j}\}_{j=1}^{(N-1)\dim V} 
\cup \bigcup _q \{ \bar{e}^q_{i, j}, y_q \bar{e}^q_{i, j}, ..., y_q^{l_id_a} \bar{e}^q_{i, j}\} _{i=1,j=1}^{r, N}
\end{equation}  of  $\oplus _q (\sP^+_{d'} \oplus \bigoplus _{i} \sR ^+_{i, d'} )_q  \ot \hat{\cO}_q \ot \mathbf{k}(p)$.
Here $s_q$ denotes the stalk of $s$ at $q \in C_{main}$.
Note that \eqref{eqn:basis} is a $\mathbf{k}(p)$-basis of the subspace $H^0 (C,  (\sP^+_{d'} \oplus \bigoplus _{i} \sR ^+_{i, d'} ) |_{C} )$.
Extend this $\mathbf{k}(p)$-basis  \eqref{eqn:basis}
 to a basis of  $ (\pi _* (    \sP^+_{d'} \oplus \bigoplus _{i} \sR ^+_{i, d'}    ) )_p \ot \hat{\cO}_{p}$ as a $\hat{\cO}_p$-module, 
\begin{equation}\label{eqn:basis1} \{ \oplus _q \tilde{s}_q \} _{s\in \mathcal{B}_{main}} 
\cup \bigcup _q \{ e^q_{0,j},  y_q e^q_{0,j}, ..., y_q^{d_a} e^q_{0,j}\}_{j=1}^{(N-1)\dim V} \cup 
\bigcup _q \{ e^q_{i, j},  y_q e^q_{i, j}, ..., y_q^{l_id_a}e^q_{i, j}\} _{i=1,j=1}^{r, N} \end{equation}
where $\tilde{s} \in \pi_* (\sP ^+_{d'} \oplus \bigoplus _i \sR ^+_{i, d'}) \ot \hat{\cO}_p$ is an extension of $s$.

Let $l_0 = 1$ and let $l(s) = l_0$ for $s\in \mathcal{B}_{main}$ if $s$ comes from $\sP^+_{d'} |_{C_{main}} (-\sum_q q ) $, 
$l(s) = l_i$ if $s$ comes from $\sR^+_{i,d'} |_{C_{main}} (-\sum_q q ) $.
Choose also a basis of $ (\pi _* (  \sP^+_{d'} (d_a\tilde{D}_a)  \oplus \bigoplus _{i}  \sR^+_{i, d'} (l_id_a \tilde{D}_a )))_p \ot \hat{\cO}_{p}$ which is expressed 
via $\phi_2$ as 
\begin{eqnarray}\label{eqn:basis2}   & \{ \oplus _q \tilde{s}_q \}_{s\in \mathcal{B}_{main}}  \\ &   \cup \bigcup_q 
 \{x^{d_a}_q (e^q_{0,j} \ot x_q^{-d_a}),  x_q^{d_a-1} (e^q_{0,j} \ot x_q^{-d_a}), ...,  e^q_{0,j} \ot x_q^{-d_a}) \}_{j=1}^{(N-1)\dim V} \nonumber \\
&  \cup \bigcup_q
\{x_q^{l_id_a} (e^q_{i,j} \ot x_q^{-d_a}), x_q^{l_id_a -1} (e^q_{i,j} \ot x_q^{-l_id_a}), ...,  e^q_{i,j} \ot x_q^{-l_id_a})\}_{i=1, j=1}^{r, N} . \nonumber \end{eqnarray}
The map $\lambda \Phi _{\hat{p}}$  sends    
 \[ \oplus _q \tilde{s}_q \mapsto   \oplus_q \lambda  \tilde{s}_q, \text{ and }  y_q^k  e_{i, j}^q \mapsto \lambda t_q^k x_q^{l_id_a -k} (e_{i, j}^q \ot x_q^{-l_id_a}) , 
  \ i=0, 1, ..., r; k =0, 1, ..., l_id_a ; \forall j\]
so that with respect to the $\hat{\cO}_p$-bases \eqref{eqn:basis1} and \eqref{eqn:basis2}, $\lambda\Phi _{\hat{p}}$ is a diagonal matrix with
entries $\lambda$'s, $\lambda t_q^k$, $k=0, 1, ..., \mathrm{max}_{i=0, ..., r} \{ l_id_a \}$. 

Now according to the fate of $\lambda t_q^k$, $k=0, 1, ... $, as $\lambda \to \infty$ and $t_q\to 0$ $\forall q$, 
the cycle class $[\Gamma_{\infty}]$ can be easily identified yielding
the decomposition \eqref{special fiber 1} for each $A$. Namely, for the node $q$ corresponding to $a$, if $\lambda t_q^{j_a}$ goes to a nonzero number $w_a \in \CC$
for some $j_a$, then the limit of  $\text{graph} (\lambda \Phi )$ in the region is the  point 
$Point (j_a, w_a )_{a\in A}$ in $\Gr |_{p}$ corresponding to the direct sum of the following three subspaces $(i), (ii), (iii)$
\[ \begin{array}{lll}  (i) \, F^{+, j_A+1} _{tail, d_a} |_{\pr_A (p)} 
& =  \oplus _{a\in A} \oplus _{i, j} \lan y_q^{j_a+1} \bar{e}^q_{i, j}, ..., y_q^{l_id_a} \bar{e}^q_{i, j} \ran  & \subset F^+_{d'} |_p  ; \\
  (ii)\,  F^{-, j_A-1} _{d_0'} |_{\mpr_A\circ c_A (p)}   & = H^0(C_{main},   \sP^{-}_{d'}|_{C_{main}} (-\sum_{a} d_a q)   ) \oplus & \\
   & \   \oplus _{i\ge 1} H^0(C_{main},   \sR^{-}_{i, d'}|_{C_{main}} (-\sum_{a} l_id_a q)   ) \oplus 
                                              &   \\
             & \  \oplus _{a\in A} \oplus _{i, j}   \lan x_q^{l_i d_a} (\bar{e}^q_{i, j} \ot x_q^{-l_id_a}) , ..., x_q^{l_id_a - (j_a -1)} (\bar{e}^q_{i, j} \ot x_q^{-l_id_a})  \ran 
                                                  & \subset  c^*F^{-}_{d'}|_p ; \\
       (iii)\,    \oplus _{a\in A} \oplus _{i, j} y_q^{j_a} \bar{e}^q_{i, j} &  \oplus w_a x_q^{l_i d_a - j_a} (\bar{e}^q_{i, j} \ot x_q^{-l_id_a}) 
           \subset \pr_A^* F^{+, j_A}_{tail, d_a} |_{p} \oplus \mpr_A^* c_A^*F^{-, j_A}_{d_0'} |_{p} . &
\end{array} \]
It is clear that there is a natural correspondence between the irreducible components of $\Gamma _{\infty}$ and $Point (j_a, 1 )_{a\in A}$ $\forall j_A$. 
Denote by $\Gamma_{\infty, j_A}$ the component corresponding to $Point (j_a, 1 )_{a\in A}$.
 The intersection multiplicity of $\Gamma _{\infty} \cap \{ \lambda = \infty \}$ at $\Gamma_{\infty, j_A}$ 
  is $m_{j_A}:= \prod_{a\in A} j_a$ according to the equations $t_q^{j_a} =0$, $\forall a \in A$ in the open affine coordinate ring of $\Gr$ 
  around $Point (j_a, 1 )_{a\in A}$.

\subsubsection{Remark} 
Denoting by ${\bf e}$ the Euler class, \cite[Example 18.1.6]{Fu} gives
\begin{equation}\label{degeneration 1}
{\bf e}({F}^+_{d'})\cap [U^+_{d'}]-{\bf e}( c^*{F}^-_{d'})\cap [U^+_{d'}]=
\sum_{(A,j_A)} \frac{m_{j_A}}{|A|!}(\eta |_{\Gamma_{\infty,j_A}})_*({\bf e}(\zeta)\cap[\Gamma_{\infty,j_A}]).
\end{equation}
For $g=0$, when no twisting occurs, $U^{\pm}_{d'}$ reduces to $Q^\pm_{0,k}(\PP(V),d)$, while
$ F^{\pm}_{d'}=\pi_*(\oplus_{i=1}^r\sL_{\pm}^{l_i})$. After applying $c_*$, the left-hand side of \eqref{degeneration 1} becomes
precisely
$$c_*i_*[Q^{+}_{0,k}(X,d)]^{\mathrm{vir}}-i_*[Q^{-}_{0,k}(X,d)]^{\mathrm{vir}}.$$
On the other hand,
it is not too difficult to show\footnote{The argument is a considerably simplified version of the proof  of Theorem \ref{Degeneration2} in \S\ref{correcting class} below.} that 
the right-hand side can be written in the form 
$$\sum_A \frac{1}{|A|!}(b_A)_*(c_A)_*i_*\left(\prod_{a\in A} ev_a^*\mu_{d_a}(z)|_{z=-\psi_a} \cap [Q^{+}_{0,k+A}(X,d_0^A)]^{\mathrm{vir}}\right ) ,$$
for {\it some} polynomial Chow cohomology class $\mu_{d_a}(z)\in A^*(X)_\QQ [z]$. 
Combined with the identification of $\mu_{d_a}$ in \S\ref{identification} below, this proves 
for $X$ 
the weaker equality \eqref{geometric} in Conjecture \ref{MainConj} in genus zero.

\subsection{A refinement of the graph construction} 
The equality
\eqref{degeneration 1} may be viewed as a degeneration formula for the top Chern class of 
the vector bundle ${F}^+_{d'}$ on $U^{\pm}_{d'}$.
As a main step in our proof of Theorem \ref{Main}, we establish in this subsection a refined degeneration formula
which relates the Gysin pull-backs 
$0_{E_d^\ke}^! ([\mathbf{C}_{Q^\ke_{g, k}(X, d) / U^{\ke}_{d'}}])$ of the normal cones from Corollary 
\ref{concrete virtual class}.

\subsubsection{Deformation of the embedding \eqref{embedding diagram}}\label{sec:bar sigma}
The map $\Phi$ fits in the following commuting diagram
\[ \xymatrix{ {F}^+_{d'}   \ar[r]^{ \Phi}  
&  c^*{F}^-_{d'} \ar[r]   & {F}^-_{d'}  \\ 
                          U^+_{d'} \ar[r]_{=}  \ar[u]^{\sigma^{+}}         &    
 U^+_{d'} \ar[r]_{ c} \ar[u]_{ c^*(\sigma^{-})}    &  U^{- }_{d'}  \ar[u]_{\sigma^{- }}  } \] 
with $\sigma^{\pm}$ the canonical sections \eqref{sigma epsilon}. 
Recall that the zero locus of $\sigma^{\pm}$, call it $Y^{\pm}$, is identified with $Q^{\pm}_{g,k}(X,d)$. 
Denote by
$Z=Z_{g,k,d}$ the zero locus of $ c^*(\sigma^{-})=\Phi\circ \sigma^+$; in other words, $Z= c^{-1} (Q^-_{g,k}(X,d))$.
Observe that there is a closed embedding $Y^+\hookrightarrow Z$. 
\begin{Rmk}\label{compatible contractions} 
If we restrict ${c}$ further to $Y^+\subset Z$, the resulting map coincides with the contraction $c:Y^+\ra Y^-$ induced from
the natural embedding $X\subset\PP(V)$ and the contraction $c:Q^+_{g,k}(\PP(V),d)\ra Q^-_{g,k}(\PP(V),d)$. This follows from the fact that the
twisting line bundle $\sM$ is trivial on the rational tails.
\end{Rmk}

It turns out that it is better to consider the deformation of $Z$ induced by the family $\Gamma\ra\PP^1$.
To this end, consider the {\it universal quotient bundle} $\Upsilon$ on $\Gr$, so that
$$0\ra \zeta\ra \eta^*({F}^+_{d'}\oplus c^* {F}^-_{d'})\ra\Upsilon\ra 0.$$
is exact. 
As before, we also consider the universal quotient bundles $\Upsilon_P$ on $\Gr_P$ and $\Upsilon_R$ on $\Gr_R$.
{\em We will use the same notations for the induced vector bundles on $\Gamma$.}

 The section $\eta^*(\sigma^+, c^*\sigma^-)$ of $\eta^*({F}^+_{d'}\oplus c^*{F}^-_{d'})$ induces a section 
  \begin{equation*} 
 \overline {\sigma}\in H^0(\Gamma, \Upsilon)
 \end{equation*}
 of $\Upsilon$ on $ \Gamma$, via composition with the projection.
 
 Let
\begin{equation*} 
\Gamma ^0 := \overline{\sigma}^{-1}(0)\subset \Gamma \subset \Gr\times\PP^1.
\end{equation*}
be the zero locus of $\overline{\sigma}$.

As before, let $\Gamma^0_\lambda$ denote the fiber of $\Gamma^0$ over $\lambda\in\PP^1$. For $\lambda\neq 1, \infty$, under the isomorphism
$v_\lambda: U^+_{d'}\times \{ \lambda \} \ra \Gamma_\lambda$, the section $\overline{\sigma}$ corresponds to the section 
$(1-\lambda) c^*\sigma^-$ of $F^-_{d'}$. Hence, for $\lambda\notin \{1,\infty\}$, we get that $\Gamma^0_\lambda$ 
is isomorphic to $Z$. 

The fiber over $1\in\AAA^1$ is the entire $U^+_{d'}$, so from now on we will consider the families $\Gamma$ and $\Gamma^{0}$ {\it only over}
$\PP^1\setminus\{1\}$ (but will keep the same notation).

The fiber over $\infty\in\PP^1$ decomposes in the Chow group as
$$[\Gamma^0_{ \infty}]=[\Gamma^0_{ \infty, dist}] + \sum_{(A, j_A)} m_{j_A}[\Gamma^0_{\infty,j_A}],$$
with $\Gamma^0 _{\infty, dist} :=\Gamma_{\infty, dist}\times_{\Gamma} \Gamma ^0$ and 
$\Gamma ^0_{\infty,j_A} :=\Gamma_{\infty,j_A}\times_{\Gamma} \Gamma ^0$.

Note that on $\Gamma_{\infty, dist}= U^+_{d'}$ the quotient bundle $\Upsilon$ is equal to $\eta^*{F}^+_{d'}\oplus\{0\}$ and $\overline{\sigma}=(\sigma^+,0)$, 
hence $\Gamma^0_{\infty, dist}$ is identified with $Q^+_{g,k}(X,d)$, embedded as in \eqref{embedding diagram}.

\subsubsection{Deformation of the obstruction theory}\label{def of obs}

The normal cone $\bC_{\Gamma^0 /\Gamma}$ is a subcone of $\Upsilon|_{\Gamma^0}$.
 We claim that, possibly after a birational modification of the fiber $\Gamma_{\infty}$, 
 it actually sits inside a subbundle $\Upsilon^0$ of the ``correct" rank.

Recall the twisting line bundle $\sM$  on the universal curve $\fC^{\pm}_{g, k, d'}$ of $U^{\pm}_{d'}$ introduced in the beginning 
of \S \ref{U-embedding} and recall $s_j$ the sections $\tilde{ft}^*_{\pm}  \tau_j$ of $\sM$ where
$\tilde{ft}_{\pm} : \fC^{\pm}_{g, k, d'} \to \fC_{g, k}$ is the stabilization map; see \S \ref{twisting lb} for the definition of $\tau _j$. 
Here $\fC_{g, k}$ is the universal curve over $\overline{M}_{g, k}$.

On  the universal curve $\fC^+_{g,k,d'}$ over $U^+_{d'}$, 
there is a vector bundle monomorphism
\[ \sP^+_{d'}  \hookrightarrow  \sP ^{+ }_{d', big} := \sL '_+\ot \sM \ot V \ot \CC ^{\binom{N}{2}} \]
induced from the homomorphism 
\begin{align*} \oplus _j \sL ' _+ \ot V   \ra  \sP ^{+ }_{d', big} ,  \quad
                         (v_j)_{j=1}^N   \mapsto  \oplus _{j_1> j_2} (s_{j_1} v_{j_2} - s_{j_2} v_{j_1} ) . \end{align*}
Similarly there are vector bundle monomorphisms
\begin{align*} & \sP ^{-}_{d'} \hookrightarrow\sP ^{-}_{d' , big} := \sL '_-\ot \sM \ot V \ot \CC ^{\binom{N}{2}}  ; \\
        &  \sQ ^{\pm}_{d'} \hookrightarrow \sQ ^{\pm}_{d', big} := \oplus _i (\sL' _{\pm}\ot \sM )^{l_i} \ot \CC ^{\binom{N}{2}}  .  \end{align*}

We replace the stack ${\Gamma}$  by the closed substack ${\Gamma}^{new}$ of the product $\Gr^{new}\times\PP^1$ defined via the MacPherson graph construction, where 
$\Gr^{new}$ is now the fibered product over $U^+_{d'}$
of the various Grassmann bundles:
\begin{align}\label{new Gr}
\Gr^{new}=& \mathbf{Gr} ( \pi _* \sP^+_{d'} \oplus  {c}^*\pi _* \sP ^-_{d'}) \times_{U^+_{d'}} \mathbf{Gr} (\pi_*\sR ^+_{d'}
 \oplus  {c}^*\pi_* \sR ^-_{d'})
\\ \nonumber \times_{U^+_{d'}} &
\mathbf{Gr} ( \oplus _j \pi _* \sL ' _{+}\ot V  \bigoplus \oplus _j {c}^* \pi _* \sL '_-  \ot V  )  
\times_{U^+_{d'}} \mathbf{Gr}  (\pi_*\sQ^{+}_{d'} \oplus  {c}^*\pi _*  \sQ^{-}_{d'} ) \\ \nonumber
\times_{U^+_{d'}} & \mathbf{Gr} ( \pi _* \sP^+_{d', big} \oplus {c}^* \pi _* \sP^-_{d', big}  )\times_{U^+_{d'}} 
\mathbf{Gr} (\pi _* {\sQ ^+_{d', big}}\oplus {c}^* \pi _* \sQ ^-_{d', big})  .
\end{align}
The projection onto the first two factors induces a birational morphism $p_{12}:{\Gamma}^{new}\ra {\Gamma}$, which is an isomorphism outside $\infty\in\PP^1$.

Denote by  $\Upsilon_{\oplus_j \sL ' \ot V }, \Upsilon _{\sP_{big}} ,  \Upsilon_{\sR} ,
 \Upsilon_{\sQ_{big}}, ... $ 
 the universal quotient bundles on ${\Gamma}^{new}\subset\mathbf{Gr}^{new}\times \PP ^1$ obtained via pull-back from the third, the fifth, the second, the sixth,
... factor of $\Gr^{new}$ respectively. Similarly, 
denote by $\zeta_{\oplus _j \sL ' \ot V }, ...$, the universal subbundles on ${\Gamma}^{new}$.
Recall that $\Upsilon _{\sP }$ and $\Upsilon_{\sR }$ come with the sections $\overline{\sigma}_P$ and $\overline{\sigma}_R$, the components of the section $\overline{\sigma}$ of 
$${\Up}={\Upsilon} _{\sP }\oplus {\Upsilon} _{\sR}$$ (see \S \ref{sec:bar sigma}).  We set 
$$\Gamma^{new, 0} = \overline{\sigma} ^{-1}(0).$$

As in the case when we had only the fibered product of the first two relative Grassmannians, 
for each $j_A$ 
there is a natural morphism $$\alpha_{j_A}^{new}:\PP _{j_A}\lra\mathbf{Gr}^{new}\times\{\infty\},$$ which has generic degree $|A|!$ to the image and such that the relation
\eqref{special fiber 1} still holds for the new special fiber (in other words, the birational modification $p_{12}:{\Gamma}^{new}\ra {\Gamma}$ does {\it not} introduce additional components over $\infty\in\PP^1$). These morphisms are obtained by constructing extensions analogous to \eqref{hat extension P}
and \eqref{hat extension R} 
for the remaining four factors in \eqref{new Gr}. We have $\alpha_{j_A}=p_{12}\circ\alpha_{j_A}^{new}$.
Our proof of Theorem \ref{Main} will eventually reduce to intersection-theoretic computations performed after transfering everything to the $\PP _{j_A}$'s.
Hence it is harmless to drop from now on the superscript ``new" from the notations for $\Gr$, $\Gamma$, $\Gamma$ etc.

 We are now ready to construct the required vector bundle $\Up$.
 Define two homomorphisms 
\begin{align*} d \varphi _{\pm, big} : \sP^{\pm}_{d', big}  \ra \sQ^{\pm}_{d', big},  \quad
                                         (v_{j_1,j_2}) \mapsto \oplus _{i}\oplus _{j_1>j_2} \nabla \varphi _i (s_{j_1} u_{j_2}' )\cdot v_{j_1,j_2}  .
\end{align*}
where $\oplus _j u'_{ j}$ is the universal sections of $\oplus _j \pi_* \sL'_{\pm} \ot V$ as in \eqref{eqn:def u'}.
 
On ${\Gamma}$, there is a natural diagram
\begin{align}\label{for_corr} \xymatrix{ \Upsilon_{\oplus_j \sL ' \ot V} \ar[r]  \ar[d] &  \Upsilon _{\sP_{big} } \ar[d]^{\overline{\pi_* d \varphi_{big} }} \\
               \Upsilon _{\sR } \ar[r]  &  \Upsilon _{\sQ_{big} }  }\end{align}
               which is not necessarily commutative.  
               Here $\overline{\pi_* d \varphi_{big} }$ is the homomorphism induced from $d\varphi _{\pm , big}$ via push-forward. 
               The remaining three arrows are all constructed by the same procedure. For example, the top horizontal homomorphism is obtained as follows. The composition
               of natural maps 
               $${\zeta}_{\oplus _j \sL ' \ot V } \ra \eta ^* (\pi _* \sP^+_{d'}  \oplus c^*\pi _* \sP^-_{d'})
               \ra \eta ^* (\pi _* \sP^+_{d', big} \oplus c^*\pi _* \sP^{-}_{d', big} ) \ra {\Upsilon}_{\sP _{big}}$$ vanishes 
               on ${\Gamma}\setminus {\Gamma}_{\infty}$ and hence vanishes on the closure ${\Gamma}$.

Let $\tilde{\eta}$ denote the composition 
of natural maps $\Gamma \to \Gr \ti (\PP ^1\setminus \{1\}) \to U^+_{d'}$.

\begin{Lemma}\label{lem:barsigma_P sigma_P} The following hold.
\begin{enumerate}
\item The zero locus of the $P$-component $\overline{\sigma} _P$ of $\overline{\sigma}$ is contained in the zero locus of $\tilde{\eta}^* \sigma ^{-}_{P}$ (see
\eqref{sigma epsilon} for the definition of $\sigma ^{-}_{P}$).

\item $(\sigma ^+_P)^{-1} (0) = Q^+_{g, k}(\PP (V), d) = (c^* \sigma ^-_P)^{-1} (0)$

\item  The diagram \eqref{for_corr} becomes commutative
when it is restricted to $\overline{\sigma} _P^{-1}(0)$.
\end{enumerate}
\end{Lemma}

\begin{proof}
(1) Consider the homomorphism of locally free sheaves 
\[ (\eta ^* P^+_{d'} \oplus \eta ^*c^* P^-_{d'} ) \boxplus \cO_{\PP ^1\setminus \{1\}} \to \eta ^*c^* P^-_{d'} \boxplus (\cO_{\PP ^1}(1))|_{\PP^1 \setminus \{1\}} , \ 
(v^+, v^-) \mapsto \lambda _0 \Phi (v^+) - \lambda _1 v^- ,\]
where $\lambda _0, \lambda _1$ denote homogeneous coordinates of $\PP ^1$.
Since $\zeta |_{\Gamma}$ is contained in the kernel of the above homomorphism, there is a map 
$\Upsilon _P \to  \eta ^*c^* P^-_{d'} \boxplus (\cO_{\PP ^1}(1) ) |_{\PP^1 \setminus \{1\}} $, under which the section $\overline{\sigma}_P$ goes to
$(\lambda _0 - \lambda _1) c^*\sigma ^-_P$.
Therefore the zero locus of $\overline{\sigma} _P$ is contained in the zero locus of $\tilde{\eta}^* \sigma ^-_P$.

(2) The first equality is clear. The second equality is the claim $$Q^+_{g, k}(\PP (V), d)  = c^{-1} (Q^{-}_{g, k}(\PP (V), d)) .$$
The claim is obvious since for a $T$-family of $\ke _+$-stable quasimaps to $\PP(V \ot \CC ^N)$, it is a $T$-family of 
$\ke _+$-stable quasimaps to $\PP(V)$ if and only if the family restricted to every geometric point of the test scheme $T$ is a $\ke _+$-stable quasimaps to $\PP(V)$.

(3) The diagram \eqref{for_corr} is by definition induced, by the pullback $\tilde{\eta}^*$,  
from the diagram of homomorphisms of locally free sheaves on $U^+_{d'}$ 
\begin{align}\label{for_corr orig} \xymatrix{ \pi_* \oplus_j \sL ' _{+} \ot V \oplus c^* \pi_* (\oplus_j \sL '_{-} \ot V)  \ar[r]  \ar[d] &  \pi_* \sP ^+_{big} 
  \oplus c^* \pi_* \sP ^-_{big}  \ar[d]^{\pi_* d \varphi_{+, big} \oplus c^*\pi_* d \varphi_{-, big} } \\
              \pi_* \sR ^+_{d'}  \oplus c^*\pi_* \sR ^-_{d'} \ar[r]  &  \pi_* \sQ ^+_{big}  \oplus  c^* \pi_* \sQ^-_{big}  . } \end{align}
The diagram \eqref{for_corr orig} is commutative on the zero locus $Q^+_{g, k}(\PP(V), d)$ of the section $\sigma ^+_P$ since
the difference of the clockwise path and the counterclockwise path in each $\pm$-component
\begin{eqnarray*}  & & \oplus _i \left( \nabla \varphi_i (s_{j_1} u_{j_2}') \cdot (s_{j_1} v_{j_2} - s_{j_2} v_{j_1} )-  
(s_{j_1}^{l_i} \nabla \varphi _i (u_{j_2}') \cdot  v_{j_2} -  s_{j_2}^{l_i} \nabla \varphi _i (u_{j_1}')  \cdot  v_{j_1} ) \right) \\
& = &  \oplus _i \left(- \nabla \varphi_i (s_{j_1} u_{j_2}') \cdot s_{j_2} v_{j_1} +  \nabla \varphi _i (s_{j_2} u_{j_1}')   \cdot s_{j_2} v_{j_1} \right) \end{eqnarray*}
vanishes for the universal section $(u'_j)_j$ of $\oplus _j \sL '_{\pm} \ot V$ with the vanishing condition $s_{j_1}u_{j_2}' - s_{j_2}u_{j_1}' = 0$. 
Hence it is enough to show that the zero locus of $\overline{\sigma} _P$ contained in $\Gamma \ti _{U^+_{d'}} (\sigma ^+_P)^{-1} (0)$.
This follows from (1) and (2) above. 
\end{proof}

 In particular, the diagram \eqref{for_corr} commutes when restricted to $\Gamma^0$.
Since the horizontal maps factor through ${\Upsilon}_{{\sP}}$ and ${\Upsilon}_{{\sQ}}$, it follows that on $\Gamma^0$ we have 
the commuting diagram
\[ \xymatrix{ \Upsilon_{\oplus _j \sL ' \ot V } |_{\Gamma ^0 } \ar[r]  \ar[d] &  \Upsilon _{\sP} |_{\Gamma ^0 } \ar[d]^-{f_{\Upsilon}} \\
               \Upsilon _{\sR} |_{\Gamma ^0 } \ar[r]^-{\alpha_{1, \Upsilon}}  &  \Upsilon _{\sQ } |_{\Gamma ^0 } ,  } \]
where $f_{\Up} =\overline{\pi_* d \varphi_{big} }|_{\Upsilon _{\sP}|_{\Gamma ^0}}$.
The map of vector bundles
$$\gamma
: (\Up={\Upsilon} _{\sP}   \oplus  {\Upsilon} _{\sR})|_{\Gamma ^0 }  \ra  {\Upsilon} _{\sQ} |_{\Gamma ^0 } ,\;\;\; \gamma(x,y)=f_{\Upsilon}(x)-\alpha_{1, \Upsilon} (y) $$
is surjective since it is so at each closed point of $\Gamma$ (this needs to be checked at points on the special fiber $\Gamma_\infty$,  where it follows by pulling-back to the appropriate $\PP_{j_A}$ and using the description of the three universal quotient bundles as extensions, as in e.g. \eqref{univ quotient} below).
Define the required vector bundle on $\Gamma$ to be $$\Up ^0:=\ker \gamma .$$

\begin{Lemma}\label{subcone} The 
normal cone $\bC_{\Gamma^0/{\Gamma}}$ is a subcone of $\Upsilon ^0$.
\end{Lemma}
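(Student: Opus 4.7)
Since $\Gamma = \overline{\sigma}^{-1}(0) \subset \hat{\Gamma}$, the normal cone $\mathbf{C}_{\Gamma/\hat{\Gamma}}$ embeds canonically into $\hat{\Upsilon}|_\Gamma$ via the surjection $\overline{\sigma}^\vee : \hat{\Upsilon}^\vee|_\Gamma \twoheadrightarrow \mathcal{I}_\Gamma/\mathcal{I}_\Gamma^2$, where $\mathcal{I}_\Gamma$ is the ideal sheaf of $\Gamma$ in $\hat{\Gamma}$. Factoring this embedding through $\Upsilon = \ker(\gamma)$ is equivalent to showing that the composite
\[
\hat{\Upsilon}_{\mathcal{K}'}^\vee|_\Gamma \xrightarrow{\gamma^\vee} \hat{\Upsilon}^\vee|_\Gamma \xrightarrow{\overline{\sigma}^\vee} \mathcal{I}_\Gamma/\mathcal{I}_\Gamma^2
\]
vanishes. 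The plan is to produce a morphism of vector bundles $\tilde{\gamma}: \hat{\Upsilon} \to \hat{\Upsilon}_{\tilde{\mathcal{K}}'}$ defined on all of $\hat{\Gamma}$ with two properties: (i) the restriction $\tilde{\gamma}|_\Gamma$ equals $\iota \circ \gamma$ for some vector bundle monomorphism $\iota : \hat{\Upsilon}_{\mathcal{K}'}|_\Gamma \hookrightarrow \hat{\Upsilon}_{\tilde{\mathcal{K}}'}|_\Gamma$, and (ii) $\tilde{\gamma} \circ \overline{\sigma} = 0$ identically on $\hat{\Gamma}$. Given (i)--(ii), the identity $\overline{\sigma}^\vee \circ \tilde{\gamma}^\vee = 0$ holds; since $\iota^\vee$ is a fiberwise surjection of locally free sheaves this forces $\overline{\sigma}^\vee \circ \gamma^\vee = 0$, as required.

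For the construction of $\tilde{\gamma}$, the diagram \eqref{for_corr} is defined on all of $\hat{\Gamma}$, and the vector bundle monomorphisms $\mathcal{V}' \hookrightarrow \tilde{\mathcal{V}}'$ and $\mathcal{K}' \hookrightarrow \tilde{\mathcal{K}}'$ introduced at the beginning of \S\ref{def of obs} are compatible with the MacPherson graph construction. Consequently, on the refinement $\mathbf{Gr}^{new}$ of \eqref{new Gr} the universal sub-bundles fit into commuting inclusions $\hat{\zeta}_{\mathcal{V}'} \hookrightarrow \hat{\zeta}_{\tilde{\mathcal{V}}'}$ and $\hat{\zeta}_{\mathcal{K}'} \hookrightarrow \hat{\zeta}_{\tilde{\mathcal{K}}'}$; the snake lemma then yields natural sub-bundle monomorphisms $\iota_V : \hat{\Upsilon}_{\mathcal{V}'} \hookrightarrow \hat{\Upsilon}_{\tilde{\mathcal{V}}'}$ and $\iota_K : \hat{\Upsilon}_{\mathcal{K}'} \hookrightarrow \hat{\Upsilon}_{\tilde{\mathcal{K}}'}$ on $\hat{\Gamma}$. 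The desired lift is
\[
\tilde{\gamma}(x, y) := \overline{\pi_{*}\tilde{\nabla}\varphi} \circ \iota_V(x) - \iota_K \circ \alpha_1(y).
\]
Property (i) with $\iota = \iota_K$ follows from the commutativity of \eqref{for_corr} on $\overline{\sigma}_P^{-1}(0) \supset \Gamma$, the factorization of its horizontal arrows through $\hat{\Upsilon}_{\mathcal{V}'}$ and $\hat{\Upsilon}_{\mathcal{K}'}$, and the defining formula $\gamma(x, y) = f(x) - \alpha_1(y)$.

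For (ii), by the irreducibility of $\hat{\Gamma}$ it suffices to verify $\tilde{\gamma}(\overline{\sigma}) = 0$ on the dense open locus $\hat{\Gamma} \cap (\mathbf{Gr}^{new} \times (\AAA^1 \setminus \{1\}))$. On a fiber $\hat{\Gamma}_\lambda$ there, the identification $v_\lambda : U^+_{d, d'} \xrightarrow{\sim} \hat{\Gamma}_\lambda$ pulls $\overline{\sigma}$ back to $(1 - \lambda)\hat{c}^*\sigma^-$, so it is enough to establish $\tilde{\gamma}(\hat{c}^*\sigma^-) = 0$ on $U^+_{d, d'}$. Pulling back further to the universal curve via \eqref{twisting}, the identity reduces to the tautological vanishing of $\tilde{\nabla}\varphi$ on antisymmetrized expressions $(s_i u_j' - s_j u_i')_{i > j}$ in $\tilde{\mathcal{V}}'$ modulo the image of $\oplus_{i, j}(\mathcal{L}')^{l_i}$, an identity encoded in the very definitions of $\tilde{\mathcal{V}}'$ and $\tilde{\nabla}\varphi$. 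The principal obstacle I anticipate is the rigorous verification that the snake-lemma construction yields genuine sub-bundle inclusions (locally direct summands) $\iota_V$ and $\iota_K$ over the special fiber $\hat{\Gamma}_\infty$, where the refined graph construction is most degenerate; the subsidiary technical point is the bookkeeping that transports the tautological identity on the universal curve into the vanishing of the pushed-forward global section $\tilde{\gamma}(\overline{\sigma})$ of $\hat{\Upsilon}_{\tilde{\mathcal{K}}'}$.
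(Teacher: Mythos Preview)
Your approach is the same as the paper's at the essential level: both reduce to showing that the composite $(\hat\Upsilon_{\sK'})^\vee \to \hat\Upsilon^\vee|_\Gamma \to \sI_\Gamma/\sI_\Gamma^2$ vanishes. The paper records exactly this statement and leaves its verification as ``straightforward to check''; you attempt to supply that check via an explicit extension $\tilde\gamma$ and a density argument.

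A few remarks on your execution. First, your reduction to properties (i) and (ii) is sound: if some extension $\tilde\gamma$ of $\iota_K\circ\gamma$ to all of $\hat\Gamma$ satisfies $\tilde\gamma(\overline\sigma)=0$, then the local generators of $\sI_\Gamma$ are sent to zero in $\sI_\Gamma/\sI_\Gamma^2$ under $\tilde\gamma^\vee$, and the surjectivity of $\iota_K^\vee$ gives the desired vanishing for $\gamma^\vee$. Second, note that you do not actually need $\tilde\gamma(\overline\sigma)$ to vanish identically on $\hat\Gamma$; it suffices that it lie in $\sI_\Gamma^2\cdot\hat\Upsilon_{\tilde\sK'}$. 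This weaker statement is all the paper's one-line argument requires, and it is also what the Euler-type identities $\nabla\varphi_m(w)\cdot w = l_m\varphi_m(w)$ most directly yield. Your stronger claim (identical vanishing) would need the map $\overline{\pi_*\tilde\nabla\varphi}$ on $\hat\Gamma_\lambda$ to interact with $\hat c^*\sigma^-$ in a very specific way, and your justification (``tautological vanishing of $\tilde\nabla\varphi$ on antisymmetrized expressions'') does not quite pin this down --- you should unwind explicitly what $\overline{\pi_*\tilde\nabla\varphi}$ is under the identification $v_\lambda$, since it is built from \emph{both} $\tilde\nabla\varphi_+$ and $\tilde\nabla\varphi_-$, which use different tautological sections $u'_\pm$.

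Your concern about the sub-bundle inclusions $\iota_V,\iota_K$ over $\hat\Gamma_\infty$ is legitimate but can be sidestepped: since the bundles $\hat\Upsilon_{\sK'}$ and $\hat\Upsilon_{\tilde\sK'}$ arise from the refined graph construction \eqref{new Gr}, the inclusion $\sK'\hookrightarrow\tilde\sK'$ on the universal curve induces compatible sub-bundle inclusions fiberwise by construction (this is precisely why the extra Grassmannian factors were introduced), and one may also check it after pulling back by the $\alpha_{j_A}$, where everything is explicit.
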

\begin{proof} Let $\sI_{\Gamma ^0}$ denote the defining ideal sheaf of the closed substack $\Gamma ^0$ of ${\Gamma}$.
We will check that the induced homomorphism $ ({\Upsilon}_{{\sQ}})|_{\Gamma ^0}^\vee \ra \sI _{\Gamma^0} /\sI _{\Gamma^0}^2 $ 
is identically zero.
For this consider the commuting diagram
\[ \xymatrix{  ({\Upsilon}_{{\sQ _{big}}})^\vee  \ar[r] \ar[d] & \Up ^{\vee} \ar[d] \ar[r] &  \sI _{\Gamma^0} \ar@{^{(}->}[d] \\
                    \tilde{\eta} ^*(  \pi_*\sQ^{+}_{d', big} \oplus c^*\pi_*\sQ^{-}_{d', big} )^{\vee}  \ar[r] &  \tilde{\eta}^* (F^+_{d'}\oplus c^*F^{-}_{d'} )^{\vee}   \ar[r]      & \cO_{\Gamma} ,  } \]
                    where $\tilde{\eta}$ denotes the composition $\Gamma \to \Gr \ti (\PP ^1\setminus \{1\}) \to U^+_{d'}$.
By the above commuting diagram and the surjection 
$ ({\Upsilon}_{{\sQ _{big}}})|_{\Gamma ^0}^\vee \twoheadrightarrow  ({\Upsilon}_{{\sQ}})|_{\Gamma ^0}^\vee $, 
 it is enough to show that the composition of the bottom arrows lands in $\sI _{\Gamma^0}^2$.
On the other hand $\tilde{\eta}^*\mathrm{Im}(\sigma _{P}^{+ \vee}) \subset \sI _{\Gamma^0}$ by Lemma \ref{lem:barsigma_P sigma_P} (1).
Here we view the dual $\sigma _P^{+ \vee}$ of $\sigma _P^{+}$ as the cosection $\sigma_P^{+ \vee} :     (P_{d'}^{+})^{\vee} \to   \cO_{U^+_{d'}}$.
Hence by Lemma \ref{lem:barsigma_P sigma_P} (2) 
 it is enough to check that the composition $comp$ of $(\pi_*\sQ^{\pm}_{d', big})^{\vee} \to (F^{\pm}_{d'} )^{\vee} \to \cO_{U^{\pm}_{d'}}$ 
lands in $(\mathrm{Im} \sigma _P^{\pm \vee} )^2$.
This is easy to check as follows. Recalling the definition of $\sigma ^{\pm}_R$, $\sigma ^{\pm}_P$ in \eqref{sigma epsilon}, note that, for $\delta \in  (\pi_*\sQ^{\pm}_{d', big})^{\vee}$
\begin{eqnarray*}  comp (\delta) 
& = & \lan \delta, \oplus_i \oplus _{j_1 > j_2}  \nabla \varphi_i (s_{j_1}  u'_{j_2} ) \cdot (s_{j_1} u'_{j_2} - s_{j_2} u'_{j_1} )  
-   (\varphi _i ( s_{j_1} u'_{j_2})  -  \varphi _i ( s_{j_2} u'_{j_1}))   \ran  \\
& \in & (\mathrm{Im} \sigma _P^{\pm \vee} )^2 .
\end{eqnarray*}
Here the last line is due to the Taylor expansion of the last term $\varphi _i ( s_{j_2} u'_{j_1})$ in the first line:
$$\varphi _i ( s_{j_2} u'_{j_1}) =  \varphi _i ( s_{j_1} u'_{j_2}) + \nabla \varphi _i ( s_{j_1} u'_{j_2}) \cdot (s_{j_2} u'_{j_1} - s_{j_1} u'_{j_2} ) 
$$ modulo the square of the ideal $\mathrm{Im}\sigma _P^{\pm \vee}$ generated by $s_{j_2} u'_{j_1} - s_{j_1} u'_{j_2} $.
\end{proof}

By construction, on the fiber $\Gamma^0_0 := \Gamma^0 \ti_{\Gamma} \Gamma _0$ we have 
$$\Up ^0 |_{\Gamma^0_0}=c^*E_d^-,$$ 
while on the distinguished component $\Gamma^0_{\infty,dist}:=  \Gamma^0 \ti _{\Gamma} \Gamma _{\infty, dist}$ over $\lambda=\infty$,
$$\Up ^0 |_{\Gamma^0_{\infty,dist}}=E_d^+,$$
with $E_d^\pm$ as defined in \eqref{E1 term}.
 
\subsubsection{Refined degeneration formula}

Consider the diagram, whose squares are all cartesian,

\[ \xymatrix{        \lambda \ar[d]_{\lambda}     &  \ar[l]  \Gr_Z  \ar[d]  & \ar[l]_{\ \iota _\lambda}   \Gamma^0_\lambda \ar[r]\ar[d] & 
\bC_{\Gamma ^0/ {\Gamma}} |_\lambda \ar[d]         \ar[r]  & \lambda   \ar[d]^{\lambda}  \\
     \PP ^1 \setminus \{1\} &  \ar[l]   \Gr_Z\times ( \PP ^1  \setminus \{1\})    & \ar[l]^-{\iota}  \Gamma^0  \ar[r]  \ar[d] & 
     \bC_{\Gamma ^0 / {\Gamma}}  \ar[d] \ar[r] & \PP ^1  \setminus \{1\}  \\
                            &  &  \Gamma ^0  \ar[r]^-{0}  &   \Upsilon ^0 & } \]
 where $\Gr_Z$ denotes the relative Grassmannian $\Gr$ restricted to $Z$, with projection $\eta|_Z:\Gr_Z\ra Z$.

 \begin{Lemma}\label{degeneration for cones} In $A_*(Z)_{\QQ}$ we have the equality
 \begin{align}\label{refinedGraph}
 &(\eta |_{Z}) _* (\iota _0)_* 0^!_{\Up^0 |_{\Gamma ^0_0}}([\bC_{\Gamma^0_0 / \Gamma_0}  ]) - 
 (\eta |_{Z}) _* (\iota _{\infty})_*( 0^!_{\Up^0 |_{\Gamma ^0_{\infty, dist}}}([ \bC_{dist} ])) =\\ 
\nonumber & \sum _{(A,j_A)} m_{j_A}
 (\eta |_{Z}) _*  (\iota _{\infty})_* ( 0^!_{\Up^0 |_{\Gamma ^0_{\infty, j_A}}}([\bC_{ j_A}] )) , \end{align}
where $\bC_{dist}$ is the normal cone $\bC_{\Gamma ^0_{\infty, dist}/\Gamma_{\infty, dist}}$ and
$\bC_{j_A}$ is the normal cone $\bC_{\Gamma ^0_{\infty, j_A} / \Gamma _{\infty,j_A}}$. 

 \end{Lemma}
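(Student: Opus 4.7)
The plan is to deduce \eqref{refinedGraph} from a rational equivalence among the fibers of the family $\Gamma \to \PP^1\setminus\{1\}$, lifted to an equivalence of normal-cone cycles inside $\Upsilon$, and then converted to the desired identity in $A_*(Z)_\QQ$ by applying $0^!_\Upsilon$ and pushing forward. First I would note that $\hat\Gamma_0$ and $\hat\Gamma_\infty$ are pullbacks of the rationally-equivalent points $\{0\}, \{\infty\}$ on $\PP^1\setminus\{1\}$, and so are rationally equivalent Cartier divisors on $\hat\Gamma$; combined with the decomposition of the special fiber in \eqref{special fiber 1}, this yields
\begin{equation*}
[\hat\Gamma_0] \sim [\hat\Gamma_{\infty,dist}] + \sum_{(A,j_A)} m_{j_A}[\hat\Gamma_{\infty,j_A}]
\quad \text{in } A_*(\hat\Gamma)_\QQ .
\end{equation*}

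Next I would restrict this equivalence to the closed substack $\Gamma \subset \hat\Gamma$. The key input is the standard Cartier compatibility of normal cones: if $D \subset Y$ is a Cartier divisor and $X \subset Y$ is a closed subscheme meeting $D$ properly, then $\bC_{X \cap D/D}$ equals the Cartier restriction $\bC_{X/Y}|_{X\cap D}$ as a cycle. Applied to $(\Gamma, \hat\Gamma)$ with each of the divisors $\hat\Gamma_0$, $\hat\Gamma_{\infty, dist}$, $\hat\Gamma_{\infty, j_A}$, this lifts the above equivalence to
\begin{equation*}
[\bC_{\Gamma_0/\hat\Gamma_0}] \sim [\bC_{dist}] + \sum_{(A,j_A)} m_{j_A}[\bC_{j_A}]
\quad \text{in } A_*(\Upsilon)_\QQ ,
\end{equation*}
where both sides are supported in the subcone $\bC_{\Gamma/\hat\Gamma} \subset \Upsilon$ provided by Lemma \ref{subcone}.

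Finally I would apply the Gysin pullback $0^!_\Upsilon : A_*(\Upsilon) \to A_*(\Gamma)_\QQ$, which preserves rational equivalence. Each term on the right is supported in a subbundle $\Upsilon|_{\Gamma_*}$, so $0^!_\Upsilon$ acts as the corresponding $0^!_{\Upsilon|_{\Gamma_*}}$ on that piece. Pushing forward via $(\iota_*)_*$ and $(\eta|_Z)_*$ then produces exactly \eqref{refinedGraph}.

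The main obstacle is the middle step: since $\hat\Gamma_\infty$ is reducible and non-reduced, with $\hat\Gamma_{\infty,j_A}$ appearing with multiplicity $m_{j_A} = \prod_{a\in A} j_a$, one cannot directly invoke the Cartier-pullback identity for individual components of $\hat\Gamma_\infty$. I would handle this by passing to an \'etale (or analytic) neighborhood of a generic point of each boundary divisor $\hat D_{\{a\}}$, where the explicit extensions \eqref{hat extension P}--\eqref{hat extension E} and the birational refinement underlying the current $\hat\Gamma$ (formerly $\hat\Gamma^{new}$) produce local coordinates in which each component is a locally principal subscheme carrying its expected multiplicity. This local analysis reduces the cone-compatibility statement to the standard Cartier case, and then the global identity follows by compatibility of Gysin pullback with proper pushforward.
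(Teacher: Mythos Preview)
Your overall plan---deduce the identity from the rational equivalence of the fibers over $0$ and $\infty$ in $\PP^1\setminus\{1\}$, transfer it to normal-cone classes, then apply $0^!_\Upsilon$ and push forward---matches the paper's strategy. The gap is in your middle step. The ``standard Cartier compatibility of normal cones'' you invoke, namely that $D^![\bC_{X/Y}] = [\bC_{X\cap D/D}]$ for a Cartier divisor $D\subset Y$, is not a one-line citation; it is exactly the content the paper isolates as the separate Lemma~\ref{limiting} and proves by a double deformation-to-normal-cone argument. One forms $M^\circ_\Gamma(\hat\Gamma)$, notes it is irreducible and dominant over $\PP^1\setminus\{1\}$, and computes $\infty^![\bC_{\Gamma/\hat\Gamma}]$ by commuting the Gysin maps $\infty^!$ (in the $\lambda$-direction) and $v_0^!$ (the $t=0$ specialization in the normal-cone deformation). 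The key identification $[M^\circ_\Gamma(\hat\Gamma)|_{\lambda=\infty}] = i_*[M^\circ_{\Gamma_\infty}(\hat\Gamma_\infty)]$ is the substantive step, and you have not supplied it. At $\lambda=0$ the issue does not arise only because the family $(\Gamma\subset\hat\Gamma)$ is literally a product over $\AAA^1\setminus\{1\}$.

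Your proposed fix for the reducible, non-reduced fiber---pass to \'etale neighborhoods and treat each $\hat\Gamma_{\infty,j_A}$ as locally principal---is both laborious and unnecessary, and it is not clear it would succeed, since the components meet along positive-dimensional loci and are not individually Cartier. The paper avoids this entirely: it applies the Gysin compatibility only to the \emph{full} fiber $\hat\Gamma_\infty$ (which \emph{is} Cartier, as the pullback of $\{\infty\}\in\PP^1$), obtaining $\infty^![\bC_{\Gamma/\hat\Gamma}] = j_*[\bC_{\Gamma_\infty/\hat\Gamma_\infty}]$. The decomposition into $[\bC_{dist}]$ and $m_{j_A}[\bC_{j_A}]$ then follows in one line by applying the specialization homomorphism $A_*(\hat\Gamma_\infty)_\QQ \to A_*(\bC_{\Gamma_\infty/\hat\Gamma_\infty})_\QQ$ to the cycle decomposition \eqref{special fiber 1} of $[\hat\Gamma_\infty]$. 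This is the step you should replace your local analysis with.
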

 
 \begin{proof}
By Theorem 6.2.(a) and Theorem 6.4 in \cite{Fu} (as extended to DM-stacks in \cite{Vistoli}), we have
\begin{align}\label{LHS}   \lambda^! \iota _* 0^! [\bC_ {\Gamma^0 / \Gamma}]  = (\iota _\lambda) _* \lambda^! 0^! [\bC_{\Gamma^0 / \Gamma}  ]   
= (\iota _\lambda)_* 0^! \lambda^! [\bC_{\Gamma^0 / \Gamma} ] .     \end{align}
When $\lambda=0$, $$0^! \lambda^! [\bC_{\Gamma^0 / \Gamma} ] = 0^!_{\Up^0 |_{\Gamma^0_0}}([\bC_{\Gamma^0_0 / \Gamma_0} ] ).$$ 
By Lemma \ref{limiting} below, when $\lambda=\infty$,  
$$0^! \lambda^! [\bC_{\Gamma^0 / \Gamma }  ] 
= 0^!_{\Up^0 |_{\Gamma^0_{\infty, dist}}}([\bC_{dist} ]) +  \sum _{(A,j_A)} m_{j_A}0^!_{\Up^0 |_{\Gamma^0_{\infty,j_A}}} ([\bC_{ j_A}]) .$$
The first term in \eqref{LHS} is independent of $\lambda$. Hence
\begin{align*}
(\iota _0)_* 0^!_{\Up^0 |_{\Gamma^0_0}}([\bC_{\Gamma^0_0 / \Gamma_0} ]) =& (\iota _{\infty})_*( 0^!_{\Up^0 |_{\Gamma^0_{\infty, dist}}}([\bC_{dist} ]))\\ 
&+\sum _{(A,j_A)} m_{j_A} (\iota _{\infty})_* (0^!_{\Up^0 |_{\Gamma^0_{\infty, j_A}}}([\bC_{ j_A}] )) 
\end{align*}
in $A_*(\Gr_Z)_{\QQ}.$
Pushing forward to $Z$ we get \eqref{refinedGraph}.
\end{proof}

To state Lemma \ref{limiting} used in the above proof, we set up some notation first. Recall from \cite[p. 489]{Kresch} that for a local embedding 
$\cX\ra \cY$ of 
 algebraic stacks of finite type over the base field, one has the normal cone $\bC _{\cX/\cY}$ to $\cX$ in $\cY$ and also the deformation of normal cone,
 denoted $M^{\circ}_{\cX}(\cY)$. This is a stack with a morphism to $\PP ^1$ such that the general 
 fiber is isomorphic to $\cY$ and the special fiber at $t=0 \in \PP ^1$ is isomorphic to $\bC _{\cX/\cY}$.
 If $\cX$ is a closed substack in $\cY$, the deformation can be obtained as in \cite[Chapter 5]{Fu}, by constructing 
 $$M_\cX(\cY):=Bl_{\cX\times\{0\}}\cY\times \PP^1 $$
 and setting $$M^{\circ}_{\cX}(\cY):=M_\cX(\cY) \setminus Bl_{\cX\times\{0\}}\cY\times\{0\}.$$

Now form the commuting diagram, whose squares are all cartesian
\[ \xymatrix{ 
\bC_{\Gamma ^0_{\infty}/\Gamma _{\infty}} \ar[d]\ar@{^{(}->}[r]_{j} & \bC_{\Gamma^0 /\Gamma } |_{\lambda = \infty} \ar[d]\ar[r] & \bC_{\Gamma^0 / \Gamma} \ar[d]\ar[r] 
& t=0 \ar[d]^{v_0}  \\
           M^{\circ}_{\kG ^0_{\infty}}(\Gamma_{\infty} ) \ar@{^{(}->}[r]^{i}_{\text{ closed } } &         M^{\circ}_{\kG ^0}(\Gamma )|_{\kl = \infty} \ar[r]\ar[d] 
                                   & M^{\circ}_{\kG ^0}(\Gamma ) \ar[d] \ar[r] & \mathbb{P}^1 \\
              &  \lambda =\infty \ar[r] & \mathbb{P}^1 \setminus \{ 1\} . & } \]
             
\begin{Lemma}\label{limiting} The equalities
 $$\infty ^{!} [\bC_{\Gamma ^0/\Gamma}] = j_*[\bC_{\kG ^0_{\infty}/\Gamma _{\infty}}]
=[\bC_{dist}] + \sum _{A, j_A} m_{j_A}[\bC_{j_A}] $$ hold in $A_*(\bC_{\Gamma^0 /\Gamma } |_{\lambda = \infty} )_{\QQ}$.
\end{Lemma}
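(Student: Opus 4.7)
The plan is to establish the two asserted equalities separately. The first is a general compatibility between specialization along a divisor and the formation of the normal cone; the second is the multiplicity decomposition of the normal cone cycle induced by the cycle decomposition of the special fiber $\hat\Gamma_\infty$ from \eqref{special fiber 1}. Throughout I would work with refined Gysin pull-backs in the sense of Fulton, as extended to DM-stacks by Kresch/Vistoli.

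For the first equality, I would exploit that the deformation space $M^\circ_\Gamma(\hat\Gamma)$ carries both its deformation parameter $t:M^\circ_\Gamma(\hat\Gamma)\to\PP^1$ and the morphism $\lambda$ inherited from $\hat\Gamma$; since $\hat\Gamma$ is irreducible and dominates $\PP^1_\lambda$, the composition is flat over $\PP^1_\lambda\setminus\{1\}$, of constant fiber dimension. Commutativity of refined Gysin homomorphisms (Fulton, Thm.~6.4, as extended to DM-stacks) then yields
\[
\infty^!_\lambda [\bC_{\Gamma/\hat\Gamma}] \;=\; \infty^!_\lambda v_0^{!,t}[M^\circ_\Gamma(\hat\Gamma)] \;=\; v_0^{!,t}\,\infty^!_\lambda [M^\circ_\Gamma(\hat\Gamma)] \;=\; v_0^{!,t}[M^\circ_\Gamma(\hat\Gamma)|_{\lambda=\infty}].
\]
The closed immersion $i:M^\circ_{\Gamma_\infty}(\hat\Gamma_\infty)\hookrightarrow M^\circ_\Gamma(\hat\Gamma)|_{\lambda=\infty}$ in the diagram preceding the lemma is an isomorphism over $t\neq 0$ (both sides restrict to $\hat\Gamma_\infty$ there), so any residual components of the target are supported entirely over $t=0$ and are annihilated by $v_0^{!,t}$ (since the normal bundle of $\{0\}\subset\PP^1_t$ pulls back trivially to them). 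The image of $i$ contributes $j_*[\bC_{\Gamma_\infty/\hat\Gamma_\infty}]$ under $v_0^{!,t}$, by definition of the deformation to the normal cone for the embedding $\Gamma_\infty\subset\hat\Gamma_\infty$.

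For the second equality, I would combine the cycle identity $[\hat\Gamma_\infty]=[\hat\Gamma_{\infty,dist}]+\sum_{(A,j_A)}m_{j_A}[\hat\Gamma_{\infty,j_A}]$ with the general additivity of normal cone cycles with respect to component decompositions of the ambient: for a closed immersion $X\subset Y$ and any decomposition $[Y]=\sum m_\alpha[Y_\alpha]$ into irreducible components with their geometric multiplicities, one has
\[
[\bC_{X/Y}] \;=\; \sum_\alpha m_\alpha [\bC_{X\cap Y_\alpha/Y_\alpha}]
\]
as cycles on $\bC_{X/Y}$, where $X\cap Y_\alpha$ denotes the scheme-theoretic intersection. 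Applied to $Y=\hat\Gamma_\infty$ and $X=\Gamma_\infty=\Gamma\cap\hat\Gamma_\infty$, this is exactly the claimed formula, with $\bC_{dist}=\bC_{\Gamma_{\infty,dist}/\hat\Gamma_{\infty,dist}}$ and $\bC_{j_A}=\bC_{\Gamma_{\infty,j_A}/\hat\Gamma_{\infty,j_A}}$. The main obstacle is verifying that the scheme-theoretic multiplicities $m_{j_A}=\prod_{a\in A}j_a$ truly propagate to the normal cone; I would confirm this étale-locally at a general point of each $\hat\Gamma_{\infty,j_A}$, using the explicit presentation of these components as images of the projective bundles $\PP_{j_A}$ under the generically finite morphism $\alpha_{j_A}^{new}$ of degree $|A|!$ and the extensions \eqref{hat extension P}--\eqref{hat extension E}, which expose the local structure of $\hat\Gamma$ and confirm that the MacPherson graph construction encodes the multiplicities exactly as asserted.
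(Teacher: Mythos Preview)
Your proposal is correct and follows essentially the same route as the paper. For the first equality you use commutativity of the Gysin pullbacks $\infty^!_\lambda$ and $v_0^{!,t}$ on the deformation space $M^\circ_\Gamma(\hat\Gamma)$ and then identify the fiber at $\lambda=\infty$ with $i_*[M^\circ_{\Gamma_\infty}(\hat\Gamma_\infty)]$ up to classes supported in $\{t=0\}$; the paper does exactly the same, phrasing the key step as the cycle identity $[M^\circ_\Gamma(\hat\Gamma)|_\infty]=[\overline{\hat\Gamma_\infty\times(\PP^1_t\setminus\{0\})}]=i_*[M^\circ_{\Gamma_\infty}(\hat\Gamma_\infty)]$ followed by the push-forward compatibility $v_0^! i_*=j_*v_0^!$. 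Your handling of possible residual components via excess intersection with the trivial normal bundle of $\{0\}\subset\PP^1_t$ is a legitimate variant of the paper's closure identification.

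For the second equality, the paper invokes precisely the additivity you state, but packages it as the specialization-to-normal-cone homomorphism $A_*(\hat\Gamma_\infty)_\QQ\to A_*(\bC_{\Gamma_\infty/\hat\Gamma_\infty})_\QQ$. This is a group homomorphism sending $[V]\mapsto[\bC_{V\cap\Gamma_\infty/V}]$ for every subvariety $V\subset\hat\Gamma_\infty$, so applying it to the already-established cycle decomposition \eqref{special fiber 1} yields the result in one line. Your proposed \'etale-local verification of the multiplicities is therefore unnecessary: the ``main obstacle'' you flag is not an obstacle, since the multiplicities $m_{j_A}=\prod_a j_a$ live in the cycle identity $[\hat\Gamma_\infty]=[\hat\Gamma_{\infty,dist}]+\sum m_{j_A}[\hat\Gamma_{\infty,j_A}]$ and are carried along automatically by linearity of specialization.
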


\begin{proof} The equality $\infty ^{!} [\bC_{\kG^0/\Gamma}] = j_*[\bC_{\kG^0_{\infty}/\Gamma_{\infty}}]$ is a consequence of the definition of Gysin maps, their commutativity, and their compatibility with proper push-forward,
as follows:
\begin{align*}  & \infty ^{!} [\bC_{\kG^0 /\Gamma}] =  \infty ^{!} v_0^{!}  [M^{\circ}_{\kG^0 }(\Gamma) ]  =  v_0^{!}\infty ^{!}   [M^{\circ}_{\kG^0}(\Gamma)]  =  
v_0^{!} [M^{\circ}_{\kG^0}(\Gamma) |_{\infty}]  
\\ = &  v_0^{!} [ \overline{\Gamma_{\infty} \times (\mathbb{P}^1 - \{t=0\}) } ]  =
v_0^{!} i_* [ M^{\circ}_{\kG ^0_{\infty}} (\kG _{\infty})] = j_* v_0^{!} [   M^{\circ}_{\kG^0 _{\infty}} (\kG _{\infty})    ]  \\ = & j_*[\bC_{\kG^0 _{\infty}/\Gamma _{\infty}}]  . \end{align*}
Here some explanation is in order.  For the third equality in the above chain, note that $M^{\circ} _{\kG^0}(\kG)$ is irreducible and dominant over $\PP ^1 \setminus \{1\}$.
 The closure is taken in $M^{\circ}_{\kG^0}(\Gamma) |_{\infty}$. The fifth equality follows by the very definition of proper push-forward.
 
 The decomposition  $$j_*[\bC_{\kG ^0_{\infty}/\kG _{\infty}}]
=[\bC_{dist}] + \sum _{A, j_A} m_{j_A}[\bC_{j_A}] $$ is a consequence of the decomposition 
$[\kG _{\infty}] = [\kG_{\infty, dist}] +  \sum _{A, j_A} m_{j_A}[\kG_{\infty, j_A}] $ in $A_*(\kG _{\infty})_{\QQ}$ (Theorem \ref{thm:Gamma des}), via
the specialization to the normal cone homomorphism $ A_*(\kG _{\infty})_{\QQ} \ra A_*(\bC _{\Gamma ^0_{\infty}/\kG _{\infty}})_{\QQ}$.
\end{proof}

We finish this subsection by recording a basic intersection-theoretic Lemma which will be used several times in the sequel.

\begin{Lemma}\label{birational} Let $f:\cY'\lra \cY$ be a proper morphism between finite type 
Deligne-Mumford stacks of the same pure dimension.  
Let $i:\cX\hookrightarrow \cY$ be a closed embedding and form the fiber square
$$\xymatrix{\cX'\ar[d] \ar[r] &\cY'\ar[d]^f\\
\cX \ar[r]^{i} & \cY .}
$$
Let $\tilde{f}:\bC_{\cX'/\cY'}\lra \bC_{\cX/\cY}$ be the induced map between normal cones. 
If $f_*[\cY']=m[\cY]$ for a nonnegative rational number $m$, then $\tilde f_*[\bC_{\cX'/\cY'}]= m [\bC_{\cX/\cY}]$.
\end{Lemma}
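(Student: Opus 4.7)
The plan is to reduce the statement to the standard fact that proper pushforward commutes with specialization to the normal cone.

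First, I would form the deformations to the normal cone $M^\circ := M^\circ_\cX(\cY)$ and $M'^\circ := M^\circ_{\cX'}(\cY')$ over $\PP^1$ (as recalled in the proof of Lemma \ref{limiting}). Each has pure dimension equal to $1+\dim\cY=1+\dim\cY'$, restricts over $\PP^1\setminus\{0\}$ to the trivial products $\cY\times(\PP^1\setminus\{0\})$ and $\cY'\times(\PP^1\setminus\{0\})$, and has special fiber over $0\in\PP^1$ equal to $\bC_{\cX/\cY}$ and $\bC_{\cX'/\cY'}$, respectively.

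Next, since the square is cartesian, $\cX'\times\{0\}$ is the scheme-theoretic preimage of $\cX\times\{0\}$ under $f\times\id:\cY'\times\PP^1\to\cY\times\PP^1$. The universal property of blowups (valid in the DM-stack setting) then produces a canonical morphism
$$Bl_{\cX'\times\{0\}}(\cY'\times\PP^1)\lra Bl_{\cX\times\{0\}}(\cY\times\PP^1),$$
which descends to a proper morphism $F:M'^\circ\to M^\circ$ over $\PP^1$ whose restriction over $\PP^1\setminus\{0\}$ equals $f\times\id$ and whose restriction to the special fiber is the induced map $\tilde{f}:\bC_{\cX'/\cY'}\to\bC_{\cX/\cY}$. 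Because $F$ is proper between equidimensional stacks and $F_*$ agrees with $m\cdot\id$ on the dense open $\PP^1\setminus\{0\}$ (by the hypothesis $f_*[\cY']=m[\cY]$), the two top-dimensional cycles $F_*[M'^\circ]$ and $m[M^\circ]$ coincide on a dense open substack and hence are equal in $A_*(M^\circ)_\QQ$.

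Finally, I would apply the Gysin pullback $0^!:A_*(M^\circ)_\QQ\to A_*(\bC_{\cX/\cY})_\QQ$ for the regular embedding $\{0\}\hookrightarrow\PP^1$ (Fulton \S 6, as extended to DM stacks by Vistoli). Compatibility of $0^!$ with proper pushforward yields
$$\tilde{f}_*\,0^![M'^\circ]=0^!F_*[M'^\circ]=m\cdot 0^![M^\circ],$$
and the specialization identity $0^![M^\circ]=[\bC_{\cX/\cY}]$ (and likewise $0^![M'^\circ]=[\bC_{\cX'/\cY'}]$) gives $\tilde{f}_*[\bC_{\cX'/\cY'}]=m[\bC_{\cX/\cY}]$, as desired.

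The main potential obstacle is the construction of $F$ in the DM-stack setting, and in particular checking that it is proper; once one has the universal property of blowups for DM stacks together with the fact that $(f\times\id)^{-1}(\cX\times\{0\})=\cX'\times\{0\}$, properness of $F$ follows from that of $f\times\id$ and of the blowup maps, and the rest of the argument is bookkeeping with the standard specialization machinery.
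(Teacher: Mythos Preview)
Your proposal is correct and follows essentially the same strategy as the paper: both use the deformation to the normal cone and the compatibility of the Gysin map $0^!$ with proper pushforward. The only difference is that the paper works with the full deformation spaces $M_\cX\cY=Bl_{\cX\times\{0\}}(\cY\times\PP^1)$, decomposes the special fiber as $[\PP(\bC_{\cX/\cY}\oplus\mathbf{1})]+[Bl_\cX\cY]$, and cancels the $Bl_\cX\cY$ contribution at the end, whereas you work directly with the open versions $M^\circ$. Your variant is slightly cleaner, but note that properness of $F:M'^\circ\to M^\circ$ does not follow formally from ``properness of $f\times\id$ and of the blowup maps'' as you wrote; it requires the extra (easy) check that $\phi^{-1}(Bl_\cX\cY)=Bl_{\cX'}\cY'$, so that $F$ is the base change of the proper map $\phi$ between the full blowups along the open immersion $M^\circ\hookrightarrow M_\cX\cY$. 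The paper's choice to work with the compactified spaces sidesteps this verification.
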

\begin{proof} When $\cY,\cX$, and $\cY'$ are schemes, this is \cite[Lemma 3.15]{Vistoli}. 
For the convenience of the reader, we give a short argument.
Consider the deformations to the normal cone
$$M_\cX\cY=Bl_{\cX\times\{0\}}\cY\times\PP^1,\;\;\;\;\; M_{\cX'}\cY'=Bl_{\cX'\times\{0\}}\cY'\times\PP^1.$$ The map 
$\phi:M_{\cX'}\cY'\lra M_\cX\cY$ induced by $f$ is proper and $\phi_*[M_{\cX'}\cY']=m[M_\cX\cY]$.
Let $v_0:\{0\}\hookrightarrow\PP^1$ be
the inclusion. Denoting by ${\bf 1}$ the trivial rank one vector bundle, we have
\begin{equation}\label{deformation}
m [\PP(\bC_{\cX/\cY}\oplus{\bf 1})]+ m[Bl_\cX\cY]=mv_0^![M_\cX\cY]=v_0^!\phi_*[M_{\cX'}\cY']=(\phi|_{t=0})_*v_0^![M_{\cX'}\cY'],
\end{equation}
where we have used the commutativity of Gysin maps with proper push-forward for the last equality. Since 
$$v_0^![M_{\cX'}\cY']=[\PP(\bC_{\cX'/\cY'}\oplus{\bf 1})]+[Bl_{\cX'}\cY']$$ and $(\phi|_{t=0})_*[Bl_{\cX'}\cY']=m[Bl_{\cX}\cY]$, we conclude from \eqref{deformation} that
$$(\phi|_{t=0})_*[\PP(\bC_{\cX'/\cY'}\oplus{\bf 1})]=m[\PP(\bC_{\cX/\cY}\oplus{\bf 1})].$$
The Lemma follows, since $\tilde f$ is the restriction to $\bC_{\cX'/\cY'}$ of $\phi|_{t=0}$.
\end{proof}

\subsection{The correcting classes \texorpdfstring{$\mu_{d_a}^N(z)$}{Lg}}\label{correcting class}
Consider the Segre embedding
\begin{equation}\label{Segre0}
Seg :\PP(V)\times\PP(\CC^N)\lra \PP(V\ot \CC^N).
\end{equation}
Recall the map $h_a^+ : U_{k+A,d'_0}^+\lra \PP(\CC^N)$ given by the twisting line bundle $\sM_+$ and its sections $s_1,\dots, s_N$; see  \eqref{eqn:h_a}.
Viewing
$Q^+_{g, k+A}(X,d_0)$ as a substack of $U_{k+A,d'_0}^+$ 
via the embedding \eqref{embedding diagram} for the bundle $F^+_{d'_0}$, we have the restriction $h_a^+ : Q^+_{g, k+A}(X,d_0)\lra \PP(\CC^N)$;
see \eqref{eqn:d_0}  for notation $d_0=d_0^A$.
The two evaluation maps on $Q^+_{g, k+A}(X,d_0)$ at markings in $A$ are related by $$\hat{ev}_a|_{Q^+_{g, k+A}(X,d_0)}=Seg\circ(ev_a,h_a^+);$$ 
see \S \ref{sec:boundary} for notations $\hat{ev}_a$ and $ev_a$.

In this subsection we prove the following weaker version of the main theorem.

\begin{Thm}\label{Degeneration2} Let $z$ be a formal variable. There exists a Chow cohomology class $\mu_{d_a}^N(z)\in A^*(X\times\PP(\CC^N))_\QQ[z]$, 
dependent on $g$ and $k$ only through the dependence on $N$,
such that after push-forward to $A_*(Q^-_{g,k}(X,d))_\QQ$ by $c|_Z$, the equality of Lemma \ref{degeneration for cones} becomes
\begin{align}\label{weak formula}
&[Q^{-}_{g,k}(X,d)]^{\mathrm{vir}} - c_*[Q^{+}_{g,k}(X,d)]^{\mathrm{vir}} =\\
& \nonumber\sum_A \frac{1}{|A|!}
(b_A)_* (c_A)_* \left(\prod_{a\in A} (ev_a, h_a^+)^*\mu_{d_a}^N(z)|_{z=-\psi_a} \cap [Q^{+}_{g,k+A}(X,d_0^A)]^{\mathrm{vir}}\right ) .
\end{align}
\end{Thm}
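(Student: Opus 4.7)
The plan is to take Lemma \ref{degeneration for cones}, push its equality forward via $c=\hat c|_Z:Z\to Q^-_{g,k}(X,d)$, and identify each term. First I would identify the two terms on the left of \eqref{refinedGraph}. At $\lambda=0$, the isomorphism $v_0:U^+_{d,d'}\stackrel{\cong}{\to}\hat\Gamma_0$ sends the section defining $\Gamma_0$ to $\hat c^*\sigma^-$, so $\Gamma_0=Z=\hat c^{-1}(Q^-_{g,k}(X,d))$ and $\Up|_{\Gamma_0}=\hat c^*E_d^-$. Since $\hat c$ is proper and birational and the square obtained by pulling back along $i:Q^-_{g,k}(X,d)\hookrightarrow U^-_{d,d'}$ is cartesian, Lemma \ref{birational} gives $(\hat c|_Z)_*[\bC_{Z/U^+_{d,d'}}]=[\bC_{Q^-_{g,k}(X,d)/U^-_{d,d'}}]$; combined with compatibility of refined Gysin maps with proper push-forward and Corollary \ref{concrete virtual class} this yields $c_*(\eta|_Z)_*(\iota_0)_*0^!_{\Up|_{\Gamma_0}}([\bC_{\Gamma_0/\hat\Gamma_0}])=[Q^-_{g,k}(X,d)]^{\vir}$. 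At the distinguished component over $\lambda=\infty$ one has $\hat\Gamma_{\infty,dist}\cong U^+_{d,d'}$, $\Up|_{\Gamma_{dist}}=E_d^+$, and the restriction of $\bar\sigma$ is $(\sigma^+,0)$, so $\Gamma_{dist}\cong Q^+_{g,k}(X,d)$ (with the embedding of \eqref{embedding diagram}) and the same argument gives $c_*[Q^+_{g,k}(X,d)]^{\vir}$.

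Next I would unpack each term $(\eta|_Z)_*(\iota_\infty)_*0^!_{\Up|_{\Gamma_{\infty,j_A}}}([\bC_{j_A}])$ on the right of \eqref{refinedGraph}. Using $\alpha_{j_A}:\PP_{j_A}\to\hat\Gamma_{\infty,j_A}$ of generic degree $|A|!$ and $\alpha_{j_A}^*\hat\zeta=\hat\xi_{j_A}$, Lemma \ref{birational} identifies the pushed-down refined intersection with $\frac{1}{|A|!}$ times the analogous intersection on $\PP_{j_A}$, where the ambient bundle $\alpha_{j_A}^*\Up$ is now described by the extensions \eqref{hat extension P}--\eqref{hat extension E} and a subbundle condition coming from the $\tilde{\sV}',\tilde{\sK}'$ factors in $\Gr^{new}$. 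The pullback of the section $\bar\sigma$ to $\PP_{j_A}$ factors (after the extension splittings) into a part with image in $\hat\pr_A^*\hat E^{+,j_A+1}_{tail,d_a}\oplus\hat\mpr_A^*\hat c_A^*\hat E^{-,j_A-1}_{d_0'}$ that only constrains the ``tail" factor, and a part landing in the quotient $\boxplus_a(\cO_{\PP_{j_a}}(-1)\otimes\pi_{\PP}^*\hat\sN_{j_a})$ that involves $(ev_a,h_a^+)$-data. The key observation I would prove is that on the $U^+_{k+A,(d_0,d_0')}$-factor the ``sub" part of the section reduces to $\sigma^+$, so that $\bC_{j_A}$ pulls back to a cone that is (fppf-locally) the fiber product of $\hat\pr_A^*\bC_{Q^+_{g,k+A}(X,d_0^A)/U^+_{k+A}}$ with pure tail and $\PP^1$-bundle data.

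Granted that factorization, the refined Gysin class on $\PP_{j_A}$ computes as $\hat\pr_A^*([Q^+_{g,k+A}(X,d_0^A)]^{\vir})$ capped with a Chow class $\Theta_{j_A}$ that is a product over $a\in A$ of purely local factors on $\PP_{j_a}|_{\hat D_A}$. Each such factor is a polynomial in $\psi_a$, $\psi_a^{tail}$ and in Chern classes of the bundles $S_{j_a}$, $\hat T_{j_a}$ (which by definition are pulled back from $X\times\PP(\CC^N)$ through $(ev_a,h_a^+)\circ\hat{\mpr}_A$ and $\hat{ev}_a\times\mathrm{id}$). Integrating along each $\pi_\PP:\PP_{j_a}\to\hat D_A$ and along the rational-tail factor $Q^+_{0,a}(\PP(V\ot\CC^N),d_a)$ via $\hat\pr_A$, and then summing over all $j_A$ with their multiplicities $m_{j_A}=\prod_a j_a$, produces for each marking $a$ a single Chow class
\[ \mu_{d_a}^N(z)\in A^*(X\times\PP(\CC^N))_\QQ[z] \]
whose coefficient of $z^m$ captures the $\psi_a^m$-contribution; here $z$ is paired with $-\psi_a$ because the $\PP^1$-bundle relations $c_1(\cO_{\PP_{j_a}}(1))+\psi_a^{tail}j_a = -\psi_a j_a+c_1(\cO_{\PP_{j_a}}(1))$ reverse the sign of $\psi_a$ on pushforward. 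Finally pushing along $\hat b_A$ (the contraction of rational tails to base points), composed with $c_A$ and with $c$, converts $\hat\pr_A^*[Q^+_{g,k+A}(X,d_0^A)]^{\vir}$ into $(b_A)_*(c_A)_*[Q^+_{g,k+A}(X,d_0^A)]^{\vir}$, yielding exactly the right-hand side of \eqref{weak formula}.

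The main obstacle I expect is the factorization step in the second paragraph: proving rigorously that the refined cone class $\alpha_{j_A}^*[\bC_{j_A}]$ really splits, at the level of normal cones, into a product of the cone $\bC_{Q^+_{g,k+A}(X,d_0^A)/U^+_{k+A}}$ pulled back along $\hat{\pr}_A$ and a cone supported on $\PP_{j_A}$ coming purely from the tail/$\PP^1$-bundle data. This will require constructing a two-term complex over $\PP_{j_A}$ quasi-isomorphic to the relevant obstruction complex there, verifying compatibility of this complex with the extension structure on $\hat\xi_{j_A}$, and applying the functoriality of Behrend--Fantechi's construction of virtual classes to propagate the splitting through the Gysin pullback. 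The rest of the argument (the $\PP^1$-bundle computation and assembling $\mu^N_{d_a}(z)$) is formal once this factorization is in hand.
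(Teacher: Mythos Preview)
Your treatment of the left-hand side is correct and matches the paper exactly, as does the transfer to $\PP_{j_A}$ via $\alpha_{j_A}$ and Lemma \ref{birational}. The gap is in your ``factorization step.'' The cone $\widetilde{\bC_{j_A}}:=\bC_{Z_{j_A}/\PP_{j_A}}$ is \emph{not} a fiber product of $\pi_\PP^*\mpr_A^*\bC_{Q^+_{g,k+A}(X,d_0)/U^+_{k+A,(d_0,d'_0)}}$ with tail data, even fppf-locally. The section $\alpha_{j_A}^*\bar\sigma$ lives in the quotient bundle $\hat\Up_{j_A}$, which is an extension (your description via $\hat\xi_{j_A}$ has the sub/quotient roles reversed and the sign on $\cO_{\PP_{j_a}}(\pm 1)$ flipped); its ``excess'' component $\bar\sigma_{ex}$ does not vanish on $\PP_{j_A}|_{D_A}$, so the zero locus $Z_{j_A}$ is a proper closed substack of $\PP_{j_A}|_{D_A}$ and the cone picks up contributions from the tail constraints that are genuinely entangled with the body constraints through the node-gluing. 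No amount of Behrend--Fantechi functoriality will produce a product decomposition of a cone that is not a product.

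What the paper does instead is construct an explicit $\AAA^1$-deformation that \emph{unglues} the tail from the body. One embeds $\hat\Up_{j_A}$ into a split bundle $\pi_\PP^*\hat\mpr_A^*\hat E^+_{d'_0}\oplus\hat\Up^0_{j_A,ex}$ via a gluing surjection $\oplus_a(r_a-r_a^{tail})$ onto $\pi_\PP^*(\oplus_a\hat\sN_0)$, then deforms this surjection to $\oplus_a(tr_a-r_a^{tail})$ over $\PP_{j_A}\times\AAA^1$, with the section deformed to $(\pi_\PP^*\hat\mpr_A^*\sigma^+_{d'_0},\,t\,\bar\sigma_{ex})$. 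At $t=1$ one recovers $(\hat\Up_{j_A},\bar\sigma)$; at $t=0$ the section collapses to $(\sigma^+_{d'_0},0)$, whose zero locus is all of $\PP_{j_A}|_{D_A}$ and whose cone \emph{is} $\pi_\PP^*\mpr_A^*\bC_{Q^+_{g,k+A}(X,d_0)/U^+_{k+A,(d_0,d'_0)}}$. A deformation-invariance argument parallel to Lemma \ref{degeneration for cones} (and a parallel deformation of the map to $\hat\Up_{\sK'}$ to obtain the correct sub-bundle $\widetilde\Up$) then gives
\[
(\iota_1)_*\,0^!_{\Up_{j_A}}([\widetilde{\bC_{j_A}}])=\mathbf{e}(\Up_{j_A,ex})\cap\pi_\PP^*\mpr_A^*[Q^+_{g,k+A}(X,d_0)]^{\vir},
\]
after which your Step-4 integration over the $\PP^1$-bundles and tails goes through as you outlined. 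The deformation is the missing idea; without it there is no way to separate the virtual class on the body from the excess Euler class.
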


\begin{proof}
We analyze the push-forward to $A_*(Q^-_{g,k}(X,d))_\QQ$ of each term in \eqref{refinedGraph} by $c|_Z$ which will be also denoted 
by $c$ for easy notation. We have also induced maps $$\bC_{\Gamma ^0_0/\Gamma_0}  \to c^*\bC_{Q^-_{g,k}(X,d)/U^-_{d'}} \to  \bC_{Q^-_{g,k}(X,d)/U^-_{d'}} , $$
whose composition will be denoted by $c_{\bf c}$.

The terms on the left-hand side are very easy.
First, by the identifications $(\Gamma^0_0\subset\Gamma_0)=(Z \subset U^+_{d'})$ and 
$\Up^0 |_{\Gamma^0_0}=c^*E^-_d$ we have
\begin{equation*} 
\begin{split}
c_* (\eta |_{Z}) _* (\iota _{0})_*( 0^!_{\Up^0 |_{\Gamma^0_0}}([ \bC_{\Gamma ^0_0/\Gamma_0} ]))
& =0^!_{E^-_d}(c_{{\bf c} *} [ \bC_{\Gamma ^0_0/\Gamma_0} ])\\
&=0^!_{E^-_d}([\bC_{Q^-_{g,k}(X,d)/U^-_{d'}}])\\
& = [Q^-_{g,k}(X,d)]^{\mathrm{vir}},
\end{split}
\end{equation*}
where we have used standard properties of the Gysin map for the first equality, Lemma \ref{birational} for the second equality, and Corollary \ref{concrete virtual class} for the third equality.

Second, 
\begin{equation*} 
c_*(\eta |_{Z}) _* (\iota _\infty)_* 0^!_{\Up^0 |_{\Gamma_{dist}}}([\bC_{dist}  ])=c_*([Q^+_{g,k}(X,d)]^{\mathrm{vir}}),
\end{equation*}
again by the identifications $(\Gamma^0_{\infty, dist}\subset \Gamma_{\infty, dist})=(Q^+_{g,k}(X,d) \subset U^+_{d'})$ and 
$\Up^0 |_{\Gamma_{\infty, dist}}=E^+_d$, together with Corollary \ref{concrete virtual class}.

The analysis of the right-hand side of  \eqref{refinedGraph} is significantly more subtle, so we divide it into several steps for clarity.

{\it Step 1: Transferring the computation to $\PP_{j_A}$.}
The Segre embedding \eqref{Segre0}, together with the inclusion $i:X\hookrightarrow \PP(V)$, induces the embedding 
\begin{align}\label{Segre}
i_{Seg}& :X\times \PP(\CC^N)\hookrightarrow \PP(V\ot \CC^N)\times\PP(\CC^N),  \\
\nonumber & (x,y)\mapsto( Seg(i(x),y),y).
\end{align}
We identify $X\times \PP(\CC^N)$ with its image under $i_{Seg}$.
Set
\begin{align*}\label{eqn:Qtaila}
Q_{tail,a}^+:=(\hat{ev}_a\times\id_{\PP(\CC^N)})^{-1}(X\times\PP(\CC^N)),
\end{align*}
a closed substack in $Q_{0,a}^+(\PP(V\ot\CC^N),d_a)\times\PP(\CC^N)$, and 
$$Q_{tail,A}^+:=\prod_{a\in A}Q_{tail,a}^+,$$
so that we have the cartesian square
\begin{equation*}
\xymatrix{Q_{tail,A}^+ \ar[r] \ar[d] &
\prod_{a\in A}(Q_{0,a}^+(\PP(V\ot\CC^N),d_a)\times\PP(\CC^N))\ar[d]^{\prod_a(\hat{ev}_a\times \id_{\PP(\CC^N)})}\\
(X\times\PP(\CC^N))^A \ar[r]^-{\prod_a i_{Seg}} & (\PP(V\ot \CC^N)\times\PP(\CC^N))^A.
}
\end{equation*}
Further, define the closed substack $D_{X, A} \subset D_A$ by the cartesian square
\begin{equation}\label{cartesian diag}
\xymatrixcolsep{7pc}
\xymatrix{D_{X, A}  \ar[r]^-{\mpr_{A}}\ar[d]_-{\pr_{A}} & Q^+_{g, k+A}(X,d_0)\ar[d]^-{((ev_a,h_a^+))_{a\in A}}\\
Q_{tail,A}^+ \ar[r]_-{\prod_a(\hat{ev}_a\times \id_{\PP(\CC^N)})} & (X\times\PP(\CC^N))^A.
}
\end{equation}
where by abusing notation $\mpr_{A}$, $\pr _A$ denote $\mpr_{A} |_{D_{X, A}}$, $\pr |_{D_{X, A}}$ respectively.
Note that $\prod_a(\hat{ev}_a\times \id_{\PP(\CC^N)})$ is a flat map (in fact, smooth) and therefore
so is $\mpr_A$.

Now fix the pair $(A,j_A)$ and
define $Z_{j_A}\subset \PP _{j_A}$ by the cartesian square
\begin{equation} \label{diag:alpha_j_a} \xymatrix{ Z_{j_A}  \ar[d]_{\alpha_{j_A}} \ar[r]      & \PP _{j_A} \ar[d]^{\alpha_{j_A}} \\
 \Gamma^0_{\infty, j_A} \ar[r] & {\Gamma}_{\infty, j_A} . }  
 \end{equation}
$Z_{j_A}$ is the zero locus of the section $\alpha_{j_A}^*\overline{\sigma}\in H^0(\PP_{j_A}, \alpha_{j_A}^* \Up)$. The restriction to $Z_{j_A}$
of the projection $\pi_{\PP}: \PP _{j_A}\lra D_A$ factors through $D_{X, A}$.

We assemble everything in the commuting diagram
\begin{equation}\label{Main Diagram}
\xymatrixcolsep{4pc}
\xymatrix{ \Gamma ^0_{\infty,j_A}\ar[r]^-{(\eta|_Z)\circ\iota_{\infty}} & Z  \ar[rr]^{c}  &  & Q^{-}_{g, k}(X, d)    \\
Z_{j_A}\ar[r]^-{\pi_{\PP}}\ar[u]^-{\alpha_{j_A}} & \ar[d]_{\pr _{A}} D_{X, A} \ar[r]^-{\mpr  _{A} } \ar[u]^{\nu_A}  
 &   \ar[d]^-{(({ev}_a,h_a^+))_{a\in A}} Q^+_{g, k+A}(X,d_0)  \ar[r]^{c_A\ \ \  }  
& Q^{-}_{g, k + A}(X, d_0 )  \ar[u]_{b_A}     \\
 & Q_{tail, A}^+
 \ar[r]_-{\prod  (\hat{ev}_a\times\id)}  & (X\times\PP(\CC^N)) ^A   &   } 
\end{equation}
 with abusing notation again $c=c|_Z$, $c_A=c_A|_{Q^+_{g, k+A}(X,d_0)}$ (this notation is justified by Remark \ref{compatible contractions} in \S \ref{sec:bar sigma})
 and $\nu _A = \nu _A |_{D_{X, A}}$ etc.
 
Let  \begin{align}\label{eqn:tildeC_j_A} \widetilde{\bC_{ j_A}}:=\bC_{Z_{j_A}/\PP_{j_A}} . \end{align}
By Lemma \ref{birational} applied to \eqref{diag:alpha_j_a} 
and the commutativity of the Gysin map with push-forward,
$$
0^!_{\Up^0 |_{\Gamma^0_{\infty, j_A}}}([\bC_{ j_A}] )= 
\frac{1}{|A|!}(\alpha_{j_A})_*0^!_{\alpha_{j_A}^*(\Up^0 |_{\Gamma ^0_{\infty,j_A}})}([\widetilde{\bC_{j_A}}]),
$$ where $\bC_{ j_A} := \bC_{\Gamma ^0_{\infty, j_A} /\Gamma _{\infty, j_A}}$ as defined in Lemma \ref{degeneration for cones}.
From the diagram \eqref{Main Diagram},
\begin{align*}
& \frac{1}{|A|!} c_* (\eta |_{Z}) _* (\iota _{\infty})_*(\alpha_{j_A})_*(0^!_{\alpha_{j_A}^*(\Up^0 |_{\Gamma ^0_{\infty,j_A}})}([\widetilde{\bC_{j_A}}]))=\\
& \frac{1}{|A|!} (b_A)_*(c_A)_*(\mpr_A)_*(\pi_\PP)_*(0^!_{\alpha_{j_A}^*(\Up^0 |_{\Gamma ^0_{\infty,j_A}})}([\widetilde{\bC_{j_A}}])).
\end{align*}

Letting $\Upsilon ^0_{j_A}$ denote $\alpha_{j_A}^*\Up^0 |_{\Gamma ^0_{\infty,j_A}} $, it remains to
show that 
\begin{equation}\label{main correction}
\sum_{j_A}m_{j_A}(\mpr_A)_*(\pi_\PP)_*(0^!_{\Up ^0_{j_A}}([\widetilde{\bC_{j_A}}]))
\end{equation}
has the form 
$$\left(\prod_{a\in A} (ev_a, h_a^+)^*\mu_{d_a}(z)|_{z=-\psi_a}\right) \cap [Q^{+}_{g,k+A}(X,d_0^A)]^{\mathrm{vir}},$$
as claimed in Theorem \ref{Degeneration2}.

\newcommand{\FF}{G}

{\it Step 2: Description of $\Up_{j_A}$.} We start by describing first 
\begin{align}\label{eqn:def Up_j_A} \Up_{j_A}:=\alpha_{j_A}^*\Upsilon |_{\Gamma_{\infty, j_A}} \end{align} on $\PP_{ j_A}$. 
Define vector bundles $\FF_{d'}^{+,j_A}$ and $\FF_{d'_0}^{-,j_A}$ on $D_A$
via exact sequences
\begin{align}\label{F tail}
0\ra \pr_{A} ^* F_{tail, d_a}^{+,j_A}\ra \nu_A^* F^+_{d'}\ra \FF_{d'}^{+,j_A}\ra 0,
\end{align}

\begin{align*} 
0\ra {\mpr}_A^*  c_A^* F_{d'_0}^{-,j_A}\ra \nu_A^* c^* F^-_{d'} \ra \FF_{d'_0}^{-,j_A}\ra 0.
\end{align*}
By \eqref{hat extension E}, we have an extension
\begin{align}\label{univ quotient}
0\ra \boxplus_{a\in A}(\cO_{\PP_{j_a}}(1)\ot \pi_{\PP}^* {\tt F}^{j_a}) \ra \Upsilon_{j_A}     
 \ra \pi_{\PP}^*(\FF_{d'}^{+,j_A}\oplus \FF_{d'_0}^{-,j_A})\ra 0.
\end{align}
Further, if we let
\begin{align*}
\FF^{+,j_A}_{tail, d_a}:= 
(\boxplus_{a\in A} {\pr}_a^*F^{+,1}_{tail,d_a})/ {\pr}_A^*F^{+,j_A}_{tail, d_a},
\end{align*}
then from \eqref{res} and \eqref{F tail} it follows that $\FF_{d'}^{+,j_A}$ fits into an extension
\begin{align}\label{F tail extension}
0\ra  \FF^{+,j_A}_{tail, d_a} \ra \FF_{d'}^{+,j_A} \ra {\mpr}_A^* F^+_{d'_0}\ra 0.
\end{align}

Note that we may write alternatively
\begin{align*} 
\FF_{d'_0}^{-,j_A}=& {\mpr}_A^* c_A^*(\oplus_{a\in A: j_a\leq d_a}\pi_*(\sP^{-}_{d'_0}\ot \cO_{(d_a-j_a)p_a}(d_ap_a))) \oplus \\
&\nonumber  {\mpr}_A^* c_A^*(\oplus_{a\in A} \pi_*(\oplus _{i : j_a \le l_i d_a  }  \sR^{-}_{i, d'_0}          
 \ot \cO_{(l_id_a-j_a)p_a}(l_id_ap_a))).
\end{align*}
and
\begin{equation*}
\FF^{+,j_A}_{tail, d_a}=
\left\{\begin{array}{ll}  {\pr}_A^*(\boxplus_{a\in A}\pi_*( (\sP^{+}_{d_a}\oplus \oplus_i       \sR^{+}_{i, d_a}         )         
\ot \cO_{(j_a-1)p_a^{tail}}(-p_a^{tail}))),
& \text{if } j_a \le d_a, \\
 {\pr}_A^*(\boxplus_{a\in A}\pi_*( \oplus_{i:j_a\leq l_id_a}  \sR^{+}_{i, d_a}                  
 \ot \cO_{(j_a-1)p_a^{tail}}(-p_a^{tail}))) , & \text{if } j_a > d_a, \end{array}\right.
\end{equation*}
from which it follows that in the $K$-group of vector bundles on $D_A$ 
\begin{align*}
\FF_{d'_0}^{-,j_A}\sim & 
\left(\boxplus_{a\in A} 
\oplus _{m=j_a+1}^{d_a}( {\mpr}_a^* c_a^*\cO(-m\psi_a)
\ot {\tt P}^{d_a-m})\right)\oplus \\
\nonumber &\left( \boxplus_{a\in A} \oplus_{i=1}^r \oplus _{m=j_a+1}^{l_id_a}
( {\mpr}_a^*  c_a^*\cO(-m\psi_a)\ot {\tt R}_{i}^{l_id_a-m})\right),
\end{align*}
and
\begin{align*}
 \FF^{+,j_A}_{tail, d_a}\sim \boxplus_{a\in A}\left ( \oplus_{m=1}^{j_a-1} ( \pr_a^*\cO(m\psi_a^{tail})\ot {\tt F}^{m})\right)\footnotemark
\end{align*}\footnotetext{The notation ${\tt F}^{m}$ is a little ambiguous, since the dependence on the marking $a$ is not apparent anymore.
The same will happen later, e.g., with the bundles ${\tt F}^{0}$ in \eqref{unglued} below. Hopefully this will not cause any confusion. }
where ${\tt P}^{d_a-m} := \mpr_a^*c_a^* {\tt P}^{d_a-m}$,   
${\tt R}_{i}^{l_id_a-m} := \mpr_a^* c_a^* {\tt R}_{i}^{-, l_id_a-m}$ 
(see \eqref{eqn:def tP-}, \eqref{eqn:def tR-i}, \eqref{eqn:def tF} for the definition of ${\tt P}^{-, d_a-m}$, ${\tt R}_{i}^{-, l_id_a-m}$, ${\tt F}^m$
respectively).

To summarize, the outer terms of the exact sequences \eqref{univ quotient} and \eqref{F tail extension} give four pieces that combine to make $\Upsilon_{j_A}$. 

We now move to the description of the subbundle $\Up ^0_{j_A}  \subset \Up_{j_A}|_{Z_{j_A}}$
(see \eqref{main correction} for the notation $\Up ^0_{j_A}$). For
each $1\leq i\leq r$ and $0\leq j_a$, introduce the bundles
\begin{equation*}
\tR_{i, small}^{j_a} :=\left\{\begin{array}{rr} {\pr}_{a} ^* (\hat{ev}_a\times\id_{\PP(\CC^N)})^*
\left(  \cO_{\PP(V\ot\CC^N)}(l_i)\boxtimes \cO_{\PP(\CC^N)}(-l_i)\right ),
& \text{if } j_a \le l_id_a, \\
0, & \text{if } j_a > l_i d_a , \end{array}\right. 
\end{equation*}
on $ D_{a}$. 
We use the same notation for the restrictions of $\tR_{i, small}^{j_a}$ to the substacks $D_A$ and 
$D_{X, A}$ of $D_{ a }$. Further, we set
$$\tR_{small}^{j_a} :=\oplus_{i=1}^r \tR_{i, small}^{j_a} .$$
Note that, alternatively, we may write on $D_{X, A}$
\begin{align*}
\tR_{small}^{j_a} & ={\pr}_{A} ^*(\pi_*(\oplus_i(\mathscr{L}_{+, d_a})^{l_i} \ot  \cO_{p_a^{tail}}))\\
& =\mpr_A ^*(\pi_*(\oplus_i(\mathscr{L}_{+, d_0})^{l_i} \ot  \cO_{p_a})),
\end{align*}
for $j_a\leq l_id_a$. Finally, put
$$
{\tF}^{j_a}_{small} :=\tP^{j_a}\oplus \tR^{j_a}_{small}.
$$

The surjection
$\Up_{j_A}\twoheadrightarrow \pi_{\PP}^* \mpr_A^* F^+_{d'_0}$ on $\PP_{j_A}$ (coming from \eqref{univ quotient} and \eqref{F tail extension}) induces a surjection $\Up ^0_{j_A}\twoheadrightarrow \pi_{\PP}^*\mpr_A^*E^+_{d_0}$ on $Z_{j_A}$. Define the {\it excess bundles} 
$\Up_{j_A,ex}$ and $\Up^0_{j_A,ex}$ as the corresponding kernels:
\begin{align*} 
0\lra \Up_{j_A,ex} \lra \Up_{j_A} \lra \pi_{\PP}^* \mpr_A^*F^+_{d'_0}\lra 0,
\end{align*}
\begin{align} \label{excess}
0\lra \Up^0_{j_A,ex} \lra \Up^0_{j_A} \lra \pi_{\PP}^*\mpr_A^* E^+_{d_0}\lra 0 .
\end{align}

To complete the description of $\Up^0_{j_A}$, we note that the excess bundle in turn fits into an extension
\begin{align}\label{excess 2}
0\ra \boxplus_{a\in A}(\cO_{\PP_{j_a}}(1)|_{Z_{j_A}}\ot \pi_{\PP}^* {\tF}^{j_a}_{small} ) \ra  \Up^0_{j_A,ex} \ra \pi_{\PP}^*(\FF^{+,j_A}_{tail, d_a, small}
\oplus \FF_{d_0, small}^{-,j_A} )\ra 0,
\end{align}
with
\begin{align*}
\FF^{+,j_A}_{tail, d_a, small}\sim \oplus_{a\in A}\left ( \oplus_{m=1}^{j_a-1} (\pr_A^*\cO(m\psi_a^{tail})\ot \tF^{m}_{small} )\right),
\end{align*}
\begin{align*}
\FF_{d_0, small}^{-,j_A}\sim & 
\left(\oplus_{a\in A} 
\oplus _{m=j_a+1}^{d_a} ({\mpr}_A^* c_A^*\cO(-m\psi_a)
\ot \tP^{d_a-m})\right)\oplus \\
\nonumber &\left( \oplus_{a\in A}  \oplus_{i=1}^r \oplus _{m=j_a+1}^{l_id_a}
({\mpr}_A^* c_A^*\cO(-m\psi_a)\ot \tR_{i, small}^{l_id_a-m})\right) 
\end{align*}
in the $K$-group of $D_A$.
For later use, we note that from the above $K$-group expressions it follows that the Euler classes of these bundles have the form
\begin{align}\label{F plus}
\mathbf{e}( \FF^{+,j_A}_{tail, d_a, small})=\pr_A^*\prod_{a\in A}  (\hat{ev}_a\times\id_{\PP(\CC^N)})^*f^{+,j_a}_{d_a}(z)|_{z=\psi_a^{tail}} ,
\end{align}
\begin{align}\label{F minus}
\mathbf{e}( \FF^{-,j_A}_{d_0, small})=\mpr_A^*\prod_{a\in A}  (ev_a,h_a^+)^*f^{-,j_a}_{d_a}(z)|_{z=-\psi_a} ,
\end{align}
where the Chow cohomology classes 
$$f^{+,j_a}_{d_a}(z), f^{-,j_a}_{d_a}(z)\in A^*(X\times\PP(\CC^N))_\QQ [z]=
(A^*(X)_\QQ\ot A^*(\PP(\CC^N))_\QQ)[z]$$
are polynomials in $z$ with coefficients which are {\it universal} expressions in Chern classes of various tautological bundles $\cO_X(l)$ on $X$, and $\cO_{ \PP(\CC^N ) }(m)$ 
and the tautological quotient bundle $Q$ on $\PP(\CC^N)$.

In the formula \eqref{F minus} we have used that the $\psi$-classes at markings in $A$ on $Q^-_{g,k+A}(X,d_0)$ and
$Q^+_{g,k+A}(X,d_0)$ pull-back under $c_A$, that is, $c_A^*\psi_a=\psi_a$.

{\it Step 3: Deformation.} The idea for computing \eqref{main correction} is to
deform the bundle $\Upsilon^0_{j_A}$, together with its closed 
subcone $\widetilde{\bC_{j_A}}$ (see \eqref{eqn:tildeC_j_A} for the notation $\widetilde{\bC_{j_A}}$), to 
the bundle $\Up^0_{j_A,ex}\oplus \pi_{\PP}^*\mpr_A^*E^+_{d_0}$
with the closed cone 
$\pi_{\PP}^*\mpr_A^*\bC_{Q^+_{g, k + A}(X, d_0)/U^+_{k+A, (d_0,d'_0)}}$ (see \eqref{excess} for the notation $\Up^0_{j_A,ex}$).

To begin with, consider on $ D_A$ the vector bundle homomorphisms
\begin{align*}\xymatrixcolsep{4pc}\xymatrix{
\pr_A^*(\oplus_{a\in A}F^+_{tail, d_a})\ar[r]^-{\oplus_a r_a^{tail}} &
\oplus_{a\in A}\tF ^0,}
\end{align*}
\begin{align*}\xymatrixcolsep{4pc}\xymatrix{
\mpr_A^*F^+_{d'_0}\ar[r]^-{\oplus_a r_a} &
\oplus_{a\in A}\tF ^0,}
\end{align*}
where $r_a^{tail}$ and $r_a$ are given by ``restricting sections at the marking $a$". The resulting surjective gluing map
\begin{align*}\xymatrixcolsep{4pc}\xymatrix{
\mpr_A^*F^+_{d'_0}\oplus \pr_A^*(\oplus_{a\in A}F^+_{tail, d_a})\ar[r]^-{\oplus_a (r_a-r_a^{tail})} & \oplus_{a\in A}\tF ^0\ar[r] & 0
}
\end{align*}
has kernel $\nu_A^*F^+_{d'}$.

Via its embedding in $\pi_\PP ^* (\nu_A^*F^+_{d'}\oplus \mpr_A^*c_A^* F^-_{d'})$,
we may view $\alpha _{j_A}^* (\zeta_{\sP\oplus \sR} |_{\Gamma_{\infty, j_A}})$ as a subbundle 
$$  \alpha _{j_A}^* (\zeta_{\sP\oplus \sR} |_{\Gamma_{\infty, j_A}})    \subset \pi_\PP ^* (
\mpr_A^*F^+_{d'_0}\oplus \pr_A^*(\oplus_{a\in A}F^+_{tail, d_a})
\oplus \mpr_A^*c_A^* F^-_{d'}).
$$
The quotient is an ``unglued" version of $\Up_{j_A}$. Precisely, it splits as
$\pi_\PP ^* \mpr_A^*(F^+_{d'_0})\oplus \Up_{j_A, ex, \hat{0}}$, and there are 
exact sequences 
\begin{align*}
\xymatrixcolsep{2pc}\xymatrix{
0\ar[r] & \Up_{j_A, ex}\ar[r] &  \Up_{j_A, ex, \hat{0}} \ar[r]^-{\oplus_a r_a^{tail}} & \pi_{\PP}^*(\oplus_a\tF ^0) \ar[r] &0
}
\end{align*}
and
\begin{align}\label{unglued}
\xymatrixcolsep{3pc}\xymatrix{
0\ar[r] & \Up_{j_A}\ar[r] & \pi_\PP ^* \mpr_A^*F^+_{d'_0}\oplus \Up_{j_A, ex, \hat{0}}
\ar[r]^-{\oplus_a (r_a-r_a^{tail})} & \pi_{\PP}^*(\oplus_a\tF ^0) \ar[r] &0
}
\end{align}
on $\PP_{j_A}\stackrel{\pi_{\PP}}{\lra} D_A$.
Composing the section $\overline{\sigma} : \cO_{\PP_{j_A}}\lra \Up_{j_A}$ with the monomorphism in
\eqref{unglued} gives the section
\begin{align*}
( \pi_\PP ^* \mpr_A^*\sigma^+_{d'_0},\overline{\sigma}_{ex}):  \cO_{\PP_{j_A}}\lra 
 \pi_\PP ^* \mpr_A^*F^+_{d'_0}\oplus \Up_{j_A, ex, \hat{0}}.
\end{align*}

The base of our deformation will be $\AAA^1$ with coordinate $t$. 
Denote $\varrho:\PP_{j_A}\times\AAA^1\lra \PP_{j_A}$ the projection. Define on $\PP_{j_A}\times\AAA^1$ the vector bundle $\ker$ via the exact sequence
\begin{align*}\xymatrixcolsep{3pc}\xymatrix{
0\ar[r] & \ker \ar[r] & \varrho^*(\pi_\PP ^* \mpr_A^*F^+_{d'_0}\oplus \Up_{j_A, ex, \hat{0}})
\ar[r]^-{\oplus_a (tr_a-r_a^{tail})} & \varrho^*\pi_{\PP}^*(\oplus_a\tF ^0) \ar[r] &0
}
\end{align*}
deforming \eqref{unglued}.
The section
$$\widetilde{\sigma}:=(\varrho^* \pi_\PP ^* \mpr_A^*\sigma^+_{d'_0},t\varrho^*\overline{\sigma}_{ex})$$
of $ \varrho^*(\pi_\PP ^* \mpr_A^*F^+_{d'_0}\oplus \Up_{j_A, ex, \hat{0}})$
factors through $\ker$, so we will view it from now on as a section 
of $\ker$. We have the identifications
$$(\ker|_{t=1},\widetilde{\sigma}|_{t=1})=(\Up_{j_A},\overline{\sigma})$$
and
$$(\ker|_{t=0},\widetilde{\sigma}|_{t=0})=( \pi_\PP ^* \mpr_A^*F^+_{d'_0}\oplus \Up_{j_A, ex}, ( \pi_\PP ^* \mpr_A^*\sigma^+_{d'_0},0)).$$
Let
$$\widetilde{Z}:=\widetilde{\sigma}^{-1}(0)\subset \PP_{j_A}\times\AAA^1$$
be the zero locus and observe that we have in fact
$$\widetilde{Z}\subset \PP_{j_A}|_{D_{X, A}}\times\AAA^1,$$
where $\PP_{j_A}|_{D_A}$ is the fibered product
$$
\xymatrix{ 
 \PP_{j_A}|_{D_{X, A}} \ar[r]^{\ \ \pi_{\PP}}  \ar@{^{(}->}[d]   & D_{X, A} \ar@{^{(}->}[d]    \ar[r]^{\mathrm{Pr}_A \ \ \ \ \ \ } &  Q^+_{g, k+A} (X, d_0) \ar[d]  \\
\PP_{j_A} \ar[r]_{\pi_{\PP}} &  D_{A}       \ar[r]_{{\mathrm{Pr}}_A \ \ \ }    & U^+_{k+A, d_0'}    .}
$$

The fibers of the $\AAA^1$-family $\widetilde{Z}$ at $t=1$ and at $t=0$ are 
$$\widetilde{Z}|_{t=1}=Z_{j_A}, \;\;\;\;\; \widetilde{Z}|_{t=0}=\PP_{j_A}|_{D_{X, A}}.$$
Notice that the normal cones satisfy
\begin{align}\label{cone at 0}
[\bC_{\widetilde{Z}/(\PP_{j_A}\times\AAA^1)}|_{t=0}]=[\bC_{(\PP_{j_A}|_{D_{X, A}})/\PP_{j_A}}]=
 \pi_\PP ^* \mpr_A^*[\bC_{Q^+_{g, k + A}(X, d_0)/U^+_{k+A, d'_0}}],
 \end{align}
 and
 \begin{align}\label{cone at 1}
 [\bC_{\widetilde{Z}/(\PP_{j_A}\times\AAA^1)}|_{t=1}]=[\widetilde{\bC_{j_A}}],
 \end{align}
 as desired.

The ``correct" obstruction bundle $\Up^0_{j_A}$ also deforms. Namely, if we repeat the construction
in this step, but with the bundles $\sP^{\pm}\oplus \sR^{\pm}$, $F^{\pm}_{d'}$ replaced by $\sQ^{\pm}$, $Q^{\pm}_{d'}:=\pi _* \sQ ^{\pm}_{ d'}$ respectively, 
we obtain an unglued
version of $\Up_{\sQ, j_A}:=\alpha_{j_A}^*\Up_{\sQ}|_{\Gamma_{\infty, j_A}}$ given as the extension
\begin{align*}
\xymatrixcolsep{3pc}\xymatrix{
0\ar[r] & \Up_{\sQ ,j_A}\ar[r] & \pi_\PP ^* \mpr_A^* Q^+_{d'_0}\oplus \Up_{\sQ,j_A, ex, \hat{0}}
\ar[r]^-{\oplus_a (r_a-r_a^{tail})} & \pi_{\PP}^*(\oplus_a \tF_{\sQ}^{0}) \ar[r] &0,
}
\end{align*}
and a vector bundle $ {\ker}_{\sQ}$ on $\PP_{j_A}\times\AAA^1$ defined via the deformation
\begin{align*}
\xymatrixcolsep{3pc}\xymatrix{
0\ar[r] & {\ker}_{\sQ}\ar[r] & \varrho^*(\pi_\PP ^* \mpr_A^* Q^+_{d'_0}\oplus \Up_{\sQ,j_A, ex, \hat{0}})
\ar[r]^-{\oplus_a (tr_a-r_a^{tail})} & \varrho^*\pi_{\PP}^*(\oplus_a \tF_{\sQ}^{0}) \ar[r] &0.
}
\end{align*}
Here $\tF_{\sQ}^{0}$ ``at the marking $a$" is the cokernel of $0\ra\tF ^0_{small}\ra\tF ^0$; alternatively,
$$
\tF_{\sQ}^{0} =\pr_{A} ^*(\pi_*(\mathscr{Q}^{+}_{d'_a} \ot  \cO_{p_a^{tail}}))
 =\mpr_A ^*(\pi_*(\mathscr{Q}^{+}_{d'_0}\ot  \cO_{p_a})).
$$
{\it After restricting to $\widetilde{Z}$}, there is a surjection 
$$\varrho^*(\pi_\PP ^* \mpr_A^*F^+_{d'_0}\oplus \Up_{j_A, ex, \hat{0}})\lra
\varrho^*(\pi_\PP ^* \mpr_A^* Q^+_{d'_0}\oplus \Up_{\sQ ,j_A, ex, \hat{0}})\lra0,$$
(just as in \S \ref{def of obs}), making the diagram
\begin{align*}
\xymatrixcolsep{4pc}\xymatrix{
\varrho^*(\pi_\PP ^* \mpr_A^*F^+_{d'_0}\oplus \Up_{j_A, ex, \hat{0}})\ar[d]
\ar[r]^-{\oplus_a (tr_a-r_a^{tail})} & \varrho^*\pi_{\PP}^*(\oplus_a\tF ^0) \ar[r]\ar[d] &0\\
\varrho^*(\pi_\PP ^* \mpr_A^* Q^+_{d'_0}\oplus \Up_{\sQ,j_A, ex, \hat{0}})\ar[d]
\ar[r]^-{\oplus_a (tr_a-r_a^{tail})} & \varrho^*\pi_{\PP}^*(\oplus_a \tF_{\sQ}^{0})\ar[d] \ar[r] &0\\
0 & 0 &
}
\end{align*}
commutative. We conclude that there is an induced map of vector bundles
$$ {\ker}\lra {\ker}_{\sQ},$$
which is easily seen to be surjective at all closed points, and hence surjective. Now define the correct
obstruction bundle $\widetilde{\Up}$ on $\widetilde{Z}$ as the kernel:
$$0\lra \widetilde{\Up} \lra  {\ker}\lra {\ker}_{\sQ} \lra 0.$$
At $t=1$ we have
\begin{align}\label{bundle at 1}
\widetilde{\Up}|_{t=1}=\Up ^0_{j_A},
\end{align}
while at $t=0$
\begin{align}\label{bundle at 0}
\widetilde{\Up}|_{t=0}= \pi_{\PP}^*\mpr_A^*E^+_{d_0}\oplus \Up^0_{j_A,ex}.
\end{align}
Here $\Up^0_{j_A,ex}$ on $\PP_{j_A}|_{D_{X, A}}$ is given by the same extension as in \eqref{excess 2}:
\begin{align}\label{excess 3}
0\ra \boxplus_{a\in A}(\cO_{\PP_{j_a}}(1)\ot \pi_{\PP}^* {\tF}^{j_a}_{small}) \ra  \Up^0_{j_A,ex} \ra \pi_{\PP}^*(\FF^{+,j_A}_{tail, d_a, small}\oplus \FF_{d_0, small}^{-,j_A} )\ra 0.
\end{align}

By a calculation similar to the one used to prove Lemma \ref{subcone}, one checks that 
the normal cone $\bC_{\widetilde{Z}/ (\PP_{j_A}\times\AAA^1)}$ is a subcone of $\widetilde{\Up}$.

Let $\iota:\widetilde{Z}\hookrightarrow \PP_{j_A}|_{D_{X, A}}\times\AAA^1 $ denote the inclusion and
consider the diagram
\begin{align*}
\xymatrix{        t \ar[d]_{t}     &  \ar[l]  \PP_{j_A}|_{D_{X, A}}  \ar[d]  & \ar[l]_{\ \iota _t}   \widetilde{Z}|_t \ar[r]\ar[d] & 
\bC_{\widetilde{Z} / (\PP_{j_A}\times\AAA^1)} |_t \ar[d]         \ar[r]  & t   \ar[d]^{t}  \\
     \AAA^1 &  \ar[l]   \PP_{j_A}|_{D_{X, A}}\times\AAA^1     & \ar[l]^-{\iota} \widetilde{Z}  \ar[r]  \ar[d] & 
     \bC_{\widetilde{Z} / (\PP_{j_A}\times\AAA^1)}  \ar[d] \ar[r] & \AAA^1  \\
                            &  &  \widetilde{Z}  \ar[r]^-{0}  &   \widetilde{\Upsilon} . & }
\end{align*}                            
The proof of Lemma \ref{degeneration for cones} shows the equality
\begin{align*}
(\iota_1)_*0^!_{\widetilde{\Up}|_{t=1}}([\bC_{\widetilde{Z}/(\PP_{j_A}\times\AAA^1)}|_{t=1}])=
0^!_{\widetilde{\Up}|_{t=0}}([\bC_{\widetilde{Z}/(\PP_{j_A}\times\AAA^1)}|_{t=0}])
\end{align*}
in the Chow group of $ \PP_{j_A}|_{D_{X, A}} $. By \eqref{cone at 0}, \eqref{cone at 1},  \eqref{bundle at 1},
\eqref{bundle at 0}, the Excess Intersection Formula (\cite[Theorem 6.3]{Fu}), 
the compatibility of Gysin maps with flat pull-back,
and Corollary \ref{concrete virtual class}, this can be rewritten as
\begin{align}\label{reduction}
(\iota_1)_*0^!_{\Up^0_{j_A}}([\widetilde{\bC_{j_A}}])=\mathbf{e} (\Up^0_{j_A,ex})\cap \pi_\PP^*\mpr_A^*
[Q^{+}_{g,k+A}(X,d_0)]^{\mathrm{vir}},
\end{align}
where $\mathbf{e}$ denotes the Euler class and $\pi_\PP^*$, $\mpr_A^*$ are the flat pull-backs.

{\it Step 4: Final calculation.} Recall the diagram from \eqref{Main Diagram}
\begin{align*}
\xymatrixcolsep{4pc}
\xymatrix{  & \PP_{j_A}|_{D_{X, A}}  \ar[d]^{\pi_\PP}  &     \\
Z_{j_A}\ar[r]^-{\pi_{\PP}}\ar[ur]^-{\iota_1} & \ar[d]_{\pr _{A}} D_{X, A}\ar[r]^-{\mpr  _{A} } 
 &   \ar[d]^-{(({ev}_a,h_a^+))_{a\in A}} Q^+_{g, k+A}(X,d_0)  
    \\
 & Q_{tail, A}^+
 \ar[r]_-{\prod  (\hat{ev}_a\times\id)}  & (X\times\PP(\CC^N)) ^A      } 
 \end{align*}
and that we want to compute \eqref{main correction}.
From \eqref{reduction} this is the same as computing
\begin{align}\label{main correction 2}
\sum_{j_A}m_{j_A}(\mpr_A)_*(\pi_{\PP})_*\left ( \mathbf{e} (\Up^0_{j_A,ex})\cap \pi_\PP^*\mpr_A^*
[Q^{+}_{g,k+A}(X,d_0)]^{\mathrm{vir}}\right ).
\end{align}
By \eqref{excess 3}, 
$$\mathbf{e} (\Up^0_{j_A,ex})= \mathbf{e}(\boxplus_{a\in A}(\cO_{\PP_{j_a}}(1)\ot \pi_{\PP}^* {\tF}_{small}^{j_a})) \mathbf{e}(\pi_{\PP}^*(\FF^{+,j_A}_{tail, d_a, small}))\mathbf{e}((\pi_\PP)^*( \FF_{d_0, small}^{-,j_A} )).
$$

Set $\alpha:= \mathbf{e}(\FF^{+,j_A}_{tail, d_a, small})\mathbf{e}(\FF_{d_0, small}^{-,j_A} )\cap 
\mpr_A^*[Q^{+}_{g,k+A}(X,d_0)]^{\mathrm{vir}}$. Then \eqref{main correction 2} can be successively rewritten as
\begin{align*}
&\sum_{j_A}m_{j_A}(\mpr_A)_*\left \{ (\pi_\PP)_*\left ( \mathbf{e}(\boxplus_{a\in A}(\cO_{\PP_{j_a}}(1)\ot \pi_{\PP}^* {\tF}_{small}^{j_a}) ) \cap \pi_\PP^*\alpha \right)\right\}\\
&=\sum_{j_A}m_{j_A}(\mpr_A)_*\prod_{a\in A} (\pi_{\PP})_*\left(\sum_{m=0}^{\rk({\tF}_{small}^{j_a})}
c_1(\cO_{\PP_{j_a}}(1))^m\cap \pi_{\PP}^* \left(c_{\rk({\tF}_{small}^{j_a})-m}({\tF}_{small}^{j_a})\cap\alpha\right)\right)\\
&=\sum_{j_A}m_{j_A}(\mpr_A)_*\prod_{a\in A} \left(\sum_{m=0}^{\rk({\tF}_{small}^{j_a})}
s_{m-1}\left(\pr_A^*\cO(j_a\psi_a^{tail})\oplus\mpr_A^*\cO(-j_a\psi_a)\right)c_{\rk({\tF}_{small}^{j_a})-m}({\tF}_{small}^{j_a})\cap\alpha \right),
\end{align*}
where $s_{m-1}$ denote the Segre classes.

The Chow cohomology class
\begin{equation*}
\sum_{m=0}^{\rk({\tF}_{small}^{j_a})}
s_{m-1}\left(\pr_A^*\cO(j_a\psi_a^{tail})\oplus\mpr_A^*\cO(-j_a\psi_a)\right)c_{\rk({\tF}_{small}^{j_a})-m}({\tF}_{small}^{j_a})
\end{equation*}
is a polynomial in $\mpr_A^* \psi_a$, of the form
\begin{equation*}
\sum_{b} \pr_A^*((\hat{ev}_a\times\id)^*\delta_b(z)|_{z=\psi_a^{tail}}) \mpr_A^* \psi_a^b,
\end{equation*}
where the $\delta_b$'s are themselves polynomials with coefficients given by 
universal expressions in Chern classes of various tautological bundles $\cO_X(l)$ on $X$, and $\cO_{ \PP(\CC^N)}(m)$ and $Q$ on 
$\PP(\CC^N)$. 
Further, by \eqref{F plus}, \eqref{F minus}, the Euler classes $\mathbf{e}(\FF^{+,j_A}_{tail, d_a, small})$ and $\mathbf{e}(\FF_{d_0, small}^{-,j_A} )$ appearing in 
$\alpha$ are given respectively by the universal expressions $\prod_a\pr_A^*(\hat{ev}_a\times\id)^*f^{+,j_a}_{d_a}(\psi_a^{tail})$ and 
$\prod_a\mpr_A^*(ev_a,h_a^+)^* f_{d_a}^{-,j_a}(-\psi_a)$.

Setting 
$$\gamma_b:= (\hat{ev}_a\times\id)^*(\delta_bf^{+,j_a}_{d_a})(\psi_a^{tail})\in A^*(Q^+_{tail,a})_{\QQ}
$$
and
recalling that $m_{j_A}=\prod_{a\in A} j_a$, we conclude that \eqref{main correction 2} 
has the form
\begin{align}\label{almost there}
\prod_{a\in A}\left( \sum_{j_a=1}^{\max_i\{l_id_i\}} j_a (\mpr_A)_*\left\{ \sum_b\pr_A^*(\gamma_b)
\mpr_A^*(\psi_a^b(ev_a,h_a^+)^* f_{d_a}^{-,j_a}(-\psi_a))\right\} \left([Q^{+}_{g,k+A}(X,d_0)]^{\mathrm{vir}}
\right)
\right).
\end{align}
Here $(\mpr_A)_*:A^*(D_{X, A})_\QQ\lra A^*(Q^+_{g, k + A}(X, d_0 ))_\QQ$ denotes the Gysin map induced by the
bivariant class $[\mpr_A]$ corresponding to the canonical orientation of the flat proper morphism $\mpr_A$,
see equation $(\mathrm{G}_2)$ in \cite[\S 17.4]{Fu}. Applying
\cite[Example 17.4.1(b)]{Fu} to the cartesian square \eqref{cartesian diag} and using the projection formula for bivariant classes, equation \eqref{almost there} proves Theorem \ref{Degeneration2}, with
\begin{align}\label{explicit mu}
\mu_{d_a}^N(z):= \sum_{j_a=1}^{\max_i\{l_id_i\}} j_a \sum_b  (-z)^b
f_{d_a}^{-,j_a}(z)(\hat{ev}_a\times\id)_*(\gamma_b)\in A^*(X\times\PP(\CC^N))_\QQ[z].
\end{align}
\end{proof}
We stress again that our argument shows that the formula \eqref{explicit mu} for the correcting class $\mu_d^N$
is universal in the following sense: it depends on $g$ and $k$ only through the dependence on $N$ of
the polynomials $f_{d_a}^{+,j_a}(z), f_{d_a}^{-,j_a}(z), \delta_b(z)\in A^*(X\times\PP(\CC^N))_\QQ[z]$.
This will be used in the next subsection.

\subsection{Identification of the correcting class}\label{identification}
In this subsection we finish the proof of Theorem \ref{Main} (for $(g,k)\neq (1,0)$) by showing that the class
\eqref{explicit mu} satisfies
\begin{align}\label{first identification}
\mu_{d_a}^N(z)={\mathrm{ coefficient \; of }}\; q^{d_a}\; {\mathrm {in\;}} z(J^{\ke_-}_{sm}(z)-J^{\ke_+}_{sm}(z))\ot \one_{\PP(\CC^N)}.
\end{align}
Indeed, assuming \eqref{first identification}, it follows first that the coefficient of $q^{d_a}$ in
$z(J^{\ke_-}_{sm}(z)-J^{\ke_+}_{sm}(z))$
is a polynomial in $z$ (because the left-hand side is such) and then by the general asymptotic properties of the small
$J^\ke$-functions it coincides with the coefficient of $q^{d_a}$ in $[zI_{sm}(q,z)-z]_+$.
Second, \eqref{first identification} also shows that
the class
$(ev_a,h_a^+)^*\mu_{d_a}^N(z)$ is independent of $N$, so that we may replace it by $ev_a^*\mu_{d_a}(z)$ in the formula \eqref{weak formula}.
Hence Theorem
\ref{Degeneration2} together with \eqref{first identification} imply Theorem \ref{Main}.

To prove \eqref{first identification}, we take $d=d_a$ (so that $d_0=0$) and consider the graph spaces 
$QG_{0, 0, d_a}^\pm(X)$. These are the moduli stacks of $\ke_{\pm}$-stable quasimaps of degree $d_a$ to $X$, whose domains are genus zero unpointed curves with a component which is a parametrized 
$\PP^1$, see \cite{CKM, CKg0}. Similarly, we have the moduli stacks $QG_{0, 0, d_a}^\pm(\PP(V))$ and
$QG_{0, 0, d_a}^\pm(\PP(V\ot\CC^N))$, which are smooth. The $\ke_-$-stability condition implies that the domain curve must be an irreducible parametrized $\PP^1$, 
while $\ke_+$-stability allows in addition quasimaps with domain consisting of one rational tail and the parametrized $\PP^1$. These quasimaps have degree $d_a$ on the rational tail and are constant maps on the parametrized $\PP^1$. In particular, there are identifications
$$QG_{0, 0, d_a}^-(\PP(V) ) \cong \PP(\Sym^{d_a}(\CC^2)\ot V),$$
$$QG_{0, 0, d_a}^-(\PP(V\ot\CC^N))\cong \PP(\Sym^{d_a}(\CC^2)\ot V\ot \CC^N).$$

Recall that we have the embeddings
$$X\times\PP(\CC^N)\hookrightarrow \PP(V)\times\PP(\CC^N)\hookrightarrow\PP(V\ot\CC^N)\times
\PP(\CC^N),$$
whose composition is the map $i_{Seg}$ from \eqref{Segre}. The induced embeddings of graph spaces commute with the contraction maps:
\begin{equation}\label{obvious embeddings}
\xymatrix{ QG^+_{0, 0, d_a}(X)\times\PP(\CC^N) \ar[d]_{c\times\id}  \ar@{^{(}->}[r]   & 
QG^+_{0, 0, d_a}(\PP(V\ot\CC^N))\times\PP(\CC^N)\ar[d]^{c\times\id}\\
QG^-_{0, 0, d_a}(X)\times\PP(\CC^N)\ar@{^{(}->}[r]  & QG^-_{0, 0, d_a}(\PP(V\ot\CC^N))\times\PP(\CC^N).}
\end{equation}
The right contraction map $c\ti \id$ is an isomorphism outside the boundary divisor
\begin{align*}
  D_{a} & \cong (Q^+_{0,\{a\} }(\PP(V\ot\CC^N),d_a)\times\PP(\CC^N))\times_{\PP(V\ot\CC^N)\times\PP(\CC^N)} (QG_{0, \{a\},0}^+(\PP(V\ot\CC^N))\times\PP(\CC^N))\\
 & \cong (Q^+_{0,\{a\} }(\PP(V\ot\CC^N),d_a)\times\PP(\CC^N))\times_{\PP(V\ot\CC^N)\times\PP(\CC^N)} 
 (\PP(V\ot\CC^N)\times\PP^1 \times\PP(\CC^N)),
\end{align*}
where $QG_{0, \{a\},0}^+(\PP(V\ot\CC^N)) \cong \PP(V\ot\CC^N)\times\PP^1$ is the moduli stack of 
$\ke_{+}$-stable quasimaps of degree $0$ to $\PP(V\ot\CC^N)$, whose domains are genus zero {\em one-pointed} curves with a component which is a parametrized 
$\PP^1$, see \cite{CKM, CKg0}.
Let $\sL_\pm$ denote the universal line bundles of degree $d_a$ on the fibers of the universal curves
over the various $QG^\pm\times\PP(\CC^N)$. 
Let $\sM$ denote the pull-back of $\cO_{\PP(\CC^N)}(1)$ to $QG^\pm\times\PP(\CC^N)$, with the basis $\{t_1,\dots,t_N\}$ of global sections, and set $\sL'_\pm=\sL_\pm\ot\sM$.
With these notations (which are justified, since the line bundles are compatible with the above embeddings), the construction of \S\ref{POT} 
produces the obstruction theory \eqref{perfect} of $QG_{0, 0, d_a}^\pm(X)\times\PP(\CC^N)$ relative to the smooth, pure dimensional stack $\BunG^{\PP^1}\times\PP(\CC^N)$. 
Here $\BunG^{\PP^1}\lra\widetilde{\PP^1[0]}$ is the relative Picard stack over the Fulton-MacPherson stack $\widetilde{\PP^1[0]}$ of unpointed rational curves with one parametrized component. The corresponding virtual class is $[QG_{0, 0, d_a}^\pm(X)]^{\mathrm{vir}}\times[\PP(\CC^N)]$. 
Note that for all universal curves, the map $h$ to $\PP(\CC^N)$ is just the projection.

Further, if we put 
$$U^\pm:= QG_{0, 0, d_a}^\pm(\PP(V\ot\CC^N))\times\PP(\CC^N),$$
then the construction of \S\ref{U-embedding} also applies to produce the vector bundles $F^\pm$ on $U^\pm$,
with sections $\sigma^\pm$ such that $(\sigma^\pm)^{-1}(0)\cong QG^+_{0, 0, d_a}(X)\times\PP(\CC^N)$. This embedding of $QG^+_{0, 0, d_a}(X)\times\PP(\CC^N)$ in $U^\pm$ is precisely the one in \eqref{obvious embeddings}. The diagram \eqref{embedded obstruction} holds as well, hence we have the concrete
description 
$$[QG_{0, 0, d_a}^\pm(X)]^{\mathrm{vir}}\times[\PP(\CC^N)]=0^!_{E^\pm}(\bC _{QG_{0, 0, d_a}^\pm(X)\times\PP(\CC^N)/U^\pm})
$$
as in Corollary \ref{concrete virtual class}. 

From the degeneration analysis in \S \ref{sec:boundary} -- \S \ref{correcting class},
it follows that Theorem \ref{Degeneration2} holds in the situation considered in this section, giving the equality
\begin{align}\label{full}
&[QG_{0, 0, d_a}^-(X)]^{\mathrm{vir}}\times[\PP(\CC^N)]-(c\times\id)_*([QG_{0, 0, d_a}^+(X)]^{\mathrm{vir}}\times[\PP(\CC^N)])=\\
\nonumber &
(b_{a}\times\id)_*((ev_a,h_a^+)^*\mu_{d_a}^N(-\psi_a)\cap ([QG_{0, \{a\},0}^+(X)]^{\mathrm{vir}}\times[\PP(\CC^N)]),
\end{align}
with $\mu_{d_a}^N$ the universal class in \eqref{explicit mu}.
Notice that the one-pointed, {\it degree zero} graph space is identified with $X\times\PP^1$, with virtual class the usual fundamental class (for any stability parameter $\ke$), while the maps 
$$ev_a :X\times\PP^1\times\PP(\CC^N)\lra X,\;\;\; h_a^+ : X\times\PP^1\times\PP(\CC^N)\lra \PP(\CC^N)$$
are respectively the first and third projections. The class $\psi_a$ is the pull-back of $c_1(\omega_{\PP^1})$ via
the second projection.

Now recall that graph spaces carry a $\CC^*$-action (induced by the standard action on the parametrized domain component) for which the maps $c$ and $b_a$ are equivariant. 
It is customary to denote by $z$ the equivariant parameter for this action.
In each graph space there is a distinguished part of the $\CC^*$-fixed locus corresponding to quasimaps for which the entire nontrivial data is concentrated over the point $0$ in the parametrized domain component. The restrictions of the maps $c$ and $b_a$ to the fixed point locus respect
the decomposition into distinguished and non-distinguished parts. 
It follows that if we apply the virtual localization
formula of \cite{GP} to \eqref{full} (using the trivial action on the $\PP(\CC^N)$ factors) and discard from both sides the localization residues at all 
non-distinguished fixed-point loci, we still have an equality between the remaining distinguished residues.

In our particular case, the distinguished fixed locus in $QG^-_{0, 0, d_a}(X)\times\PP(\CC^N)$ is identified with $X\times\PP(\CC^N)$, the distinguished fixed locus in 
$QG^+_{0, 0, d_a}(X)\times\PP(\CC^N)$ 
is identified with $Q^+_{0,1}(X,d_a)\times\PP(\CC^N)$, and the distinguished fixed locus in
$QG_{0, \{a\},0}^+(X)\times \PP(\CC^N) = X\times\PP^1\times\PP(\CC^N)$ is $X\times\{0\}\times\PP(\CC^N)$. Moreover, the restriction 
of $c\times\id$ to the distinguished fixed locus is $ev_1\times\id$, while $b_a\times\id$,
$(ev_a, h_a^+)$ are the identity map on the distinguished fixed locus. The equality of distinguished residues of 
\eqref{full} becomes
\begin{align}\label{full_2}
{\mathrm{ coefficient \; of }}\; q^{d_a}\; {\mathrm {in\;}} (J^{\ke_-}_{sm}(z)-J^{\ke_+}_{sm}(z))\ot \one_{\PP(\CC^N)}=\frac{\mu_{d_a}^N(z)}{z}
\end{align}
in $A^*(X \ti \PP(\CC ^N))_{\QQ}[z, z^{-1}]$,
proving \eqref{first identification}. Indeed, the left-hand side is as stated by the very definition of the
small $J$-functions in (5.1.1) of \cite{CKg0}, while for the right-hand side we used that, in the right-hand side of \eqref{full},
$\psi_a|_{X\times\{0\}\times\PP(\CC^N)}=-z$, and that the equivariant normal bundle of $\{0\}\subset\PP^1$ has first Chern class $z$, i.e., the denominator $z$
in the right-hand side of \eqref{full_2} so that 
$\frac{1}{z}$ is the distinguished residue of $[QG^+_{0, \{ a \}, 0} (X)]^{\vir} \ti [\PP(\CC ^N)]$.

\subsection{The unpointed genus \texorpdfstring{$1$}{Lg} case}\label{elliptic}
Since $\overline{M}_{1,0}$ is empty, we do not have the twisting line bundles 
$\sM$ satisfying Lemma \ref{easy lemma} and which are all compatible.
However, it turns out that an appropriate modification of
the set-up in \S\ref{virtual classes} allows for an application of the arguments in \S\ref{proof of Main Thm} to establish Theorem \ref{Main} in this case as well. 

\subsubsection{Set-up}
By an unpointed semistable genus $1$ curve we mean an unpointed prestable genus $1$ curve with no rational tails. Let $\fM_{1,0}^{ss}$ denote the moduli stack  of semistable genus $1$ curves.

Fix positive integers $d$ and $e$. Let $M_N$ denote the moduli stack of degree $e$ unpointed genus $1$ {\em stable maps} to $\PP (\CC ^N)$ 
with {\em semistable} domain curves. Since all line bundles of degree $e$ on semistable genus $1$ curves are non-special, $M_N$ is a smooth (non-proper) Deligne-Mumford stack. Denote by $\fC _{1,0}^{ss}\lra M_N$ the universal curve and by $$h:\fC _{1,0}^{ss}\lra\PP(\CC^N)$$ the universal map.

Let $d' = d + e$ and
let $Q_{1,0}^{\ke, unob}(\PP (V\ot \CC^N), d')$ be the open substack of $Q_{1,0}^{\ke}(\PP(V\ot \CC^N), d')$ consisting of $\ke$-stable quasimaps
$(C, L', u')$ with vanishing $H^1(C, L')$.
Define $U_{d'}^{\ke, N}$ as the fiber product $$Q_{1,0}^{\ke, unob}(\PP (V\ot \CC^N), d') \times _{\fM _{1,0}^{ss}} M_N . $$
Here the morphism $Q_{1,0}^{\ke, unob}(\PP (V\ot \CC^N), d') \ra \fM _{1,0}^{ss}$ is the composite 
of the contraction map $Q_{1,0}^{\ke, unob}(\PP  (V\ot \CC^N),  d')\ra Q_{1,0}^{0+}(\PP (V\ot \CC^N), d')$ and the forgetful map 
$Q_{1,0}^{0+}(\PP (V\ot \CC^N), d')\ra \fM_{1,0}^{ss}$.

 Since $M_N$ is smooth over $\fM_{1,0}^{ss}$ and $Q_{1,0}^{\ke, unob}(\PP  (V\ot \CC^N), d')$ is smooth over $\fBun ^{1,0}$,
 the stack  $U_{d'}^{\ke, N}$ is smooth over $\fBun ^{1,0}$.

 The universal curve $\fC ^{\ke}_{1, 0, d'}$ over $U_{d'}^{\ke, N}$ has a semistabilization morphism $ss_{\ke}: \fC ^{\ke}_{1, 0, d'} \ra \fC _{1,0}^{ss}$ (the contraction of rational tails of universal curves), fitting into the commuting diagram 
 \begin{equation*}
 \xymatrix{\fC ^{\ke}_{1, 0, d'}\ar[d]^{\pi}\ar[r]_{ss_{\ke}} &  \fC ^{ss}_{1,0} \ar[r]_{h} \ar[d] & \PP(\CC^N) \\
 U^{\ke, N}_{d'} \ar[r]_{\text{proj}}  & M_N .&
 }
 \end{equation*}
 We set $h_\ke=h\circ ss_{\ke} : \fC ^{\ke}_{1, 0, d'} \ra \PP(\CC^N)$ and $\sM_\ke=h_\ke^*\cO_{\PP(\CC^N)}(1)$. Further, the sections $t_j$ of $\cO_{\PP(\CC^N)}(1)$ associated to the homogeneous coordinates of $\PP (\CC ^N)$ give the
sections $s_j:=h_\ke^* t_j\in H^0(\fC ^{\ke}_{1, 0, d'},\sM_\ke)$, $j=1, ..., N$.

\subsubsection{Obstruction theory for  $Q^{\ke}_{1,0}(X, d) \times_{\fM _{1,0}^{ss}}M_N $ relative to $\fBun ^{1,0}$}

 Denote by $\sL ' _{\ke}$ the universal line bundle on the universal curve $\fC  ^{\ke}_{1,0, d'}$ of $U_{d'}^{\ke, N}$ and put $\sL_{\ke} := \sL_{\ke} ' \ot \sM_\ke^{-1}$.
 
 Consider the diagram of vector bundles and $\cO _{\fC  ^{\ke}_{1,0, d'}}$-linear maps, corresponding to \eqref{twisting},
 \begin{equation*} \xymatrix{ 0 \ar[r]   &  \mathscr{L}_{\ke} \ot V   \oplus h_{\ke}^*T_{\PP (\CC ^N)}  \ar[r]^{(\oplus _j s_j, \mathrm{id} ) \;\;\;\;}   
 &  \oplus _{j=1}^N  \mathscr{L}'_{\ke} \ot V  \oplus h_{\ke}^*T_{\PP (\CC ^N)}   \ar[r]\ar[d]^{(\oplus_j(\oplus_id\varphi_i), 0)}        & \mathscr{P}^{\ke}_{ d'}  \ar[r]                 & 0 \\
                      0 \ar[r]  & \oplus _{i=1}^r  \sL_{\ke}  ^{l_i}  \ar[r]^{\oplus _{i, j} s_j^{l_i}}       
& \oplus _{i, j}   (\sL '_{\ke}) ^{l_i} \ar[r]                              & \mathscr{Q}^{\ke}_{ d'}  \ar[r]              & 0 .} \end{equation*}

  Let $Q^{\ke}_X:=Q^{\ke}_{1,0}(X, d) $.
 As before, there is a vector bundle $$P^{\ke}_{d'} \oplus R^{\ke}_{d'} := \pi _*\sP^{\ke}_{d'} \oplus \pi _* (\oplus _{i, j}   (\sL '_{\ke}) ^{l_i})$$
 on $U_{d'}^{\ke, N}$, with a section $\sigma ^{\ke}$ whose zero locus
is naturally isomorphic to the product stack $$Q^{\ke}_{X} \times _{\fM _{1,0}^{ss}} M_N . $$

On the universal curve $\fC ^{\ke}_{X}$ over $Q^{\ke}_X \times _{\fM ^{ss}_{1,0}} M_N$ (associated to the universal curve of $Q^{\ke}_X$), 
we may complete the diagram above to a homomorphism of short exact sequences.  In particular, we obtain 
a natural homomorphism  $$\mathscr{L}_{\ke} \ot V  \oplus h_{\ke}^*T_{\PP(\CC^N)} \ra   \oplus _{i=1}^r  \sL_{\ke}  ^{l_i}  $$
and an exact sequence $$0 \ra \mathscr{E}^{\ke}_{ d} \ra  \sP^{\ke}_{d'} \oplus   (\oplus _{i, j}   (\sL '_{\ke}) ^{l_i}  )   \ra   \sQ^{\ke}_{ d'}  \ra 0,$$
defining a vector bundle $\mathscr{E}^{\ke}_{ d}$ on $\fC^{\ke}_{X}$, with $\pi_*\mathscr{E}^{\ke}_{ d} $ also locally-free.
 
        Denote by $\mathbf{C}_{\sigma ^{\ke}}$ the normal cone to
$Q^{\ke}_{X} \times _{\fM _{1,0}^{ss}} M_N$ in  $    U_{d'}^{\ke, N}   $. As before, $\mathbf{C}_{\sigma ^{\ke}}$ 
is a closed subcone of the vector bundle $\pi_*\mathscr{E}^{\ke}_{ d} $, with the embedding induced by a surjection $\pi_*\mathscr{E}^{\ke}_{ d}\twoheadrightarrow 
\mathscr{I}/\mathscr{I}^2$, where $\mathscr{I}$ is the ideal sheaf of the closed substack $Q_X^{\ke} \times_{\fM _{1,0}^{ss}}M_N$.

 Consider the following commuting diagram
  \[ \xymatrix{ Q_X^{\ke} \times_{\fM _{1,0}^{ss}}M_N  \ar@{^{(}->}[r]_{\text{closed}} &  U_{d'}^{\ke, N} \ar[r]\ar[d]  & M_N \ar[d]^{\text{smooth}}  \\
                          & Q_{1,0}^{\ke, unob}(\PP(V\ot \CC^N), d') \ar[r] \ar[d]^{\text{smooth}} & \fM_{1,0}^{ss} \\
                          & \fBun ^{1,0}  &  } \]
and {\it define} a perfect obstruction theory $\mathbb{E}$ for $Q_X^{\ke} \times_{\fM _{1,0}^{ss}}M_N$ relative to $\fBun ^{1,0}$
by \begin{align*} &
 \left[ R^{\bullet}   \pi _* (    \mathscr{L}_{\ke}  \ot V  \oplus h_{\ke}^*T_{\PP (\CC ^N)}  \ra   \oplus _{i=1}^r  \sL_{\ke}  ^{l_i}   )\right]^{\vee} \\
 \stackrel{\text{qiso}}{\sim} &  
 \left[ (\pi _* \mathscr{E}^{\ke}_{ d} ) ^{\vee} \ra  ( \oplus _{j=1}^N  \pi_*    \mathscr{L}'_{\ke} \ot V  \oplus \pi_* h_{\ke}^*T_{\PP (\CC ^N)}  )^{\vee} \right] = : \mathbb{E} \\
 & \quad \downarrow    \quad \quad \quad   \quad   \quad \quad   \downarrow { \cong}   \\
& \left[  \mathscr{I}/\mathscr{I}^2  \ra \Omega_{U^{\ke, N}_{ d'} /\fBun ^{1,0}} |_{Q^{\ke}_{X} \times _{\fM _{1,0}^{ss}} M_N}  \right] .
\end{align*} 
     The associated virtual class is, by definition,
     \[ [  Q_X^{\ke} \times_{\fM _{1,0}^{ss}}M_N    ]^{\vir} := 0^!_{  \pi _* \sW _{\ke, d}  }[\mathbf{C}_{\sigma ^{\ke}}] . \]

\subsubsection{Wall-crossing}
       
       We will compare the virtual classes $[  Q^{\pm}_{X} \times _{\fM _{1,0}^{ss}} M_N]^{\vir}  $ under the contraction map $c: Q^{+}_{X} \times _{\fM _{1,0}^{ss}} M_N \ra 
       Q^{-}_{X} \times _{\fM _{1,0}^{ss}} M_N$, where the contraction map does not do anything on the $M_N$ factor.
       
       The comparison can be carried out as before. Similar to \eqref{univ curves}, there is a commuting diagram 
 \begin{equation*}
\xymatrix{\fC ^{+}_{1, 0, d'} \ar[rd]_{\pi} \ar[r]\ar@/^1pc/[rr]^{\tilde{c}}  \ar@/^3pc/[rrr]_{ss_{+}} \ar@/^4pc/[rrrr]^{h_{+}} 
& c^*\fC ^{-}_{1, 0, d'}\ar[d]\ar[r]
&\fC ^{-}_{1, 0, d'}\ar[d]^{\pi} \ar@/^1pc/[rr]^{h_{-}} \ar[r]_{ss_{-}} &  \fC ^{ss}_{1,0} \ar[r]_{h} \ar[d] & \PP(\CC^N) \\
& U^{+, N}_{d'}  \ar[r]_{c}    & U^{-, N}_{d'} \ar[r]_{\text{proj}}  & M_N .& 
}
\end{equation*}

       First use the homomorphism $\Phi  :  P^{+}_{d'} \oplus R ^{+}_{d'} \ra c^*P^{-}_{d'} \oplus  c^*R ^{-}_{d'} $ 
       induced from the contraction map to perform the MacPherson graph construction.
       Second, deform the obstruction normal cone of $c^{-1} (Q^-_X\times _{\fM ^{ss}_{1,0}} M_N)$ in $U^{+, N}_{ d'} $
       using the induced section of the universal quotient bundle of $\mathbf{Gr}(P^{+}_{d'} \oplus R ^{+}_{d'} \oplus  c^*P^{-}_{d'} \oplus c^*R ^{-}_{d'} )$.
       
     Repeating word for word the arguments of \S3.3-3.6, we obtain the following analogue of Theorem \ref{Degeneration2}.
      Let $z$ be a formal variable. Let the Chow cohomology class $\mu_{d_a}^N(z)\in A^*(X\times\PP(\CC^N))_\QQ[z]$ be given
      by the universal formula \eqref{explicit mu}. The equality 
       \begin{align}\label{degen2_(1,0)}
&[Q^{-}_{1, 0} (X, d) \times _{\fM _{1,0}^{ss}} M_N]^{\mathrm{vir}} - c_*[Q^{+}_{1, 0}(X, d) \times _{\fM _{1,0}^{ss}} M_N]^{\mathrm{vir}} =\\
& \nonumber\sum_A \frac{1}{|A|!}
(b_A)_* (c_A)_* \left(\prod_{a\in A} (ev_a, h_a^+)^*\mu_{d_a}^N(z)|_{z=-\psi_a} \cap [Q^{+}_{1,A}(X,d_0^A)  \times _{\fM _{1,0}^{ss}} M_N ]^{\mathrm{vir}}\right )
\end{align}
holds in the Chow group $A_*(Q^{-}_{1,0}(X, d) \times _{\fM _{1,0}^{ss}} M_N)_\QQ$,
where 
\begin{itemize}
\item $c_A$ is the contraction map 
$$Q^{+}_{1,A}(X,d_0^A)  \times _{\fM _{1,0}^{ss}} M_N\ra Q^{-}_{1,A}(X,d_0^A)  \times _{\fM _{1,0}^{ss}} M_N,$$
\item
$b_A$ is the morphism $$Q^{-}_{1,A}(X,d_0^A)  \times _{\fM _{1,0}^{ss}} M_N \ra Q^{-}_{1,0}(X,d)  \times _{\fM _{1,0}^{ss}} M_N$$ 
which trades the markings $A$ for base 
points of length $d_a$, 
\item
the morphism $h_a^{+}:      Q^{+}_{1,A}(X,d_0^A)  \times _{\fM _{1,0}^{ss}} M_N   \ra  \PP (\CC ^N)$ 
is the composite of the contraction $$Q^{+}_{1,A}(X,d_0^A)\times _{\fM _{1,0}^{ss}} M_N\ra Q^{-}_{1,A}(X,d_0^A)\times _{\fM _{1,0}^{ss}} M_N,$$
the marking section $$\Sigma _a: Q^{-}_{1,A}(X,d_0^A)  \times _{\fM _{1,0}^{ss}} M_N \ra \fC ^{-}_{A, X}$$ of the universal
curve over $Q^{-}_{1,A}(X,d_0^A)  \times _{\fM _{1,0}^{ss}} M_N$ (associated to the universal curve of $Q^{-}_{1,A}(X,d_0^A)$), 
the morphism $$ \fC ^{-}_{A, X} \ra \fC ^{-}_{X}$$ induced from $b_A$, and finally $h_{-}|_{\fC ^{-}_X} : \fC^{-}_X \ra \PP (\CC ^N)$.
\end{itemize}

 \subsubsection{Relation between $[Q_X^{\ke} \times_{\fM _{1,0}^{ss}}M_N ]^{\vir}$ and $[Q_X^{\ke}]^{\vir}$}

By a result of Cooper, \cite{Cooper}, the stack $Q^{0+}_{1,0} (\PP (V), d)$ has projective coarse moduli
and hence there is a morphism from the universal curve of $Q^{0+}_{1, 0} (\PP(V) , d)$ to
$\PP (\CC ^N)$ for some $N$ such that the morphism does not contract any irreducible component of any fiber of the universal curve. 
Fix such a morphism $\phi$ and let $e$ be the degree of a fiber curve under $\phi$. The degree
$e$ is independent of the choice of fiber since $Q^{0+}_{1,0} (\PP(V), d)$  is connected.  (In fact, $Q^{0+}_{1,0} (\PP(V), d)$ is irreducible; this follows from
the connectedness of $\overline{M}_{1,0}(\PP (V), d)$ (see \cite{KP}), the surjectivity of the contraction map $\overline{M}_{1,0}(\PP (V), d) \ra Q^{0+}_{1,0} (\PP (V), d)$, and the
smoothness of $Q^{0+}_{1,0}(\PP (V), d)$ (see \cite{MOP}).) From now on {\it we work with the stack $M_N$ corresponding to these particular choices of 
$N$ and $e$}.

By the universal property of $M_N$, upon restricting $\phi$ to the universal curve over $Q^{0+}_X $,
we obtain a morphism $\underline{h}_{1,0}: Q^{0+}_X\ra M_N $ fitting in the diagram with the cartesian square
\[ \xymatrix{ \fC _{1, 0}^{0+} \ar[rr] \ar[d] \ar@/^1pc/[rrr]^{\phi}& & \fC_{1,0}^{ss} \ar[d] \ar[r]_{h} & \PP(\CC ^N)  \\
Q^{0+}_X \ar@/_1pc/[rr]_{\underline{h}_{1,0}  }  \ar[r] & Q^{0+}_{1,0} (\PP(V), d) \ar[r]&   M_N . &  }  \]
We also let $$\underline{h}^{\ke}_{1,0} : Q^{\ke}_X \ra Q^{0+}_X \stackrel{\underline{h}_{1,0}}{\longrightarrow}  M_N $$ denote 
the composition of $\underline{h}_{1,0} $ and the contraction
$Q^{\ke}_X \ra Q^{0+}_X$.

 One checks directly that there is a natural cartesian square
 \[    \xymatrix{ Q^{\ke}_{X}   \ar[r]^{(\mathrm{id},  \underline{h}^{\ke}_{1,0} ) \ \ \  \ \  } \ar[d]_{\underline{h}^{\ke}_{1,0} }  
        &  Q^{\ke}_{X} \ar[d]  \times _{\fM _{1,0}^{ss}} M_N
                \ar[d]^{(\underline{h}^{\ke}_{1,0} , \mathrm{id})} \\               
       M_N  \ar[r]^{\Delta \ \ \ \  \ }    &           M_N \times _{\fM _{1,0}^{ss}} M_N. } \]

  In the derived category of coherent sheaves on $Q^{\ke}_X$ there is a commuting diagram 
  \[ \xymatrix{  (\underline{h}_{1,0}^{\ke})^*  (L_{\Delta}[-1]   \cong (\pi_* h^*T_{\PP (\CC ^N)})^{\vee} )  \ar[r] \ar[d] 
  &   (\mathrm{id},  \underline{h}^{\ke}_{1,0} )^* \mathbb{E} \ar[d]   \\
                         \mathbb{L}_{   Q^{\ke}_{X}/  Q^{\ke}_X\times _{\fM ^{ss}_{1,0}} M_N   } [-1] \ar[r]
                          &  (\mathrm{id},  \underline{h}^{\ke}_{1,0} )^* \mathbb{L}_{Q^{\ke}_X\times _{\fM ^{ss}_{1,0}} M_N / \fBun ^{1,0}} 
                         } \]
      whose mapping cone  is the obstruction theory for  $Q^{\ke}_{X} $ relative to  $\fBun ^{1,0}$, as in \S \ref{POT}. 
      The functoriality result of \cite[Proposition 5.10]{BF} implies the relation
       \begin{align}\label{Delta^!} \Delta ^{!} [  Q^{\ke}_{X} \times _{\fM _{1,0}^{ss}} M_N]^{\vir}  = [ Q^{\ke}_X]^{\vir} . \end{align}

Now apply $\Delta ^{!}$ to \eqref{degen2_(1,0)}. Using
the compatibility of the Gysin homomorphism for proper push-forward, the commutativity of Chern
classes with Gysin homomorphism, the relation \eqref{Delta^!}, and the identification of $\mu_{d_a}^N(z)$ from \S\ref{identification},
we conclude the proof of Theorem  \ref{Main} in the remaining case $(g,k)=(1,0)$.


\begin{thebibliography}{99}

\bibitem{BF} K. Behrend and B. Fantechi, {\em The intrinsic normal cone}, Invent. Math.  {\bf 128} (1997), 45--88.

\bibitem{BCOV} M. Bershadsky, S. Cecotti, H. Ooguri, and C. Vafa,
{\em Holomorphic anomalies in topological
field theories}, Nucl. Phys. B {\bf 405} (1993), 279--304.

\bibitem{Be} A. Bertram, {\em Another way to enumerate rational curves with torus actions}, Invent. Math.  {\bf 142} (2000), 487--512.

\bibitem{BCK1} A. Bertram, I. Ciocan-Fontanine, and B. Kim,
{\it Two proofs of a conjecture of Hori and Vafa,} Duke Math. J. {\bf 126} (2005),
no. 1, 101--136.

\bibitem{BCK2} A. Bertram, I. Ciocan-Fontanine, and B. Kim,
 {\it Gromov-Witten invariants for nonabelian and
abelian quotients,} J. Algebraic Geom.  {\bf 17}(2) (2008), 275--294.

\bibitem{CK} I. Ciocan-Fontanine and B. Kim, Moduli stacks of stable toric quasimaps,  Adv. in Math. {\bf 225} (2010), 3022--3051.

\bibitem {CKg0}  I. Ciocan-Fontanine and B. Kim,
{\it Wall-crossing in genus zero quasimap theory and mirror maps}, Algebraic Geometry 1({\bf 4}), 2014, 400--448.


\bibitem {CKg}  I. Ciocan-Fontanine and B. Kim, 
{\em Higher genus quasimap wall-crossing for semi-positive targets}, J. Eur. Math. Soc. 19 (2017), 2051--2102.



\bibitem{bigI} I. Ciocan-Fontanine and B. Kim,  {\it Big I-functions},  Advanced Studies in Pure Mathematics {\bf 69}, ``Development of Moduli Theory - Kyoto 2013" (volume in honor of S. Mukai's 60th birthday), 323--347.


\bibitem{CKM} I. Ciocan-Fontanine, B. Kim, and D. Maulik, {\em Stable quasimaps to GIT quotients},  J. Geom. Phys. {\bf 75} (2014), 17--47.

\bibitem{CKS}  I. Ciocan-Fontanine, B. Kim, and C. Sabbah,  {\it The abelian/nonabelian
correspondence and Frobenius manifolds}, Invent. Math.  {\bf 171} (2008), 301--343. 

\bibitem{CKP} I. Ciocan-Fontanine, M. Konvalinka, and I. Pak,  {\it Quantum cohomology of $Hilb_n({\bf C}^2)$ and the weighted hook walk on Young diagrams}, Journal of Algebra, {\bf 349} (2012) 268--283.

\bibitem{Cooper} Y. Cooper, {\em The geometry of stable quotients in genus one}, Math. Ann. (2015) vol 361: 943--979.

\bibitem{faber-p} C. Faber, R. Pandharipande, {\em Hodge integrals and Gromov-Witten theory},  Invent. Math.  {\bf 139} (2000), 173--199.

\bibitem{Fu} W. Fulton, Intersection theory, Ergebnisse der Mathematik und ihrer Grenzgebiete, Springer-Verlag, Berlin, 1984.

\bibitem{getzler-p} E. Getzler and R. Pandharipande, {\em Virasoro constraints and the Chern classes of the Hodge bundle}, Nucl. Phys. 
{\bf B530} (1998), 701--714.

\bibitem{Givental-equiv} A. Givental,  {\em Equivariant Gromov-Witten
invariants,} Internat. Math. Res. Notices (1996), no. 13, 613--663.

\bibitem{Giv} A. Givental, {\em A mirror theorem for toric complete intersections}, in \lq\lq Topological field theory, primitive
forms and related topics (Kyoto, 1996)", Progr. Math., 160, BirkhŠuser Boston, Boston, MA, 1998, 141--175.

\bibitem{GP} T. Graber and R. Pandharipande, {\em Localization of virtual
classes}, Invent. Math. {\bf 135} (1999), 487--518.

\bibitem{Klemm et al} M.-x. Huang, A. Klemm, S. Quackenbush, {\em Topological string theory on compact Calabi-Yau: modularity and boundary conditions}, in ``Homological Mirror Symmetry - New Developments and Perspectives", A. Kapustin, M. Kreuzer, and K.-G. Schelsinger (Eds.), Lecture Notes in Physics {\bf 757} (2009), 45--102.

\bibitem{KL} B. Kim and H. Lho, {\em Mirror theorem for elliptic quasimap invariants}, Geometry \& Topology 22 (2018), 1459--1481.

\bibitem{KP} B. Kim and R. Pandharipande, {\em The Connectedness of the moduli space of maps to homogeneous spaces}, in
``Symplectic Geometry and Mirror Symmetry: Proceedings of the 4th KIAS
Annual International Conference", K. Fukaya, Y.-G. Oh, K. Ono, and
G. Tian (Eds.), 187--201.

\bibitem{Kresch} A. Kresch, {\em Canonical rational equivalence of intersections of divisors}, Invent. Math. {\bf 136} (1999), 483--496.

\bibitem{LT} J. Li and G. Tian, {\em Virtual moduli cycles and Gromov-Witten invariants of algebraic varieties}, J. Amer. Math. Soc. {\bf 11} (1998), 119--174.

\bibitem{MOP} A. Marian, D. Oprea, and R. Pandharipande, {\em The moduli space of stable quotients}, Geometry \& Topology {\bf 15} (2011), 1651--1706.

\bibitem{MM} A. Musta\c{t}\u{a} and A. Musta\c{t}\u{a} {\em Intermediate moduli spaces of stable maps}, Invent. Math.
{\bf 167}(1) (2007), 47--90.

\bibitem{Popa} A. Popa,  {\em The genus one Gromov-Witten invariants of Calabi-Yau complete intersections},
Trans. Amer. Math. Soc. {\bf 365}(3) (2013), 1149--1181.

\bibitem{Toda} Y. Toda,  {\em Moduli spaces of stable quotients and wall-crossing phenomena}, Compositio Math. {\bf 147}, 1479--1518.

\bibitem{Vistoli} A. Vistoli, {\em Intersection theory on algebraic stacks and on their moduli spaces}, Invent. Math. {\bf 97} (1989), 613--670.

\bibitem{Zinger} A. Zinger, {\em The reduced genus 1 Gromov-Witten invariants of Calabi-Yau hypersurfaces},
J. Amer. Math. Soc. {\bf 22} (2009), 691--737.

\end{thebibliography}
\end{document}